\documentclass[12pt, reqno]{amsart}
\usepackage[english]{babel}
\usepackage[utf8]{inputenc}
\usepackage[T1]{fontenc}
\usepackage{amsmath}
\usepackage{amsthm}
\usepackage{amssymb}
\usepackage{graphics}
\usepackage{xcolor}
\usepackage{enumitem}
\usepackage[pagebackref, colorlinks = true, linkcolor = teal, urlcolor  = teal, citecolor = violet]{hyperref}
\usepackage[margin=1in]{geometry}
\usepackage{cleveref}
\crefname{equation}{Eq.}{Eqs.}
\usepackage{bbm}
\usepackage{pifont}
\usepackage{tikz}
\usepackage{graphicx}
\usepackage{bbold}
\usepackage{hyperref}

\usepackage[normalem]{ulem}


\newtheorem{theorem}{Theorem}[section]
\newtheorem{definition}[theorem]{Definition}
\newtheorem{proposition}[theorem]{Proposition}

\newtheorem{lemma}[theorem]{Lemma}
\newtheorem{remark}[theorem]{Remark}

\newcommand{\id}{{\rm Id}}

\newcommand{\one}{{\mathbf 1}}
\newcommand{\tr}{{\operatorname{tr}}}

\renewcommand{\d}{{\rm d}}

\newcommand{\pp}{{\mathbb P}}
\newcommand{\ee}{{\mathbb E}}
\newcommand{\ppch}{\pp^{\mathrm{ch}}}

\newcommand{\rr}{{\mathbb R}}
\newcommand{\nn}{{\mathbb N}}
\newcommand{\cc}{{\mathbb C}}

\renewcommand{\P}{{\mathrm P}}

\newcommand{\mat}{\mathrm{M}_k(\cc)}

\newcommand{\supp}{\operatorname{supp}}

\newcommand{\as}{\operatorname{-}\mathrm{a.s.}}

\newcommand\pt[1]{^{\otimes #1}}

\newcommand{\sta}{{\P(\cc^k)}}

\newcommand{\staalg}{\mathcal{B}}
\newcommand{\out}{\Omega}
\newcommand{\outalg}{\mathcal{O}}
\newcommand{\joint}{\sta\times \out}
\newcommand{\jointalg}{\mathcal{J}}

\renewcommand{\Re}{\operatorname{Re}}


\makeatletter
\newcommand{\specialcell}[1]{\ifmeasuring@#1\else\omit$\displaystyle#1$\ignorespaces\fi}
\makeatother

\begin{document}

\title{Quantum Trajectories. Spectral Gap, Quasi-compactness $\&$ Limit Theorems.}
\author{Tristan Benoist}
\email{tristan.benoist@math.univ-toulouse.fr}
\address{Institut de Mathématiques, UMR5219, Université de Toulouse, CNRS, UPS, F-31062 Toulouse Cedex 9, France}
 
\author{Arnaud Hautecœur}
\email{arnaud.hautecoeur@math.univ-toulouse.fr}
\address{Institut de Mathématiques, UMR5219, Université de Toulouse, CNRS, UPS, F-31062 Toulouse Cedex 9, France}

\author{Clément Pellegrini}
\email{clement.pellegrini@math.univ-toulouse.fr}
\address{Institut de Mathématiques, UMR5219, Université de Toulouse, CNRS, UPS, F-31062 Toulouse Cedex 9, France}
\maketitle

\begin{abstract}
   Quantum trajectories are Markov processes modeling the evolution of a quantum system subjected to repeated independent measurements. Inspired by the theory of random products of matrices, it has been shown that these Markov processes admit a unique invariant measure under a purification and an irreducibility assumptions. This paper is devoted to the spectral study of the underlying Markov operator. Using Quasi-compactness, it is shown that this operator admits a spectral gap and the peripheral spectrum is described in a precise manner. Next two perturbations of this operator are studied. This allows to derive limit theorems (Central Limit Theorem, Berry-Esseen bounds and Large Deviation Principle) for the empirical mean of functions of the Markov chain as well as the Lyapounov exponent of the underlying random dynamical system. 
\end{abstract}

\tableofcontents
\newpage

\section{Introduction}

Quantum trajectories are Markov processes describing the evolution of quantum systems undergoing indirect measurements. These physical models have been instrumental in the recent advances in quantum optics. From a physical point of view, their study is motivated by both practical applications and fundamental inquiries, as they delve into the central aspects of measurement and collapse phenomena in quantum mechanics. Among the most remarkable groundbreaking experiments, one can mention the one of the Serge Haroche's group (see for example \cite{guerlin_progressive_2007,haroche2006exploring}) or David Wineland's result \cite{https://doi.org/10.1002/anie.201303404,RevModPhys.75.281} which have been recognized by a Nobel prize in physics. The physics literature is flourishing and we refer to \cite{carmichael,haroche2006exploring,wisemanmilburn,breuer2002theory} for some introduction to the subject. Drawing an analogy with classical physics models like the Langevin equation that portray generic systems perturbed by some environment, quantum trajectories provide insight into the evolution of a quantum system influenced by both environment and measurement backaction. Their long time behaviour linked with their ergodic properties and stationary regimes is of central interest. 

From a mathematical point of view, a quantum trajectory is a Markov chain $(\hat{x}_n)$ on the projective space $\P(\mathbb{C}^k)$, defined by 
\[
\hat x_{n+1}=V_n\cdot \hat x_{n}=V_{n}\ldots V_{1} \cdot \hat x_0,
\]
where $V_n$ is a $\textrm M_k(\mathbb C)$-valued random variable with law $ ||v x_n||^2 \d \mu(v)$ and for $A\in \mathrm M_k(\cc)$ and $x\in \cc^k$ such that $Ax\neq 0$, $A\cdot\hat x$ is the element of $\sta$ representing $Ax$. The measure $\mu$ is a measure on $\mathrm M_k(\mathbb C)$ that satisfies
\begin{equation}
\int_{\mathrm M_k(\mathbb C)} v^* v \,\mathrm{d} \mu(v) = \id_{\cc^k}. 
\end{equation}
The associated transition kernel is given by
\begin{equation} \label{eq_deftranskernel1}
\Pi f(\hat x)=\int_{\mathrm M_k(\cc)} f(v\cdot \hat x)  \|v x\|^2 \d \mu(v),
\end{equation}
where $f$ is any bounded and measurable function. The study of these processes is particularly rich in intriguing mathematical problems. Notably, the transitions of the associated Markov chains are singular, and they do not exhibit generic $\varphi$-irreducibility \cite[Proposition~8.1]{BFP23}. Consequently, standard Markov technologies, as outlined in \cite{MT}, can not be applied directly. Another perspective is to view them as so-called place-dependent iterated function systems (IFS); however, they diverge from the standard criteria of contractivity for IFS (refer to \cite{barnsley_invariant_1988} for conventional IFS results). 

A particular interesting approach is portrayed by the analogy with the theory of random products of matrices. This theory was pioneered by Kesten and Furstenberg in \cite{furstenberg1960products}. The theoretical framework considers sequences of independent and identically distributed (i.i.d.) matrices $(A_i)$, with a primary emphasis on the top Lyapunov exponent, as discussed in \cite{furstenberg1960products,le2006theoremes,aoun2021law}: there exists $\gamma\in [-\infty,\infty[$ such that

\begin{equation}\label{eq:lyapclass}
\gamma=\lim_{n}\frac{1}{n}\log\Vert A_n \ldots A_1\Vert, \quad\mbox{almost surely}.
\end{equation} 
This result notably constitutes a generalization of the law of large numbers within a non-commutative context. In order to obtain results in this field, one is particularly interested in the study of the Markov chain $(\hat x_n)$, defined by $\hat x_{n+1} = A_{n+1}\cdot \hat x_n$. This is the essence of the analogy with quantum trajectories. Comprehensive insights into this field are available in reference books such as \cite{BL85} and \cite{Quint}. Numerous contributions have addressed limit theorems pertaining to the random products of matrices, with the law of large numbers intricately connected to \eqref{eq:lyapclass}. Subsequent refinements, including the central limit theorem, have been addressed in \cite{hennion1997limit, BL85, xiao2021limit, benoist2016central}. For analyses involving large deviations, results are presented in \cite{buraczewski2016precise, sert2019large, xiao2022edgeworth,gol1989lyapunov,10.1007/BFb0076833}.

A particularly robust methodology for deriving limit theorems involves the examination of the invariant measures and spectral properties of associated Markov chains. Beyond the simple analogy of the random products of matrices and quantum trajectories, the strategies and tools developed in this field are particularly pertinent to the study of the long-term behavior of quantum trajectories. Nevertheless, while quantum trajectories share a strong connection with the theory of products of random matrices, there are crucial differences that render their study particularly intriguing and involved. The main difference is that the random matrices $(V_{n})_n$ are non-i.i.d. Their law is defined by $||v x_n||^2\d\mu(v)$. Thanks to this alternative law that depends on the initial state $\hat x_0$, in contrast to the conventional theory of random products of matrices where matrices are necessarily invertible, this requirement is not mandatory in the context of quantum trajectories. This fundamental distinction render the theory of products of i.i.d. matrices not sufficient in our context but leads to results requiring weaker assumptions on $\mu$. 

 In \cite{BFPP17}, given a purification assumption (equivalent to the typical contractivity assumption in the theory of products of random matrices) and an irreducibility assumption (weaker than the commonly required strong irreducibility assumption), it has been demonstrated that the Markov operator defined in Equation~\eqref{eq_deftranskernel1} possesses a unique probability invariant measure $\nu_{\text{inv}}$ (\emph{i.e.} satisfying $\nu_{\text{inv}}\Pi = \nu_{\text{inv}}$). Subsequently, by introducing a statistical estimator of $(\hat x_n)$ based on maximum likelihood, exponential convergence toward the invariant measure, measured in Wasserstein distance, has been established. 
 
 The next natural step is to study the spectral properties of the Markov transition operator $\Pi$. This is one of the main aim of this article. Of particular significance in this context is the quasi-compactness of $\Pi$. We refer to \cite{HeHe01} and references therein for a comprehensive presentation of quasi-compact operators, we refer to \cite{Gui, 10.1214/13-AIHP566} for some applications and inspiration in the field of random products of matrices. Here, we prove that the operator $\Pi$ is quasi-compact on Banach spaces of H\"older continuous functions. Within this approach, we make explicit the peripheral spectrum of the operator $\Pi$ and we provide the corresponding eigenfunctions. Next, we study two different types of perturbations of $\Pi$. These perturbations are tiltings as used in some proofs of large deviation principle and limit theorems for Markov chains. We similarly obtain limit theorems for the empirical mean and the top Lyapounov exponent of the quantum trajectories. Such approach follows the usual one for random products of matrices \cite{BL85,HeHe01} but the specificity of our model imposes to develop new estimations and new arguments. In particular, the fact that the matrices $(V_n)$ are not invertible prevents exploitation of the known results from random products of matrices. Furthermore, since strong irreducibility is not assumed, several results on the support of invariant measures cannot be used and several key steps have to be completely re-investigated. This is particularly evident for the tilting analysis.
 
 The tilting method allows us to derive central limit theorems and large deviation principles. These limit theorems complete the ones obtained in \cite{BFP23}. The approach in \cite{BFP23} exploited the geometric convergence towards the invariant measure to construct solutions to Poisson equations and then martingale limit theorems. Here, all the limit theorems are consequences of the spectral gap property and the operator perturbation theory. Unlike \cite{BFP23}, we derive a large deviation principle as a restricted variant of G\"artner-Ellis Theorem (only moderate deviation principle were presented in \cite{BFP23}). Here, as in \cite{BFP23}, we derive central limit theorems for the empirical mean but we extend it also to the Lyapounov exponent. Furthermore, we strengthen them by deriving Berry-Esseen type results which are out of the scope of \cite{BFP23}.
\bigskip

The paper is structured as follows. In Section~\ref{sec:def}, we properly define quantum trajectories, introducing the Markov operator $\Pi$ and its tiltings. Section~\ref{sec:results} presents the main results of the paper, focusing on the spectral properties of the operator $\Pi$ and the analyticity of its perturbations. Additionally, we detail various limit theorems derived from the perturbation results. Section~\ref{sec:proof QCompact} is dedicated to proving the quasi-compactness of the operator $\Pi$ and making precise the whole peripheral spectrum. Following that, Section~\ref{sec:proofso} provides the proof the perturbations are analytic. In Section~\ref{sec:limitprooof}, we finally present the proofs of the limit theorems, specifically central limit theorems, speed of convergence (Berry-Esseen bounds) and restricted large deviation principles. Several technical tools are deferred to the appendices.

\section{Definitions}\label{sec:def}
This section introduces our main objects. We mostly use the definitions and notations of \cite{BFPP17}. We use the convention, $\nn=\{1,2,\dotsc\}$.

For $k\in \nn$ fixed, we consider the complex vector space $\mathbb{C}^k$ as a Hilbert space with the canonical inner product and denote by $\|.\|$ the $2$-norm on $\cc^k$. Our main process takes value in the  projective space $\P(\mathbb{C}^k)$. We equip it with its Borel $\sigma$-algebra $\staalg$.
For a nonzero vector $x\in\mathbb C^k$, we denote $\hat x$ the corresponding equivalence class of $x$ in $\P(\mathbb C^k)$. Accordingly, given $\hat x$, we denote by $x$ one of its representative of unit norm. We equip $\sta$ with the metric
$$d(\hat x,\hat y)=\sqrt{1-|\langle x,y\rangle|^2}.$$

For $0<\alpha\leq 1$, $C^{\alpha}(\sta,\cc)$ is the space of $\alpha$-Hölder continuous functions of $\sta$ valued in $\mathbb C$ (we shall also need sometimes to consider $C^{\alpha}(\sta,\mathbb R)$). We consider it as a Banach space with norm 
$$\|f\|_\alpha=\|f\|_\infty+m_\alpha(f)$$
for any function $f\in C^\alpha(\sta,\cc)$ where $\|.\|_\infty$ is the sup norm and $m_\alpha(f)$ is $f$ Hölder coefficient. The spectral radius of a bounded endomorphism $L$ of $C^\alpha(\sta,\cc)$ is denoted $\rho_\alpha(L)$.
\bigskip

We study a Markov chain on $\sta$ through the spectral analysis of its transition kernel $\Pi$, and perturbations of it, as linear maps on $C^\alpha(\sta,\cc)$. For a matrix  $v\in \mathrm{M}_k(\mathbb C)$ we denote $v\cdot \hat{x}$ the element of the projective space represented by $v\,x$ whenever $v\,x\neq 0$. We equip $\mathrm{M}_k(\mathbb C)$ with its Borel $\sigma$-algebra and let $\mu$ be a measure on $\mathrm{M}_k(\mathbb C)$ with a finite second moment, $\int_{\mathrm{M}_k(\mathbb C)} \|v\|^2\,\d\mu(v)<\infty$, that satisfies the stochasticity condition
\begin{equation}
\label{eq:stochastic family}
\int_{\mathrm{M}_k(\mathbb C)} v^* v \,\mathrm{d} \mu(v) = \id_{\cc^k},
\end{equation}
as already presented in Introduction. We define the Markov chain $(\hat{x}_n)$  on $\P(\mathbb{C}^k)$ as follows :
\[
\hat x_{n+1}=V_n\cdot \hat x_{n},
\]
where $V_n$ is an $\mathrm{M}_k(\mathbb C)$-valued random variable with law knowing $\hat{x}_n$, $ \|v x_n\|^2 \mathrm{d} \mu(v)$. Condition \eqref{eq:stochastic family} ensures this is a probability measure for any $\hat x_n\in \sta$. More precisely, such a Markov chain is associated with the transition kernel given by
\begin{equation} \label{eq_deftranskernel}
\Pi f(\hat x)=\ee(f(\hat x_1)|\hat x_0=\hat x)=\int_{\mathrm{M}_k(\cc)} f(v\cdot \hat x)  \|v x\|^2 \mathrm{d} \mu(v),
\end{equation}
{
for any continuous function $f:\sta\to\cc$ and $\hat x\in \P(\mathbb C^k)$. Since $f$ is continuous on $\sta$, whenever $\|v x\|^2=0$, we put $f(v\cdot \hat x)  \|v x\|^2=0$. With this convention, the function $\hat x\mapsto f(v\cdot \hat x)  \|v x\|^2 $ remains continuous.}

Now, since$$ \int_{\mathrm{M}_k(\cc)}\|vx\|^2\d\mu(v)=1,$$
the operator $\Pi$ is then well-defined as a linear map from bounded measurable functions to bounded measurable functions. 

For any probability measure $\nu$, we denote $\nu\Pi$ the probability measure defined by 
$$\nu\Pi(f)=\mathbb E_\nu[f(\hat x_1)]=\int_{\P(\cc^k)}\Pi f(\hat x)\d \nu(\hat x),$$
for any continuous function $f$. A measure $\nu$ is called invariant if $\nu \Pi = \nu$.

We shall also need to consider another Markov chain $(V_n,\hat x_n)_n$ where we keep the memory of the matrix $V_n$ inducing the transition $\hat x_{n-1}$ to $\hat x_n$. To this end we follow the construction of \cite{BFPP17}. 
We consider the space of infinite sequences $\out:=\mathrm{M}_k(\cc)^{\mathbb N}$, write $\omega = (v_1,v_2, \dots)$ for any such infinite sequence, and denote by $\pi_n$ the canonical projection on the  first $n$ components, $\pi_n(\omega)=(v_1,\ldots,v_n)$. Let $\mathcal M$ be the Borel $\sigma$-algebra on $\mathrm{M}_k(\cc)$. For $n\in\nn$, let $\outalg_n$ be the  $\sigma$-algebra on $\Omega$ generated by the $n$-cylinder sets, i.e.\ $\outalg_n = \pi_n^{-1}(\mathcal M^{\otimes n})$. We equip the space $\Omega$ with the smallest $\sigma$-algebra $\outalg$ containing $\outalg_n$ for all $n\in \nn${, \emph{i.e.} with its product $\sigma$-algebra}. We let $\mathcal B$ be the Borel $\sigma$-algebra on $\P(\mathbb C^k)$, and denote
\[\jointalg_n=\mathcal B\otimes \outalg_n,\qquad \jointalg=\mathcal B\otimes \outalg.\]
This makes $\big(\P(\mathbb C^k)\times \Omega,\jointalg\big)$ a measurable space. With a small abuse of notation, we denote the sub-$\sigma$-algebra $\{\emptyset,\P(\mathbb C^k)\}\times \outalg$ by $\outalg$, and equivalently identify any $\outalg$-measurable function $f$ with the $\jointalg$-measurable function $f$ satisfying $f(\hat{x},\omega) = f(\omega)$.

For $i\in\mathbb N$, we consider the random variables $V_i : \Omega \to  \mathrm{M}_k(\mathbb C)$,
\begin{equation}
	V_i(\omega) = v_i \quad \mbox{for} \quad \omega=(v_1,v_2,\ldots), \label{eq:W}
\end{equation}
and we introduce $\mathcal O_n$-mesurable random variables $(W_n)$ defined for all $n\in\mathbb N_0$ as $W_0=\id$,
$$W_n=V_{n}\ldots V_{1}.$$ 

With a small abuse of notation, we identify cylinder sets and their bases, and extend this identification to several associated  objects. In particular, we identify $O_n\in \mathcal M^{\otimes n}$ with $\pi_n^{-1}(O_n)$, a function $f$ on $\mathcal M^{\otimes n}$  with $f \circ \pi_n$ and a measure $\mu^{\otimes n}$ with the measure $\mu^{\otimes n} \circ \pi_n$.
Since $\mu$ is not necessarily finite, we can not extend $(\mu^{\otimes n})$ into a measure on $\Omega$.

Let $\nu$ be a probability measure on $(\sta,\mathcal B)$. We extend it to a probability measure $\mathbb{P}_\nu$ on $(\sta\times\out,\jointalg)$ by letting, for any $S\in \mathcal B$ and any cylinder set $O_n \in \outalg_n$,
\begin{equation} \label{eq_defPnu}
\mathbb{P}_\nu(S \times O_n):=\int_{S\times O_n}  \|W_n(\omega)x\|^2 \d\nu(\hat x) \d\mu\pt n(\omega).
\end{equation}
From relation \eqref{eq:stochastic family}, it is easy to check that the expression \eqref{eq_defPnu} defines a consistent family of probability measures and, by Kolmogorov's extension Theorem, this defines a unique probability measure $\pp_\nu$ on $\joint$. In addition, the restriction of $\mathbb{P}_\nu$ to $\mathcal B\otimes \{\emptyset,\Omega\}$ is by construction $\nu$.
\bigskip

We can consider the random process $(V_n, \hat x_n)$ whose transition is given by
$$\mathbb P[V_{n+1}\in A,\hat x_{n+1}\in B\vert \hat x_n=x]=\int_A\mathbb 1_B(v\cdot \hat x)\Vert vx\Vert^2\d\mu(v),$$
for all Borel sets $A$ of $\mathrm M_k(\mathbb C)$ and all Borel sets $B$ of $\sta$. The process $(\hat x_n)$ on $(\Omega\times \P(\mathbb C^k),\jointalg, \mathbb{P}_\nu)$ has the same distribution as the Markov chain defined by $\Pi$ and initial probability measure $\nu$. The process $(V_n,\hat x_n)_n$ will be usefull in Section~\ref{sec:results W_n} and more precisely in the proofs of the results of the Lyapounov exponent. The process $(W_n)$ will also be used in Section~\ref{sec:results W_n}.

Note that the couple $(V_n,\hat x_n)$ is the original Markov process $(\hat x_n)$ with the memory of which matrix $V_n$ induced the transition from $\hat x_{n-1}$ to $\hat x_{n}$. It is defined only for $n\geq 1$. Note that on its own the sequence $(V_n)$ is not a Markov chain. Actually it is a hidden Markov chain with underlying Markov chain $(\hat x_n)$. This fact differs crucially from the usual theory of random products of matrices where the choice of matrices is i.i.d, thus Markov.


\bigskip
We are now in the position to present the two classes of tiltings of $\Pi$ so as to derive some limit theorems for the Markov chain $(\hat x_n)$. First, for any $h\in C^\alpha(\sta,\mathbb R)$ and $z\in \cc$, let $\Pi_z$ be the linear map on $C^\alpha(\sta,\cc)$ defined for all $\hat x\in \sta$
$$\Pi_zf(\hat x)=\int e^{zh(v\cdot\hat x)}f(v\cdot\hat x)\|vx\|^2\d\mu(v)=\Pi(e^{zh}f)(\hat x).$$
Remark that $\Pi_0=\Pi$. This first tilting (we shall use also the name perturbation from time to time) is related  to the Markov chain $(\hat x_n)$.

Second, assuming there exist $\tau\in (0,2)$ and $\delta>0$ such that $\int \|v\|^{2+s}\d\mu(v)<\infty$ for any $s\in (-\tau,\delta)$, for any $z$ in the strip $\{w\in \cc: \operatorname{Re}(w)\in (-\tau,\delta)\}$, let $\Gamma_z$ be the linear map on $C^\alpha(\sta,\cc)$ defined by
$$\Gamma_zf(\hat x)=\int f(v\cdot \hat x)e^{z\log \|vx\|}\|vx\|^2\d\mu(v)$$
for all $\hat x\in \sta$.
Again, $\Gamma_0=\Pi$. This tilting is related to the Markov chain $(V_n,\hat x_n).$

The tiltings $\Pi_z$ and $\Gamma_z$ are particularly useful to study limit theorems. Proofs of limit theorems using that method require some smoothness with respect to $z$ in a neighborhood of $0$. This is what we will prove on top of the spectral gap for $\Pi$.

\section{Results}\label{sec:results}
\subsection{Assumptions}
We prove our results under the same two assumptions as in \cite{BFPP17,BFP23}. Their motivation and meaning are discussed in both these references. Especially, in \cite{BFPP17} a comparison with usual assumptions for i.i.d. products of matrices is provided. One notable difference with the i.i.d. situation is that we do not assume $v$ is invertible $\mu$-almost everywhere. 

The first one is a purification hypothesis. It means that there is no subspaces of $\cc^k$ of dimension greater or equal to $2$ on which product of matrices from the support of $\mu$ are proportional to unitary matrices. It is akin to the contractivity assumption for i.i.d. products of matrices -- see \cite[Appendix A]{BFPP17}.

\medskip
\noindent{\bf (Pur)} Any orthogonal projector $\pi$ such that $\pi v_1^*\dotsb v_n^*v_n\dotsb v_1\pi\propto \pi$ for $\mu^{\otimes n}$-almost every $(v_1,\dotsc,v_n)$ is of rank one.
\medskip

Here the symbol $\propto$ means "proportional to" with $0$ an allowed proportionality constant. The second assumption concerns the irreducibility of the map $\Phi: X\mapsto \int v^*Xv\d\mu(v)$ -- see \cite{BFPP17,BFP23}. It is weaker than the strong irreducibility assumption used for i.i.d. products of matrices. Note that the map $\Phi$ is called a quantum channel in the quantum mechanics and information communities.

\medskip
\noindent{\bf (Erg)} There exists a unique minimal subspace $E\neq\{0\}$ of $\cc^k$ such that $vE\subset E$ for $\mu$-almost every $v$.
\medskip

All our results are formulated assuming {\bf (Pur)} and {\bf (Erg)} hold. We recall them when useful but do not repeat them systematically.

\subsection{Spectral gap}
{ As a preliminary, we ensure $\Pi$ is a bounded endomorphism of the Hölder function Banach space.
\begin{proposition}\label{prop:Endo}
For every $0<\alpha\leq 1$, $\Pi$ is a bounded endomorphism of $C^\alpha(\sta,\cc)$.
\end{proposition}
The proof of this proposition is a direct corollary of \Cref{Lemma}. Our first main result is that, actually, $\Pi$ is quasi-compact.} 
In particular it possesses a spectral gap.

\begin{theorem}\label{QCompact}
    Assume {\bf (Pur)} and {\bf (Erg)} hold. Then, for every $0< \alpha \leq 1 $, there exist two subspaces $\mathcal{F}$ (independent of $\alpha$) and $\mathcal{G}_\alpha$ of $C^\alpha(\sta,\cc)$ such that 
    \begin{enumerate}
        \item \label{Compact1} $C^\alpha(\sta,\cc) = \mathcal{F} \oplus \mathcal{G}_\alpha $,
        \item \label{Compact1bis} $\Pi \mathcal F\subset \mathcal F$ and $\Pi \mathcal G_\alpha\subset \mathcal G_\alpha$,
        \item \label{Compact3}$\rho_\alpha(\Pi|_{\mathcal{G}_\alpha}) < \rho_\alpha(\Pi)=1 $,
        \item \label{Compact2}$\mathcal{F}$ is finite dimensional and the spectrum of $\Pi|_{\mathcal{F}}$ is a finite subgroup of $U(1)=\{z\in\cc,\vert z\vert=1\}$:
        $$\sigma(\Pi|_{\mathcal{F}}) =\{ e^{i\frac{l}{p}2\pi} \; | \; l=0,...,p-1 \}.$$
        In addition, all these eigenvalues are simple.
    \end{enumerate}
\end{theorem}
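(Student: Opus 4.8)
The plan is to establish quasi-compactness of $\Pi$ on $C^\alpha(\sta,\cc)$ via a Doeblin--Fortet (Ionescu-Tulcea--Marinescu) inequality, and then to analyze the peripheral spectrum using the purification and ergodicity assumptions together with general Banach-space spectral theory.

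\textbf{Step 1: Uniform boundedness and a Doeblin--Fortet inequality.} First I would check that $\Pi$ is a bounded operator on $C^\alpha(\sta,\cc)$ with $\|\Pi f\|_\infty \le \|f\|_\infty$, so $\rho_\alpha(\Pi) \le 1$; since $\Pi \one = \one$, in fact $\rho_\alpha(\Pi) = 1$. The heart of the matter is to produce constants $N\in\nn$, $r<1$ and $R<\infty$ such that
$$
m_\alpha(\Pi^N f) \le r\, m_\alpha(f) + R\, \|f\|_\infty \qquad \text{for all } f\in C^\alpha(\sta,\cc).
$$
To get the contraction of the Hölder seminorm one needs that, after $N$ steps, the random maps $v\cdot(\,\cdot\,)$ contract the metric $d$ on $\sta$ \emph{on average}. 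This is exactly where \pur{} enters: purification is equivalent to the statement that $\ee[d(\hat x_n,\hat y_n)^s]$ decays exponentially for small $s>0$ (the ``contractivity'' analogue from \cite[Appendix A]{BFPP17}), uniformly in the starting points $\hat x,\hat y$. Combining such a contraction estimate with the elementary bound $|f(v\cdot\hat x)-f(v\cdot\hat y)| \le m_\alpha(f)\, d(v\cdot\hat x, v\cdot\hat y)^\alpha$ and Jensen's inequality (taking $\alpha$ small enough, or using the $s$-th moment with $s\le\alpha$ and interpolating) yields the inequality above. Once this is in hand, the Ionescu-Tulcea--Marinescu theorem (as presented in \cite{HeHe01}), using that closed balls of $C^\alpha$ are relatively compact in $C^0(\sta,\cc)$ by Arzelà--Ascoli, gives quasi-compactness: the essential spectral radius of $\Pi$ on $C^\alpha(\sta,\cc)$ is $<1$. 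This already produces the decomposition $C^\alpha = \mathcal F \oplus \mathcal G_\alpha$ with $\mathcal F$ finite dimensional, $\Pi$-invariant, spectrum of $\Pi|_{\mathcal F}$ on the unit circle, $\rho_\alpha(\Pi|_{\mathcal G_\alpha})<1$, proving items \eqref{Compact1}, \eqref{Compact1bis}, \eqref{Compact3}, and the finite-dimensionality in \eqref{Compact2}. The independence of $\mathcal F$ from $\alpha$ follows because $\mathcal F$ is spanned by eigenfunctions for peripheral eigenvalues, and one shows a priori that any such eigenfunction lies in $\bigcap_\alpha C^\alpha$ (e.g. by bootstrapping the eigenrelation $\Pi g = \lambda g$ through the Doeblin--Fortet inequality).

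\textbf{Step 2: The peripheral spectrum is a cyclic group of simple eigenvalues.} It remains to identify $\sigma(\Pi|_{\mathcal F})$ as $\{e^{2\pi i l/p}: l=0,\dots,p-1\}$ with all eigenvalues simple. I would first show $1$ is a simple eigenvalue: the eigenspace for $\lambda=1$ consists of $\Pi$-harmonic Hölder functions, and by \cite{BFPP17} the chain has a \emph{unique} invariant probability measure $\nu\inv$; a standard argument (the harmonic function must be $\nu\inv$-a.e. constant by the ergodic theorem / martingale convergence, then constant everywhere by continuity and the fact that $\supp\nu\inv$ is visited from every starting point under \irr{}) forces the $\lambda=1$ eigenspace to be $\cc\one$. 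Next, for any peripheral eigenvalue $\lambda=e^{i\theta}$ with eigenfunction $g$, the identity $|\Pi g|\le \Pi|g|$ together with $\Pi^n g = \lambda^n g$ and uniqueness of the invariant measure forces $|g|$ to be constant (say $\equiv 1$); then $g$ is a continuous function of modulus one with $\Pi g = \lambda g$, which is a \emph{multiplicative-type} relation. From $\Pi(\bar g \cdot g) = \Pi|g|^2 = 1$ and $|\Pi g| = |g|$ one deduces that the integrand $g(v\cdot\hat x)$ is $\mu$-a.s. equal to $\lambda g(\hat x)$ for $\|vx\|^2\d\mu(v)$-a.e. $v$ — i.e. $g$ satisfies a deterministic cocycle-type equation along trajectories. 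This gives simplicity of $\lambda$ (two eigenfunctions $g_1,g_2$ for the same $\lambda$ have $g_1\bar g_2$ $\Pi$-invariant, hence constant), and closure under multiplication (if $\Pi g_i = \lambda_i g_i$ with $|g_i|\equiv 1$ then $\Pi(g_1 g_2) = \lambda_1\lambda_2 g_1 g_2$), so the peripheral point spectrum is a subgroup of $U(1)$; being finite, it is cyclic, $\{e^{2\pi i l/p}\}$.

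\textbf{Main obstacle.} The delicate step is Step 1 — more precisely, extracting from \pur{} a quantitative, \emph{uniform-in-starting-point} exponential contraction of a fractional moment of $d(\hat x_n,\hat y_n)$, and doing the bookkeeping that matches the available moment $s$ with the Hölder exponent $\alpha$ (one likely proves the Doeblin--Fortet inequality first for small $\alpha$ and then propagates quasi-compactness to all $\alpha\in(0,1]$, using that $\mathcal F$ and the peripheral spectrum do not depend on $\alpha$). The non-invertibility of the matrices $v$ and the absence of strong irreducibility mean the classical random-matrix-product estimates (e.g. from \cite{BL85}) cannot be quoted directly, so the contraction estimate has to be redone from the purification assumption itself — this is exactly the ``new estimations and new arguments'' the introduction alludes to. A secondary subtlety in Step 2 is justifying that a peripheral eigenfunction has constant modulus: this again uses uniqueness of $\nu\inv$ plus the \irr{}-driven fact that the support of $\nu\inv$ is ``accessible'' from everywhere, so that $\nu\inv$-a.e. statements upgrade to everywhere statements for continuous functions.
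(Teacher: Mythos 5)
Your Step~1 is essentially the paper's argument: a Doeblin--Fortet inequality for $\|\Pi^n f\|_\alpha$ driven by the purification-based geometric decay of $g(n)=\int\|\wedge^2 W_n\|\,\d\mu^{\otimes n}$ (quoted from \cite{BFPP17}), combined with Arzel\`a--Ascoli and the Ionescu-Tulcea--Marinescu theorem in the form of \cite{HeHe01}. The only points you underplay are that $\Pi^n f(\hat x)-\Pi^n f(\hat y)$ also contains the difference of the sampling weights $\|W_nx\|^2-\|W_ny\|^2$ (the paper controls this term separately, via $|\|Ax\|^2-\|Ay\|^2|\le \tr(A^*A)\,d(\hat x,\hat y)$ and a change of measure plus Jensen), and that no small-$\alpha$ bootstrap is needed: the inequality holds for every $\alpha\in(0,1]$ directly since $g(n)^\alpha\le\lambda^{n\alpha}$.

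The genuine gap is in Step~2. Your identification of the peripheral spectrum rests on the claim that an eigenfunction $g$ with $\Pi g=\lambda g$, $|\lambda|=1$, has constant modulus on all of $\sta$, the $\nu_{\mathrm{inv}}$-a.e.\ statement being ``upgraded to everywhere'' by continuity and accessibility of $\supp\nu_{\mathrm{inv}}$. This upgrade is impossible, and the paper's own eigenfunctions are explicit counterexamples: they are $f_l(\hat x)=\sum_{r=1}^m e^{i\frac{rl}{m}2\pi}\langle x,M_rx\rangle$ (Lemma~\ref{lem:finite subgroup}, with $M_r$ the cycle operators of Proposition~\ref{prop:cycle operators}), so that for $x\in E$ with components $x_r\in E_r$ one has $f_l(\hat x)=\sum_r e^{i\frac{rl}{m}2\pi}\|x_r\|^2$; for $m=2$, $l=1$ and a balanced superposition $x=(u+w)/\sqrt 2$, $u\in E_1$, $w\in E_2$, this gives $f_1(\hat x)=0$. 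Thus $|f_l|$ equals $1$ only on $\bigcup_r\P(E_r)$ (which carries $\nu_{\mathrm{inv}}$) and can even vanish elsewhere. Consequently your subharmonicity argument only yields $|g|$ constant $\nu_{\mathrm{inv}}$-a.e.; the cocycle relation $g(v\cdot\hat x)=\lambda g(\hat x)$, the closure under products ($\Pi(g_1g_2)=\lambda_1\lambda_2\,g_1g_2$), and the simplicity argument via $g_1\bar g_2$ all break down off the support (the conjugation/division tricks additionally need $|g|$ bounded away from $0$, which fails). Restricting to $\supp\nu_{\mathrm{inv}}$ does not close the argument either: you would still need to show that a peripheral eigenfunction vanishing on the support vanishes identically (to count multiplicities in $C^\alpha(\sta,\cc)$), and to produce genuine $C^\alpha$ eigenfunctions on all of $\sta$ realizing each peripheral eigenvalue. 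The paper sidesteps both issues: it shows every peripheral eigenvalue is an $m$-th root of unity using the Ces\`aro convergence of \cite{BFPP17}, constructs the eigenfunctions explicitly from the cycle operators of the irreducible channel $\Phi$ (Perron--Frobenius theory for completely positive maps, Appendix~\ref{app:cycl}), and identifies $\mathcal F$ with the range of $\lim_n\Pi^{mn}$ (Lemmas~\ref{lem:charactrization of cF} and~\ref{lem:convergence Pim}), which yields simplicity.
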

In summary, if {\bf (Pur)} and {\bf (Erg)} hold, $\Pi$ can be written as,
\begin{equation}\label{eq:decompoquasicompact}\Pi=\quad\sum_{l=0}^{p-1} e^{i\frac{l}{p}2\pi}f_l\nu_l \quad +\quad T,
\end{equation}
in the sense that for all bounded and measurable functions $f$, we have
$$\Pi f=\quad\sum_{l=0}^{p-1} e^{i\frac{l}{p}2\pi}f_l\nu_l(f) \quad +\quad T(f).$$
Here, we have that 
$$f_l\in C^\alpha(\sta,\cc),\quad \nu_l\in C^\alpha(\sta,\cc)^*\quad \mbox{and}\quad \nu_l(f_k)=\delta_{l,k}$$
and 
$$T:C^\alpha(\sta,\cc)\to C^{\alpha}(\sta,\cc),\quad \rho_\alpha(T)<1\quad and \quad Tf_l=0,\quad \nu_lT=0$$
for any $l,k\in \{0,\dotsc,p-1\}$.

Note that along the proof of part (4) of Theorem~\ref{QCompact}, we give the explicit form of the functions $f_l,l=0,\ldots,p-1$. They are obtained in Lemma~\ref{lem:charactrization of cF}. This gives a complete characterization of the space $\mathcal F$ as the linear span of those functions. These eigenfunctions are defined in terms of eigenvectors of the linear operator $\Phi$ associated to $\mu$ whose study is postponed to Appendix~\ref{app:cycl}.

 Theorem~\ref{QCompact} is proved in Section~\ref{sec:proof QCompact}. It is based on a generalization of a theorem by Ionescu-Tulcea and Marinescu~\cite{ITM50} that can be found in \cite{HeHe01}. These tools were used to prove quasi-compactness for i.i.d. products of matrices -- see \cite{Gui} and references therein.

\subsection{Tiltings}
The two tiltings we have defined are analytic with respect to $z$.
\begin{theorem}\label{Analytique}
Assume {\bf (Pur)} and {\bf (Erg)} hold. Then, for any $0<\alpha\leq 1$ {and $z \in \cc$, the operator $\Pi_z$ is a bounded endomorphism of $C^\alpha(\sta,\cc)$ and the map} 
$$
\begin{array}{rcl}
    z & \longmapsto & \Pi_z
\end{array}
$$
is analytic on $\cc$.
\end{theorem}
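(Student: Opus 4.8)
The plan is to exhibit $z\mapsto \Pi_z$ as a norm-convergent power series of bounded operators on $C^\alpha(\sta,\cc)$, which by the standard theory of analytic Banach-space valued maps gives analyticity on all of $\cc$. Write formally
$$
\Pi_z f(\hat x)=\int e^{zh(v\cdot\hat x)}f(v\cdot\hat x)\|vx\|^2\,\d\mu(v)
=\sum_{n\ge 0}\frac{z^n}{n!}\int h(v\cdot\hat x)^n f(v\cdot\hat x)\|vx\|^2\,\d\mu(v)
=\sum_{n\ge 0}\frac{z^n}{n!}\,\Pi(h^n f)(\hat x),
$$
so the natural candidate for the $n$-th coefficient is the operator $L_n:f\mapsto \Pi(h^n f)$, i.e. $L_n = \Pi\circ M_{h^n}$ where $M_g$ denotes multiplication by $g$. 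The two things to check are: (i) each $L_n$ is a bounded endomorphism of $C^\alpha(\sta,\cc)$, and (ii) the series $\sum_n \frac{z^n}{n!}L_n$ converges in operator norm for every $z\in\cc$, so the resulting sum is entire and agrees with $\Pi_z$.

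For (i), I would first note that $C^\alpha(\sta,\cc)$ is a Banach algebra under pointwise product: $\|fg\|_\alpha\le \|f\|_\alpha\|g\|_\alpha$ (up to an irrelevant constant, from $\|fg\|_\infty\le\|f\|_\infty\|g\|_\infty$ and $m_\alpha(fg)\le \|f\|_\infty m_\alpha(g)+\|g\|_\infty m_\alpha(f)$). Hence $M_g$ is bounded on $C^\alpha$ with $\|M_g\|\le \|g\|_\alpha$, and in particular $\|M_{h^n}\|\le \|h^n\|_\alpha\le \|h\|_\alpha^{\,n}$. Since $\Pi$ is a bounded endomorphism of $C^\alpha(\sta,\cc)$ — this is part of the content of Theorem~\ref{QCompact}, where $\Pi$ is quasi-compact hence in particular bounded on $C^\alpha$, with $\rho_\alpha(\Pi)=1$; one can also record a crude bound $\|\Pi\|_\alpha\le C_\Pi$ directly from \eqref{eq_deftranskernel} using $\int\|vx\|^2\d\mu(v)=1$ together with the Hölder estimate on $\hat x\mapsto v\cdot\hat x$ used in the proof of quasi-compactness — the composition $L_n=\Pi\circ M_{h^n}$ is bounded with $\|L_n\|_\alpha\le C_\Pi\,\|h\|_\alpha^{\,n}$.

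Given that bound, (ii) is immediate: $\sum_{n\ge 0}\frac{|z|^n}{n!}\|L_n\|_\alpha\le C_\Pi\sum_{n\ge 0}\frac{(|z|\,\|h\|_\alpha)^n}{n!}=C_\Pi\,e^{|z|\,\|h\|_\alpha}<\infty$ for every $z\in\cc$, so $S(z):=\sum_n\frac{z^n}{n!}L_n$ defines an entire $\mathcal L(C^\alpha(\sta,\cc))$-valued function with infinite radius of convergence. Finally I must identify $S(z)$ with $\Pi_z$: for fixed $f\in C^\alpha$ and $\hat x\in\sta$, the dominated convergence theorem (dominating function $|f(v\cdot\hat x)|\,e^{|z|\,\|h\|_\infty}\|vx\|^2$, integrable since $\int\|vx\|^2\d\mu(v)=1$) lets one exchange sum and integral in $\int\sum_n\frac{z^n}{n!}h(v\cdot\hat x)^nf(v\cdot\hat x)\|vx\|^2\d\mu(v)$, giving $\Pi_zf(\hat x)=\sum_n\frac{z^n}{n!}\Pi(h^nf)(\hat x)=S(z)f(\hat x)$ pointwise; since both sides lie in $C^\alpha$ this is an equality in $C^\alpha(\sta,\cc)$, hence $\Pi_z=S(z)$ as operators.

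The only genuinely non-routine point is making sure $\Pi$ maps $C^\alpha$ into $C^\alpha$ with a uniform norm bound and that the Hölder-algebra estimate is clean; both are mild, and the first is in any case subsumed by Theorem~\ref{QCompact}. I would expect the write-up to be short, with the Banach-algebra property of $C^\alpha(\sta,\cc)$ stated as a small lemma (or recalled) and the rest as above.
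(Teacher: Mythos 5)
Your proposal is correct and follows essentially the same route as the paper: expand $\Pi_z=\sum_n \frac{z^n}{n!}\Pi(h^n f)$, bound each coefficient operator via the sub-multiplicativity of $\|\cdot\|_\alpha$ together with the boundedness of $\Pi$ on $C^\alpha(\sta,\cc)$ (the paper's Lemma~\ref{Lemma}), and conclude norm convergence of the series with identification via Fubini/dominated convergence. No gaps worth noting.
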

\begin{theorem}\label{AnalyticLog}
    Assume {\bf (Pur)} and {\bf (Erg)} hold. Assume there exist $\tau\in (0,2)$ and $\delta>0$ such that $\int \|v\|^{2-\tau}+\|v\|^{2+\delta}\d\mu(v)<\infty$. Let $\alpha=1-\tau/2$. Then, {for every $z \in \{z\in\mathbb C:\Re(z)\in(-\tau,\delta)\}$, the operator $\Gamma_z$ is a bounded endomorphism of $C^\alpha(\sta,\cc)$ and the map}
    $$\begin{array}{ccccc}
              z & \longmapsto & \Gamma_z
    \end{array}$$
    is analytic on the strip $\{z\in\mathbb C:\Re(z)\in(-\tau,\delta)\}$.
\end{theorem}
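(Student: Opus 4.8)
The plan is to show $z \mapsto \Gamma_z$ is given locally by a norm-convergent power series with coefficients in the space of bounded endomorphisms of $C^\alpha(\sta,\cc)$, where $\alpha = 1 - \tau/2$. Fix $z_0$ in the open strip, and write formally
$$
\Gamma_z f(\hat x) = \int f(v\cdot\hat x)\, \|vx\|^{z+2}\,\d\mu(v)
= \sum_{n\geq 0} \frac{(z-z_0)^n}{n!}\int f(v\cdot\hat x)\,(\log\|vx\|)^n\,\|vx\|^{z_0+2}\,\d\mu(v).
$$
So the natural candidate for the $n$-th coefficient is the operator $L_n^{(z_0)} f(\hat x) := \int f(v\cdot\hat x)\,(\log\|vx\|)^n\,\|vx\|^{z_0+2}\,\d\mu(v)$, and the theorem reduces to two things: (i) each $L_n^{(z_0)}$ is a bounded endomorphism of $C^\alpha(\sta,\cc)$; (ii) $\sum_n \frac{|z-z_0|^n}{n!}\,\|L_n^{(z_0)}\|_{\alpha\to\alpha} < \infty$ for $z$ in a neighborhood of $z_0$, which also legitimizes the interchange of sum and integral. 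Both hinge on uniform control of $\int \|v\|^{s+2}\,|\log\|v\||^n\,\d\mu(v)$, which is finite for $s$ in any compact subinterval of $(-\tau,\delta)$ because $|\log t|^n \leq C_{n,\epsilon}(t^{-\epsilon} + t^{\epsilon})$ and one has moments of order $2-\tau$ and $2+\delta$ (hence, by interpolation/convexity, of every order in between); choosing $\epsilon$ small so that $z_0 \pm \epsilon$ stays in the strip gives the bound, with the $n$-dependence of $C_{n,\epsilon}$ growing only like $n!\,\epsilon^{-n}$ up to constants, which the $1/n!$ in the series comfortably absorbs once the neighborhood radius is taken below $\epsilon$.

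The substantive work is step (i): the Hölder estimate on $L_n^{(z_0)}$. Here I would decompose, for $\hat x, \hat y \in \sta$, the difference $L_n^{(z_0)}f(\hat x) - L_n^{(z_0)}f(\hat y)$ into a term controlling the variation of $f(v\cdot\hat x) - f(v\cdot\hat y)$ and a term controlling the variation of the weight $(\log\|vx\|)^n\|vx\|^{z_0+2} - (\log\|vy\|)^n\|vy\|^{z_0+2}$. For the first term one uses $d(v\cdot\hat x, v\cdot\hat y) \leq \|v\| \, \|v^{-1}\|$-type contraction bounds — but crucially $v$ need not be invertible here, so instead one uses the standard elementary inequality $d(v\cdot\hat x, v\cdot\hat y) \leq \frac{\|v\|}{\|vx\|\wedge\|vy\|}\,d(\hat x,\hat y)$ (or its symmetrized version), pairing the factor $\|vx\|^{-\alpha}$ this produces against the weight $\|vx\|^{z_0+2}$; since $\Re(z_0) + 2 > 2-\tau = 2-2(1-\alpha) = 2\alpha$ wait — one needs $\Re(z_0)+2-\alpha > $ some negative threshold, and the choice $\alpha = 1-\tau/2$ is exactly what makes $\Re(z_0) + 2 - \alpha$ bounded below by $2-\tau-\alpha = 1-\tau/2 > -\tau$... the point being that $\alpha = 1-\tau/2$ is the precise exponent for which the lost power of $\|vx\|^{-1}$ is still integrable against the available moments throughout the strip. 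For the second term one controls the modulus of continuity of $t \mapsto (\log t)^n t^{z_0+2}$ and of $\hat x \mapsto \|vx\|$ (which is Lipschitz with constant $\|v\|$), again landing on an integrand dominated by a fixed power of $\|v\|$ times a polynomial in $\log\|v\|$.

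The main obstacle I anticipate is precisely this Hölder bound for the weight-variation term near $\hat x$ with $\|vx\|$ small, combined with keeping the constants' dependence on $n$ at most of order $n!\,C^n$: one must handle the non-differentiability of $t\mapsto t^{z_0+2}$ at $0$ when $\Re(z_0) < 0$ by splitting according to whether $\|v\|$ is large or small and using a Hölder-in-$t$ rather than Lipschitz-in-$t$ estimate there, which is exactly where $\alpha = 1-\tau/2$ must be used sharply. Once (i) and the moment bounds are in hand, analyticity follows from the general principle that a function with values in a Banach space that is locally the sum of a norm-convergent power series is analytic; uniqueness of the power series and the fact that it agrees with $\Gamma_z$ on the real interval extends it to the full complex strip by the identity theorem applied slicewise. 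This mirrors the proof of Theorem~\ref{Analytique}, where the absence of the $\log\|vx\|$ weight and the bound $e^{\Re(z)h} \leq e^{|z|\,\|h\|_\infty}$ make the moment bookkeeping trivial and no sharp choice of $\alpha$ is needed.
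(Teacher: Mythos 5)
Your plan follows the paper's proof quite closely: your coefficient operators $L_n^{(z_0)}$ are exactly the paper's $\Gamma_n^{z_0}$, the radius of convergence is governed by the distance of $\Re(z_0)$ to the edges of the strip, and the factorial-type growth of the constants is absorbed by the $1/n!$ of the series, just as in Section~\ref{sec:proof AnalyticLog}. The bookkeeping you describe for the sup-norm part is the right one; the elementary fact behind it is $\sup_{0<s\le 1}s^{d}(-\log s)^n=e^{-n}(n/d)^n\le n!\,d^{-n}$ and its analogue $\log^n t\le e^{-n}(n/\theta)^n t^\theta$ for $t\ge1$, which is precisely how the paper obtains a convergence radius $\min(\Re(z_0)+\tau,\delta-\Re(z_0))$ (Appendix~\ref{app:function}).

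There are two points, exactly at the place you yourself flag as the crux, where the sketch as written would not close. First, the exponent condition is not an integrability statement about $\mu$-moments: $\|vx\|$ can be arbitrarily small (even zero) for a fixed nonzero $v$, so what is needed is the pointwise condition $\Re(z_0)+2-2\alpha\ge 0$ for all $z_0$ in the strip, i.e.\ $\alpha\le 1-\tau/2$; your first inequality $\Re(z_0)+2>2\alpha$ was the correct one, while the subsequent ``$\Re(z_0)+2-\alpha>-\tau$'' is not the relevant requirement. Moreover, with the bound $d(v\cdot\hat x,v\cdot\hat y)\le \|\wedge^2 v\|\,\|vx\|^{-1}\|vy\|^{-1}d(\hat x,\hat y)$ the Hölder loss sits at \emph{both} norms, so you must attach the weight at the \emph{smaller} one: assuming $\|vx\|\ge\|vy\|$, split the integrand as $\big(f(v\cdot\hat x)-f(v\cdot\hat y)\big)(\log\|vy\|)^n\|vy\|^{z_0+2}+f(v\cdot\hat x)\big((\log\|vx\|)^n\|vx\|^{z_0+2}-(\log\|vy\|)^n\|vy\|^{z_0+2}\big)$, whence $\|vy\|^{\Re(z_0)+2-2\alpha}\|v\|^{2\alpha}\le\|v\|^{\Re(z_0)+2}$; pairing a loss at the minimum against a weight sitting at the maximum, as your min-denominator inequality suggests, does not work when $\|vy\|\ll\|vx\|$. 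This is the content of Lemma~\ref{lem:vxf(vx) holder} and of the splitting \eqref{eq:split G F(f-f)}--\eqref{eq:split G f(F-F)}. Second, the $n!\,C^n$ control must be established not only for the moments but for the Hölder \emph{seminorm} of the weight $t\mapsto \log^n t\, e^{z\log t}$ (composed with the Lipschitz map $\hat x\mapsto\|vx\|^2$); this is the derivative analysis of Lemmas~\ref{logbta}--\ref{lem:Holder norm F_nz}, which yields constants of order $e^{-n}(n/\gamma)^n\le n!\,\gamma^{-n}$ with $\gamma$ comparable to the distance to the strip boundary. With these two points made precise your argument coincides with the paper's; the closing identity-theorem step is superfluous, since once absolute convergence is established Fubini identifies the series with $\Gamma_{z_0+w}$ directly for complex increments $w$.
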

{
The proof of Theorem~\ref{Analytique} is provided in Section~\ref{sec:proof Analytique} and the one of Theorem~\ref{AnalyticLog} in Section~\ref{sec:proof AnalyticLog}. 

\begin{remark}Note that the dual space of $C^\alpha(\sta,\cc)$ is non trivial. Thus, proving a weak analyticity to deduce these results does not seem to be a good strategy. In particular results like \cite[Chapter 3]{kato2013perturbation} cannot be straightforwardly applied. We use two different aternative strategies for the two theorems. The proof of Theorem~\ref{Analytique} relies on an explicit norm convergent series expansions in $z$. Concerning \Cref{AnalyticLog}, the proof relies on \cite[Theorem 3.1]{arendt2000vector}. Using this theorem, we bypass the full characterization of the dual space of $C^\alpha(\sta,\cc)$. Note that Theorem \ref{Analytique} follows also from \cite[Theorem 3.1]{arendt2000vector} but the proof by expansion is straightforward. A proof of \Cref{AnalyticLog} by norm convergent series expansion in $z$ is also possible. Alas, it involves tedious computations and estimations due to the presence of the logarithm and the fact the matrices may be non invertible.
\end{remark}}
These strong smoothness properties of both these perturbations imply some limit theorems for the Markov chain $(\hat x_n)$.
\subsection{Limit theorems}
In \cite{BFPP17}, a major result was the uniqueness of the invariant measure that we recall here.

\begin{theorem}
    Assume {\bf (Pur)} and {\bf (Erg)} hold, there exists a unique probability measure $\nu_{inv}$ on $\sta$ such that $\nu_{inv}\Pi=\nu_{inv}$.
\end{theorem}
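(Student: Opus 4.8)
The statement to be proved is the uniqueness of the invariant probability measure $\nu_{\mathrm{inv}}$ for $\Pi$, under \textbf{(Pur)} and \textbf{(Erg)}. Although this is stated as a theorem from \cite{BFPP17}, it can be recovered directly as a consequence of the quasi-compactness result, Theorem~\ref{QCompact}, proved in this excerpt; this is the approach I would take. First I would observe that any invariant probability measure $\nu$ defines a continuous linear functional on $C^\alpha(\sta,\cc)$ (it is defined on bounded measurable functions, hence in particular on $C^\alpha$), satisfying $\nu(\Pi f)=\nu(f)$ for all $f\in C^\alpha(\sta,\cc)$. In the notation of the decomposition \eqref{eq:decompoquasicompact}, apply $\nu$ to $\Pi f$: since $\rho_\alpha(T)<1$, iterating gives $\nu(\Pi^n f)\to \nu$ evaluated on the projection onto $\mathcal F$, but invariance forces $\nu(\Pi^n f)=\nu(f)$ for every $n$. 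More precisely, $\Pi^n = \sum_{l=0}^{p-1} e^{i\frac{l}{p}2\pi n} f_l\nu_l + T^n$, so $\nu(f) = \nu(\Pi^n f) = \sum_l e^{i\frac{l}{p}2\pi n}\nu(f_l)\,\nu_l(f) + \nu(T^n f)$ for all $n$. Averaging over $n\in\{0,\dots,p-1\}$ (using $\sum_{n=0}^{p-1} e^{i\frac{l}{p}2\pi n}=p\,\delta_{l,0}$) and letting the Cesàro-averaged remainder term vanish because $\rho_\alpha(T)<1$, I would conclude $\nu(f) = \nu(f_0)\,\nu_0(f)$ for all $f\in C^\alpha(\sta,\cc)$.

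Next I would pin down the constant $\nu(f_0)$. Taking $f=\one$ (the constant function, which lies in $C^\alpha$) and noting $\Pi\one=\one$, invariance and the above give $1 = \nu(\one) = \nu(f_0)\,\nu_0(\one)$, so $\nu(f_0)\neq 0$ and is determined: $\nu(f_0) = 1/\nu_0(\one)$ — provided $\nu_0(\one)\neq 0$, which I would check along the way (since $\one$ is $\Pi$-invariant, $\one$ must have a nonzero component along the eigenvalue-$1$ eigenspace $\cc f_0$, i.e. $\one = \nu_0(\one) f_0 + (\text{lower spectral part})$, and applying $\Pi^n$ and letting $n\to\infty$ forces $\one = \nu_0(\one) f_0$, hence $f_0$ is itself constant and $\nu_0(\one)\neq 0$). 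Therefore $\nu(f) = \nu_0(f)/\nu_0(\one)$ for every $f\in C^\alpha(\sta,\cc)$; in particular $\nu$ is uniquely determined on $C^\alpha(\sta,\cc)$. Since $C^\alpha(\sta,\cc)$ is dense in $C(\sta,\cc)$ (for instance $C^1\subset C^\alpha$ and $\sta$ is a compact metric manifold, so $C^1$ is uniformly dense by Stone--Weierstrass) and $\sta$ is a compact metric space, a Borel probability measure is determined by its action on continuous functions; hence $\nu$ is unique as a Borel probability measure.

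Finally I would note existence separately, since the above argument shows uniqueness but assumes an invariant probability measure exists: existence follows by the standard Krylov--Bogolyubov argument, since $\Pi$ maps $C(\sta,\cc)$ to $C(\sta,\cc)$ (Feller property, which follows from $\hat x\mapsto v\cdot\hat x$ being continuous off the null set $\{vx=0\}$ together with dominated convergence, using the finite second moment of $\mu$) and $\sta$ is compact, so any weak-$*$ limit point of the Cesàro averages $\frac1n\sum_{k=0}^{n-1}\delta_{\hat x_0}\Pi^k$ is invariant.

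\textbf{Main obstacle.} The delicate point is not the spectral bookkeeping but the Feller property and the justification that $\nu$, a priori only a bounded linear functional on $C^\alpha$, can be handled via the $C^\alpha$-decomposition \eqref{eq:decompoquasicompact}: one must be careful that the singular transitions (the matrices $v$ with $vx=0$ on a $d(\hat x,\cdot)$-positive set, and more relevantly the lack of $\varphi$-irreducibility) do not obstruct continuity of $\Pi f$ for $f\in C^\alpha$ — but this is already implicitly guaranteed by Theorem~\ref{QCompact}, which asserts $\Pi$ is an endomorphism of $C^\alpha(\sta,\cc)$. The only genuinely new verification is the Feller property for the existence half, and the identification that the top eigenfunction $f_0$ is constant; both are short. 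Everything else reduces to linear algebra in the finite-dimensional space $\mathcal F$ combined with $\rho_\alpha(T)<1$.
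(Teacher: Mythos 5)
There is a genuine gap, and it is circularity rather than a technical slip. Your argument takes as input the decomposition \eqref{eq:decompoquasicompact} with \emph{simple} peripheral eigenvalues, i.e.\ Item~(4) of Theorem~\ref{QCompact}: you need the eigenvalue $1$ to have a one-dimensional eigenspace with rank-one spectral projector $f_0\nu_0$ in order to conclude $\nu(f)=\nu(f_0)\nu_0(f)$. But in this paper the proof of that item (Lemma~\ref{lem:finite subgroup}, and Lemma~\ref{lem:charactrization of cF} via Lemma~\ref{lem:convergence Pim}) is obtained by invoking \cite[Theorem~1.1]{BFPP17}, i.e.\ the convergence in distribution towards \emph{the unique invariant probability measure} $\nu_{inv}$ --- which is precisely the statement you are trying to prove. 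The paper itself does not reprove this theorem; it recalls it from \cite{BFPP17}, where it is established by entirely different (non-spectral) means, and then uses it as an ingredient to identify the peripheral spectrum. So your derivation, read inside the paper's logical structure, assumes its own conclusion. What your argument genuinely establishes is the implication ``simplicity of the eigenvalue $1$ of $\Pi$ on $C^\alpha$ $\Rightarrow$ uniqueness of the invariant probability measure''; the hard content is the converse direction of work, namely proving that the fixed space of $\Pi$ in $C^\alpha(\sta,\cc)$ is spanned by $\one$ (equivalently that $1$ is simple) \emph{without} already knowing uniqueness, and your proposal supplies no argument for that which avoids Item~(4).

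The parts of Theorem~\ref{QCompact} that are proved independently of the uniqueness statement --- Items~(1)--(3), which rest on \cite[Lemma~3.6]{BFPP17} for the contraction of $g$ --- only give you that the fixed space of $\Pi$ in $C^\alpha$ is finite dimensional and that peripheral eigenvalues are semi-simple (Lemma~\ref{lem:semi-simple}); they do not rule out several linearly independent fixed Hölder functions, hence several invariant probability measures. Closing this gap is exactly where {\bf (Erg)} and the purification analysis of \cite{BFPP17} enter, and it cannot be reduced to ``linear algebra in $\mathcal F$ combined with $\rho_\alpha(T)<1$'' as your closing paragraph suggests. The secondary points of your proposal (the Cesàro averaging of $\nu(T^nf)$, the identification $\one\in\cc f_0$, density of $C^\alpha$ in $C^0$, and existence via Krylov--Bogolyubov using the Feller property) are fine, but they are not where the difficulty lies.
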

In \cite{BFP23} different limit theorems following from the results of \cite{BFPP17} were proved. Using Theorems~\ref{Analytique} and~\ref{AnalyticLog}, we can prove different type of limit theorems. The only theorem already proved in \cite{BFP23} is the central limit theorem for the empirical mean of quantum trajectories. Here we complement it with a Berry-Esseen bound quantifying the rate of convergence.

In all these theorems, $\mathcal N(0,\sigma^2)$ is the probability measure of a normal centered random variable of variance $\sigma^2$. If $\sigma^2=0$, $\mathcal N(0,\sigma^2)$ stands for the Dirac measure in $0$. If $X\sim \mathcal N(0,\sigma^2)$, the notation $\mathcal N(0,\sigma^2)(\phi)$ will stand for $\mathcal N(0,\sigma^2)(\phi)=\mathbb E[\phi(X)]$, for any integrable functions $\phi$. We shall also use 
$\mathcal{N}(0,\sigma^2)((-\infty,u])=\mathbb P[X\leq u]$.

\subsubsection{Quantum trajectory empirical mean}
For any function $h:\sta\to\rr$ and $n\in\mathbb N$, let
$$S_n(h)=\sum_{l=1}^n h(\hat x_l).$$
Then, for any $h$ Hölder continuous function, $(S_n(h))$ verifies a central limit theorem with a speed of convergence given by a Berry-Esseen bound.
\begin{theorem}[Central Limit Theorem]\label{thm:clt1}
    Assume {\bf (Pur)} and {\bf (Erg)} hold. Let $h\in C^\alpha(\sta,\mathbb R)$ such that $\ee_{\nu_{inv}}[h]=0$. Then there exists $\sigma^2 \geq 0$ such that:
    \begin{enumerate}
        \item For every $\phi \in C_b(\rr)$ and every probability measure $\nu$ on $\sta$,
        $$ \underset{n \to \infty}{\lim} \; \left | \ee_{\nu} \left [\phi \left ( \frac{S_n(h)}{\sqrt{n}} \right ) \right] - \mathcal{N}(0,\sigma^2) (\phi) \right| = 0,$$
        \item $\sigma^2 = \underset{n}{\lim} \; \frac{1}{n}\ee_{\nu_{inv}}[S_n(h)^2].$
        \item If $\sigma^2 >0$, there exists $C>0$ such that for every probability measure $\nu$ on $\sta$:
        $$ \underset{u \in \rr}{\sup} \; | \pp_{\nu}[S_n \leq u \sigma \sqrt{n}] - \mathcal{N}(0,1)((-\infty,u])| \leq  \frac{C}{\sqrt{n}}.$$
    \end{enumerate}
\end{theorem}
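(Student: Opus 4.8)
The plan is to derive the central limit theorem and Berry--Esseen bound as a consequence of the analyticity of the tilting $z\mapsto\Pi_z$ (Theorem~\ref{Analytique}) together with the spectral gap of $\Pi=\Pi_0$ (Theorem~\ref{QCompact}), following the Nagaev--Guivarc'h perturbation method. The starting observation is that for $z=it$ with $t\in\rr$,
$$\ee_\nu\!\left[e^{it S_n(h)}\right]=\nu\big(\Pi_{it}^n\,\one\big),$$
since each factor $e^{ith(\hat x_l)}$ is produced by one application of the tilted kernel. Because $\Pi_0$ has a simple dominant eigenvalue $1$ with spectral projector $f_0\nu_0=\one\otimes\nu_{inv}$ (here $f_0=\one$, $\nu_0=\nu_{inv}$, and the rest of the peripheral spectrum does not interfere because for real characters the peripheral group collapses — more carefully, one restricts attention to a small disc around $1$ and uses that $\rho_\alpha$ of the rest of $\Pi$ is $<1$), standard analytic perturbation theory gives, for $t$ in a neighbourhood of $0$, a decomposition
$$\Pi_{it}=\lambda(it)\,P_{it}+N_{it},$$
where $t\mapsto\lambda(it)$, $t\mapsto P_{it}$, $t\mapsto N_{it}$ are analytic, $\lambda(0)=1$, $P_0=\one\otimes\nu_{inv}$, $P_{it}N_{it}=N_{it}P_{it}=0$, and $\rho_\alpha(N_{it})\leq\kappa<1$ uniformly for small $t$. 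Hence $\ee_\nu[e^{itS_n(h)}]=\lambda(it)^n\,\nu(P_{it}\one)+\nu(N_{it}^n\one)$, and the remainder decays geometrically in $n$ uniformly on a fixed neighbourhood of $t=0$.

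The next step is the Taylor expansion of the leading eigenvalue. Differentiating $\Pi_{z}=\Pi(e^{zh}\cdot)$ one computes $\dot\Pi_0 f=\Pi(hf)$ and $\ddot\Pi_0 f=\Pi(h^2 f)$, and the usual second-order perturbation formulas for a simple eigenvalue give
$$\Lambda(z):=\log\lambda(z),\qquad \Lambda(0)=0,\quad \Lambda'(0)=\nu_{inv}(h)=\ee_{\nu_{inv}}[h]=0,$$
the last equality being the hypothesis, and
$$\Lambda''(0)=\sigma^2:=\nu_{inv}(h^2)+2\sum_{m\ge 1}\nu_{inv}\!\big(h\,\Pi^m h\big),$$
which is nonnegative (it is a limit of variances). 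Thus $\lambda(it)^n=\exp\big(n\Lambda(it)\big)=\exp\big(-\tfrac{t^2}{2}\sigma^2 n+n\,O(t^3)\big)$. Plugging $t=u/\sqrt n$ and using $\nu(P_{it}\one)\to\nu_{inv}(\one)=1$ gives pointwise convergence of characteristic functions $\ee_\nu[e^{iuS_n(h)/\sqrt n}]\to e^{-\sigma^2u^2/2}$, hence part~(1) by Lévy's continuity theorem (with the degenerate convention when $\sigma^2=0$); and the identification $\sigma^2=\lim_n\tfrac1n\ee_{\nu_{inv}}[S_n(h)^2]$ in part~(2) follows by matching the second derivative at $0$ of $t\mapsto\ee_{\nu_{inv}}[e^{itS_n(h)}]$ with the expansion, or directly from stationarity and the geometric decay of correlations $\nu_{inv}(h\,\Pi^m h)$ guaranteed by the spectral gap. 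For the Berry--Esseen bound in part~(3), assuming $\sigma^2>0$, I would use the Berry--Esseen smoothing inequality (Esseen's lemma): $\sup_u|\pp_\nu[S_n\le u\sigma\sqrt n]-\mathcal N(0,1)((-\infty,u])|$ is controlled by $\int_{-T\sqrt n}^{T\sqrt n}\big|\tfrac1t\big(\ee_\nu[e^{itS_n(h)/(\sigma\sqrt n)}]-e^{-t^2/2}\big)\big|\,\d t+O(1/\sqrt n)$ for a fixed $T>0$; on the range $|t|\le\varepsilon\sqrt n$ the perturbative estimate $\ee_\nu[e^{itS_n(h)/(\sigma\sqrt n)}]=\lambda(it/(\sigma\sqrt n))^n\nu(P\one)+O(\kappa^n)$ together with $\lambda(iu)^n=e^{-u^2/2}(1+O(u^3/\sqrt n))$ and analyticity of $u\mapsto\nu(P_{iu/(\sigma\sqrt n)}\one)=1+O(1/\sqrt n)$ yields an integrand bounded by $\tfrac{C}{\sqrt n}(1+t^2)e^{-t^2/4}$, while on $\varepsilon\sqrt n\le|t|\le T\sqrt n$ one needs $\rho_\alpha(\Pi_{it})<1$ uniformly, i.e.\ the absence of other peripheral eigenvalues of $\Pi_{it}$ for $t$ in a compact set away from $0$ — and this is where the argument is genuinely model-dependent.

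The main obstacle, and the step requiring the specifics of quantum trajectories rather than a black-box citation, is precisely this last point: showing that for $t$ in any compact subset of $\rr\setminus\{0\}$ the tilted operator $\Pi_{it}$ has spectral radius strictly less than $1$ (a ``no-peripheral-eigenvalue away from $0$'' / aperiodicity statement for the Fourier perturbation). In the i.i.d.\ random-matrix setting this follows from arguments on the support of the invariant measure and strong irreducibility, both of which are unavailable here — only the weaker {\bf (Erg)} and {\bf (Pur)} are assumed. I expect the proof to handle this by a compactness-plus-continuity argument: the map $t\mapsto\rho_\alpha(\Pi_{it})$ is upper semicontinuous, so it suffices to rule out $\rho_\alpha(\Pi_{it})=1$ at each fixed $t\ne0$; and an eigenvalue $e^{i\theta}$ of modulus one for $\Pi_{it}$ with eigenfunction $g$ forces $|g|$ to be $\Pi$-subinvariant hence (by quasi-compactness and simplicity of $1$) constant, and then the modulus-one equation $e^{ith(v\cdot\hat x)}g(v\cdot\hat x)=e^{i\theta}g(\hat x)$ $\mu\|vx\|^2\d\mu$-a.e., iterated along trajectories, must be shown incompatible with {\bf (Pur)} and {\bf (Erg)} unless $t$ realises a genuine lattice/cohomological degeneracy of $h$ (in which case $\sigma^2$-positivity of part~(3) is the escape hatch). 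Making this rigorous — essentially a cocycle/coboundary dichotomy for $h$ along the quantum trajectory — is the technical heart; everything else is the routine machinery of \cite{HeHe01} applied to the analytic family from Theorem~\ref{Analytique}.
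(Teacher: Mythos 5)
Your core argument --- writing $\ee_\nu[e^{itS_n(h)}]=\nu(\Pi_{it}^n\one)$, isolating the simple eigenvalue $1$ of $\Pi_0$, expanding the perturbed eigenvalue $\lambda(it)$ to second order, concluding (1) by L\'evy continuity and (2) by matching second derivatives / summing correlations --- is exactly the Nagaev--Guivarc'h machinery the paper uses, except that the paper invokes it in packaged form: its proof consists of checking that Theorems~\ref{QCompact} and~\ref{Analytique} furnish hypotheses $\mathcal H[\infty]$ and $\widehat{\mathcal D}$ of \cite{HeHe01} and then quoting Theorems A and B there, with Lemma~\ref{lem:holder norm probability} supplying the uniformity of the Berry--Esseen constant in $\nu$ (a point you leave implicit; all your error terms depend on $\nu$ only through $\|\nu\|_\alpha=1$). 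Two remarks on what you gloss over: the peripheral spectrum of $\Pi$ may be a nontrivial group ($p>1$ in Theorem~\ref{QCompact}), and ``for real characters the peripheral group collapses'' is not an argument --- the relevant fact is that in the decomposition \eqref{eq:decompoquasicompact} one has $\nu_l\Pi=e^{2i\pi l/p}\nu_l$, hence $\nu_l(\one)=0$ for $l\neq 0$, so the other peripheral branches enter $\nu(\Pi_{it}^n\one)$ with coefficients that vanish at $t=0$; this is harmless for (1) and (2) but must be tracked (or outsourced to \cite{HeHe01}) in (3).

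The genuine gap is in part (3), and it is self-inflicted: the ``main obstacle'' you identify --- proving $\rho_\alpha(\Pi_{it})<1$ uniformly for $t$ in compact subsets of $\rr\setminus\{0\}$ via a cocycle/coboundary dichotomy --- is not needed for a Berry--Esseen bound of order $n^{-1/2}$, and since you explicitly leave it unproved, your write-up of (3) stops exactly where you placed the difficulty. In Esseen's smoothing inequality you may take the cutoff $T=\varepsilon\sigma\sqrt n$ with $\varepsilon>0$ small but fixed: the smoothing penalty is then $O\bigl(1/(\varepsilon\sqrt n)\bigr)$, and on $|t|\le\varepsilon\sigma\sqrt n$ only the local data at $0$ are used, namely $\lambda(is)=1-\tfrac{\sigma^2}{2}s^2+O(|s|^3)$, analyticity of $s\mapsto P_{is}$ and $N_{is}$, and $\rho_\alpha(N_{is})\le\kappa<1$ for $|s|$ small; no information about $\Pi_{it}$ away from $t=0$ enters. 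This is how \cite[Theorem~B]{HeHe01} is proved, and the same cutoff device is used explicitly in the paper's proof of Theorem~\ref{thm:clt log} (there with $\tau=n^{-1/4}$ only because of the extra passage from $\log\|W_nx\|$ to $\log\|W_n\|$). Note also that the non-arithmeticity statement you propose as the ``technical heart'' can genuinely fail while the theorem still holds (already for i.i.d.\ lattice variables one has $|\ee[e^{itX}]|=1$ at some $t\neq0$ with $\sigma^2>0$, yet Berry--Esseen at rate $n^{-1/2}$ is true), so positivity of $\sigma^2$ is not an ``escape hatch'' for that route; such a condition is only relevant for local limit theorems or Edgeworth expansions beyond order $n^{-1/2}$, neither of which is claimed here.
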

The proof of the theorem is a direct consequence of Theorem~\ref{Analytique} and \cite[Theorems A and B]{HeHe01}. It is provided in Section~\ref{sec:proof clt}.

The sequence $(S_n(h))$ also verifies a restricted large deviation principle.
\medskip

\begin{theorem}[Restricted Large Deviation Principle]\label{thm:ldp empirical}
    Assume {\bf (Pur)} and {\bf (Erg)} hold. Then, for any $h\in C^\alpha(\sta,\mathbb R)$ and any probability measure $\nu$ on $\sta$, there exist $\theta_-<0<\theta_+$ and an analytic convex function $\Lambda:(\theta_-,\theta_+)\to \rr$ such that, 
    \begin{enumerate}
        \item $\forall \theta\in (\theta_-,\theta_+)$, $$\lim_{n\to \infty}\tfrac1n\log \ee_\nu(\exp(\theta S_n(h)))=\Lambda(\theta)=\log(\rho_\alpha(\Pi_\theta)).$$
        \item For any open subset $A$ of $(\partial_\theta^+\Lambda(\theta_-),\partial_\theta^-\Lambda(\theta_+))$,
        $$ \lim_{n\to\infty}\tfrac1n\log \pp_\nu\left(\tfrac1nS_n(h)\in A\right)= -\inf_{x\in {A}} I(x),$$
        where the rate function $I$ is given by
        $$I(x)=\sup_{\theta\in (\theta_-,\theta_+)} \theta x- \Lambda(\theta).$$
    \end{enumerate}
\end{theorem}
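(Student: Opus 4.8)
The plan is to derive both parts from the analyticity of $z \mapsto \Pi_z$ (Theorem~\ref{Analytique}) together with the spectral gap for $\Pi = \Pi_0$ (Theorem~\ref{QCompact}), following the standard Ruelle--Perron--Frobenius perturbative scheme as in \cite[Chapter V]{HeHe01} or \cite{Gui}. First I would observe that $\Pi_0 = \Pi$ has a simple isolated eigenvalue $1$ with spectral radius $1$ and the rest of the spectrum contained in a disk of radius $<1$ (after possibly noting that the peripheral eigenvalues $e^{i2\pi l/p}$, $l\ne 0$, are also simple and isolated). Since $z\mapsto \Pi_z$ is analytic on $\cc$ as a map into the bounded operators on $C^\alpha(\sta,\cc)$, analytic perturbation theory (Kato) gives a complex neighborhood $U$ of $0$ on which $\Pi_z$ has a simple isolated eigenvalue $\lambda(z)$, analytic in $z$, with $\lambda(0)=1$, together with analytic rank-one spectral projector $P_z = f_z \otimes \nu_z$ and the bound $\rho_\alpha(\Pi_z(I-P_z)) < |\lambda(z)|$ (uniformly on a smaller neighborhood), so that $\Pi_z^n = \lambda(z)^n P_z + R_z^n$ with $\rho_\alpha(R_z) < |\lambda(z)|$. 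Restricting to real $\theta$ in an interval $(\theta_-,\theta_+)\subset U\cap\rr$, one has for any probability measure $\nu$ and any fixed $g\in C^\alpha$ with $\nu_{inv}(g)\ne 0$ (e.g. $g\equiv 1$),
$$\ee_\nu(\exp(\theta S_n(h))) = \ee_\nu(e^{\theta h(\hat x_1)}\cdots e^{\theta h(\hat x_n)}) = \nu\big(\Pi_\theta^n \one\big) = \lambda(\theta)^n\,\nu(f_\theta)\,\nu_\theta(\one) + \nu(R_\theta^n \one),$$
using the semigroup identity $\Pi_\theta^n f = \Pi(e^{\theta h}\Pi(e^{\theta h}\cdots))$ which unwinds exactly to the tilted expectation (here $\one$ denotes the constant function $1$, writing $\one$ for $\mathbf 1$). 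Setting $\Lambda(\theta) = \log\lambda(\theta)$, which is well-defined, real and analytic for $\theta$ near $0$ since $\lambda(0)=1>0$ and $\lambda$ is continuous, part (1) follows provided the prefactor $\nu(f_\theta)\nu_\theta(\one)$ does not vanish near $\theta=0$ — which holds by continuity since at $\theta=0$ it equals $\nu(f_0)\nu_0(\one) = \nu_{inv}(\one)\cdot 1 = 1$ (after the usual normalization $f_0 = \one$, $\nu_0 = \nu_{inv}$), and one also needs the exponential remainder $\rho_\alpha(R_\theta) < \lambda(\theta)$ to ensure $\frac1n\log$ of the correction term is negligible; shrinking $\theta_\pm$ if necessary secures this. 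The identification $\Lambda(\theta) = \log\rho_\alpha(\Pi_\theta)$ is then immediate since $\lambda(\theta)$ is the unique eigenvalue of maximal modulus for $\theta$ real near $0$ (positivity of the kernel $\Pi$ and a Perron--Frobenius argument on the real part, or simply continuity from $\rho_\alpha(\Pi_0)=1$).

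For part (2), once $\Lambda$ is established to be analytic and convex on $(\theta_-,\theta_+)$ — convexity being automatic from $\Lambda(\theta) = \lim \frac1n\log\ee_\nu(e^{\theta S_n(h)})$ as a limit of (normalized) log-moment-generating functions, each convex — I would invoke the restricted G\"artner--Ellis theorem: the pointwise limit of $\frac1n\log\ee_\nu(e^{\theta S_n(h)})$ exists and is finite and differentiable (indeed analytic) on the open interval $(\theta_-,\theta_+)$, hence for any $x$ in the interior of the range of $\Lambda'$, namely $x\in(\partial_\theta^+\Lambda(\theta_-),\partial_\theta^-\Lambda(\theta_+))$, the standard exponential-tilting / change-of-measure argument gives the matching upper and lower bounds $\lim_n \frac1n\log\pp_\nu(\frac1n S_n(h)\in A) = -\inf_{x\in A} I(x)$ for open $A$ within that interval, with $I(x) = \sup_{\theta\in(\theta_-,\theta_+)}(\theta x - \Lambda(\theta))$ the Legendre transform. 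One should cite the precise form of the restricted G\"artner--Ellis statement (e.g. \cite[Theorem C]{HeHe01} or a Dembo--Zeitouni-type local version) rather than reprove it. The convexity and analyticity of $\Lambda$ transfer the differentiability hypothesis of G\"artner--Ellis for free.

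The main obstacle I anticipate is \emph{not} the perturbation-theory machinery, which is routine once Theorem~\ref{Analytique} is in hand, but rather two bookkeeping points that must be handled carefully in this non-i.i.d., non-invertible setting. First, one must verify that the tilted operator $\Pi_\theta$ genuinely satisfies a \emph{uniform} spectral-gap estimate $\rho_\alpha(R_\theta) \le \kappa < \lambda(\theta)$ on a fixed interval around $0$, and that the eigen-elements $f_\theta,\nu_\theta$ stay uniformly bounded — this is where the Ionescu-Tulcea--Marinescu / Hennion--Hervé framework of \cite{HeHe01} is used, requiring the Doeblin--Fortet (Lasota--Yorke) inequalities of Theorem~\ref{QCompact} to persist under the tilt; analyticity alone gives a neighborhood but one wants it independent of the auxiliary data, and uniformity in the initial measure $\nu$ must be checked (it is, since $\nu$ only enters through $\nu(\cdot)$ applied to a uniformly bounded family). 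Second, the G\"artner--Ellis conclusion is only \emph{restricted} precisely because $\Lambda$ is known only on the bounded interval $(\theta_-,\theta_+)$, so the rate function $I$ and the set of attainable $x$ must be stated with the correct one-sided derivatives $\partial_\theta^+\Lambda(\theta_-)$, $\partial_\theta^-\Lambda(\theta_+)$; care is needed that these are well-defined (finite) limits, which follows from convexity of $\Lambda$ on the closed-from-inside interval. Both issues are standard in the literature but the paper's emphasis on weaker irreducibility and non-invertibility means the underlying estimates feeding Theorem~\ref{Analytique} do the real work, and here we simply assemble the consequences.
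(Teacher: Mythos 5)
Your overall route is the paper's: use Theorems~\ref{QCompact} and~\ref{Analytique} to identify $\Lambda(\theta)$ with $\log\rho_\alpha(\Pi_\theta)$ via perturbation theory, get convexity as a limit of convex functions and analyticity from Kato, and conclude Item~(2) by a restricted G\"artner--Ellis theorem (the paper verifies hypotheses $\mathcal H[\infty]$ and $\widetilde{\mathcal D}_{loc}$ of \cite{HeHe01}, quotes \cite[Lemma~VI.5]{HeHe01} for Item~(1), and uses \cite[Theorem~A.5]{JOPP11} for Item~(2)). But there is a genuine gap in the way you set up the spectral expansion. You write $\Pi_z^n=\lambda(z)^nP_z+R_z^n$ with $\rho_\alpha\bigl(\Pi_z(I-P_z)\bigr)<|\lambda(z)|$ near $z=0$, i.e.\ you treat $1$ as the unique eigenvalue of maximal modulus of $\Pi_0$. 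Theorem~\ref{QCompact} only gives that the peripheral spectrum is the full group $\{e^{i2\pi l/p}\}_{l=0}^{p-1}$ of simple eigenvalues, and under {\bf (Pur)}--{\bf (Erg)} the period $p$ may exceed $1$ (you even note these other eigenvalues in passing). Then $\rho_\alpha\bigl(\Pi_0(I-P_0)\bigr)=1=|\lambda(0)|$, so your remainder is not exponentially subdominant, and for real $\theta$ the perturbed branches $\lambda_l(\theta)$ of the other peripheral eigenvalues generically keep the same modulus as $\lambda_0(\theta)$ (for the deterministic two-state periodic chain the tilted matrix has eigenvalues $\pm e^{\theta/2}$). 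Consequently ``$\lambda(\theta)$ is the unique eigenvalue of maximal modulus'' is false in general, and ``continuity from $\rho_\alpha(\Pi_0)=1$'' does not identify $\lambda(\theta)$ with $\rho_\alpha(\Pi_\theta)$; your argument as written only controls one of $p$ equally large contributions to $\nu(\Pi_\theta^n\one)$ and does not even show the limit in Item~(1) exists.

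The gap is repairable, but it needs an argument you did not give. Keep all $p$ peripheral branches: the upper bound $\limsup_n\frac1n\log\nu(\Pi_\theta^n\one)\le\log\rho_\alpha(\Pi_\theta)$ follows from $\nu(\Pi_\theta^n\one)\le\|\Pi_\theta^n\|_\alpha\|\one\|_\alpha$ together with Lemma~\ref{lem:holder norm probability}; for the matching lower bound use positivity of $\Pi_\theta$ and the eigenfunction $f_\theta$ of the branch through $1$, which is strictly positive for small real $\theta$ because it is close to $\one$ in sup norm, so that
\begin{equation*}
\nu(\Pi_\theta^n\one)\;\ge\;\|f_\theta\|_\infty^{-1}\,\nu(\Pi_\theta^n f_\theta)\;=\;\|f_\theta\|_\infty^{-1}\,\lambda_0(\theta)^n\,\nu(f_\theta),
\end{equation*}
and finally identify $\lambda_0(\theta)=\rho_\alpha(\Pi_\theta)$ by dominating any eigenfunction $g$ of an eigenvalue outside the essential disc by $\|g/f_\theta\|_\infty f_\theta$ in sup norm. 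This positivity bookkeeping (or the alternative of quoting \cite[Lemma~VI.5]{HeHe01} after checking its hypotheses, as the paper does, noting that $\nu(\xi)=0$ and $\sigma^2>0$ are not needed there) is exactly what your proposal is missing; the remaining steps --- convexity, analyticity of $\Lambda$, and the restricted G\"artner--Ellis deduction of Item~(2) --- coincide with the paper.
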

Given Theorem~\ref{Analytique}, the proof of the above theorem is a consequence of Kato's perturbation Theory \cite{kato2013perturbation} and G\"artner--Ellis Theorem (see \cite{dembo2009large}). It is provided in Section~\ref{sec:proof ldp}. Note that we call this theorem a restricted large deviation principle because it is restricted to sets $A$ in $(\partial_\theta^+\Lambda(\theta_-),\partial_\theta^-\Lambda(\theta_+))$ and we have no control on $\theta_\pm$. 


\subsubsection{Lyapunov exponent} \label{sec:results W_n} 

For any $n\in \nn$, recall that
$$W_n=V_n\dotsb V_1.$$
The quantity $\log\|W_n\|$ is called the Lyapunov exponent and is one of the main focus of research in random products of matrices -- see \cite{BL85} for example. In \cite[Proposition 4.3]{BFPP17}, in the case $E=\mathbb C^k$ it was proved that, if $\int \|v\|^2\log\|v\|\d \mu(v)<\infty$, there exists $\gamma\in[-\infty,\infty[$ such that
$$\lim_{n\to\infty}\tfrac1n\log \|W_nx_0\|=\lim_{n\to\infty}\tfrac1n\log \|W_n\|=\gamma$$
almost surely for any law of the initial state $\hat x_0$. Theorem~\ref{AnalyticLog} leads to finer limit theorems without the assumption $E=\mathbb C^k$.

First, a central limit theorem holds with Berry-Esseen like convergence bounds.
\begin{theorem}[Central Limit Theorem]\label{thm:clt log}
    Assume {\bf (Pur)} and {\bf (Erg)} hold. Assume there exist $\tau\in (0,2)$ and $\delta>0$ such that $\int \|v\|^{2-\tau}+\|v\|^{2+\delta}\d\mu(v)<\infty$.
    Then, there exists $\sigma^2 \geq 0$ such that for every probability measure $\nu$ on $\sta$,
    \begin{enumerate}
        \item Almost surely,
        $$\lim_{n\to\infty}\tfrac1n\log\|W_nx_0\|=\lim_{n\to\infty}\tfrac1n\log\|W_n\|=\int_{\mathrm{M}_d(\cc)\times\sta}\log\|vx\|\, \|vx\|^2\d\mu(v)\d\nu_{inv}(\hat x)=\gamma.$$
        \item For every bounded continuous function $\phi$,
        $$ \lim_{n \to \infty} \ee_{\nu} \left [\phi \left ( \frac{\log \|W_n x\|-n \gamma}{\sqrt{n}} \right ) \right ]=\lim_{n \to \infty} \ee_{\nu} \left [\phi \left ( \frac{\log \|W_n\|-n \gamma}{\sqrt{n}} \right ) \right ]=\mathcal N(0,\sigma^2)(\phi).$$
        \item If $\sigma^2 >0$, then there exists $C>0$ independent of $\nu$ such that
        $$ \underset{u \in \rr}{\sup} \; | \pp_{\nu}[\log \|W_n x\| -n \gamma \leq u \sigma \sqrt{n}] - \mathcal{N}(0,1)((-\infty,u])| \leq \frac{C}{\sqrt{n}}.$$
        and there exists $\tilde C>0$ independent of $\nu$ such that
        $$ \underset{u \in \rr}{\sup} \; | \pp_{\nu}[\log \|W_n\| -n \gamma \leq u \sigma \sqrt{n}] - \mathcal{N}(0,1)((-\infty,u])| \leq \frac{\tilde C}{n^{\frac14}}.$$
    \end{enumerate}
\end{theorem}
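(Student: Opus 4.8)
\textbf{Plan of proof for Theorem~\ref{thm:clt log}.}

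The strategy is to transfer the classical Nagaev--Guivarc'h spectral method from the tilted operator $\Gamma_z$ to the observable $\log\|W_n x\|$, and then upgrade from $\|W_n x\|$ to $\|W_n\|$ via the purification assumption. First I would record the identity
$$\ee_\nu\big[e^{z\log\|W_n x\|}\big]=\ee_\nu\big[e^{z\log\|W_n x\|}\big]=\nu\big(\Gamma_z^n \one\big),$$
valid for $\Re(z)\in(-\tau,\delta)$, which follows directly from the definition \eqref{eq_defPnu} of $\pp_\nu$ and the definition of $\Gamma_z$ by unfolding the product $W_n=V_n\cdots V_1$ and telescoping the norms $\|W_n x\|=\prod_{j=1}^n \|V_j\cdots V_1 x\|/\|V_{j-1}\cdots V_1 x\|$. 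Combined with Theorem~\ref{AnalyticLog}, which gives analyticity of $z\mapsto\Gamma_z$ on the strip with $\Gamma_0=\Pi$, and with the spectral gap of $\Pi$ from Theorem~\ref{QCompact}, Kato's analytic perturbation theory yields, for $z$ in a complex neighbourhood of $0$, a decomposition $\Gamma_z=\lambda(z)\,\Theta_z+R_z$ with $\lambda(\cdot)$ analytic, $\lambda(0)=1$, $\Theta_z$ a rank-one projector depending analytically on $z$, $\Theta_0=f_0\otimes\nu_{inv}$, and $\rho_\alpha(R_z)<\lambda(0)-\epsilon$ uniformly near $0$. This is exactly the setting of \cite[Theorems A and B]{HeHe01}, which I would invoke with $(V_n,\hat x_n)$ as the underlying Markov chain and $\log\|vx\|$ as the cocycle: it directly gives (1), (2), and the first Berry--Esseen bound in (3) for $\log\|W_n x\|$, with $\gamma=\lambda'(0)$ and $\sigma^2=\lambda''(0)-\lambda'(0)^2$; matching $\gamma$ with the stated integral $\int\log\|vx\|\,\|vx\|^2\d\mu(v)\d\nu_{inv}(\hat x)$ is a short computation differentiating $\nu(\Gamma_z\one)$ at $z=0$ and using $\nu_{inv}\Pi=\nu_{inv}$.

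The main obstacle is the passage from $\log\|W_n x\|$ to $\log\|W_n\|$, and in particular the weaker exponent $n^{-1/4}$ in the second Berry--Esseen bound. Here I would exploit purification: under \pur, the ratio $\|W_n\|/\|W_n x\|$ is controlled because the image of $W_n$ contracts to a line, so $\big|\log\|W_n\|-\log\|W_n x\|\big|$ is small with high probability. Quantitatively, one expects a bound of the form $\pp_\nu\big(\log\|W_n\|-\log\|W_n x\|>t\big)\le C e^{-c t}$ (or at worst a polynomial tail), coming from the exponential decay of the diameter of $W_n\cdot\sta$ established en route to uniqueness of $\nu_{inv}$ in \cite{BFPP17} (the purification/contraction estimates). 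Writing $\pp_\nu[\log\|W_n\|-n\gamma\le u\sigma\sqrt n]$, inserting $\log\|W_n x\|$, and splitting according to whether the discrepancy exceeds some threshold $t_n$, I would optimise: the Berry--Esseen bound for $\log\|W_n x\|$ contributes $C/\sqrt n$, the smoothing by a window of width $t_n/(\sigma\sqrt n)$ contributes $O(t_n/\sqrt n)$ to the Kolmogorov distance, and the tail event contributes the (exponentially or polynomially) small probability; choosing $t_n$ to balance these gives $n^{-1/4}$ if only a sub-Gaussian or polynomial control on the discrepancy moment is available, and would give $n^{-1/2}\log n$ under an exponential tail. I expect the honest statement to rest on a second-moment (or slightly better) bound $\ee_\nu\big[(\log\|W_n\|-\log\|W_n x\|)^2\big]=O(1)$, which by Markov converts a window of size $t_n$ into an error $O(t_n^{-2})$, and then $t_n\asymp n^{1/4}$ balances $t_n/\sqrt n$ against $t_n^{-2}$ to produce $n^{-1/4}$.

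For the almost-sure statement (1), beyond the $L^1$/spectral identification of $\gamma$, I would note it follows from Kingman's subadditive ergodic theorem applied to $n\mapsto\log\|W_n\|$ under $\pp_{\nu_{inv}}$ (the sequence $\log\|W_{n+m}\|\le\log\|V_{n+m}\cdots V_{n+1}\|+\log\|W_n\|$ is subadditive, and the shift on $\out$ is measure-preserving for the stationary chain), together with the fact, from \pur\ again, that $\log\|W_n x\|$ and $\log\|W_n\|$ have the same almost-sure growth rate; the extension to arbitrary initial law $\nu$ follows because the tail $\sigma$-algebra controlling the limit is trivial, or more simply because $\pp_\nu$ and $\pp_{\nu_{inv}}$ are mutually absolutely continuous on each $\outalg_n$ with densities bounded in a way that does not affect the $\tfrac1n\log$ limit. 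I would present this as a corollary of the $L^2$ convergence plus Borel--Cantelli once the concentration estimates above are in hand, rather than re-deriving the ergodic theorem. The remaining items (2) and the first bound of (3) require no new ingredients beyond \cite[Theorems A and B]{HeHe01} once $\Gamma_z$ is known analytic with a spectral gap at $z=0$.
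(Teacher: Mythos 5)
Your overall architecture for Items (2) and the first bound of (3) is the paper's: the identity $\ee_\nu[e^{z\log\|W_nx_0\|}]=\nu(\Gamma_z^n\one)$, Theorem~\ref{AnalyticLog} plus the quasi-compactness of $\Gamma_0=\Pi$ (Theorem~\ref{QCompact}), and then Theorems A and B of \cite{HeHe01} together with Lemma~\ref{lem:holder norm probability} to make the constant independent of $\nu$. Two inaccuracies in the way you set this up, though. First, when the period $m>1$ the peripheral spectrum of $\Pi$ is the whole group $\{e^{2i\pi l/m}\}$ of simple eigenvalues, so your decomposition $\Gamma_z=\lambda(z)\Theta_z+R_z$ with $\rho_\alpha(R_z)$ uniformly below $|\lambda(z)|$ near $z=0$ is simply false; one must keep all peripheral branches and verify the hypotheses $\mathcal H[\infty]$ and $\widehat{\mathcal D}$ of \cite{HeHe01} as stated there. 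Second, $\Gamma_{it}$ is not of the form $w_t(x,y)Q(x,\d y)$ assumed in \cite{HeHe01}; the paper has to point out that the only place this form is used is their Lemma~V.9 and that its proof adapts to $\Gamma_{it}(\hat x,A)=\int\one_A(v\cdot\hat x)e^{it\log\|vx\|}\|vx\|^2\d\mu(v)$ — your proposal passes over this compatibility issue.

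The genuine gaps are in Item (1) and in the source of the discrepancy bound. For (1), Kingman under $\pp_{\nu_{inv}}$ followed by a transfer to arbitrary $\nu$ via ``mutual absolute continuity on each $\outalg_n$'' or ``tail triviality'' does not work: $\pp_\nu$ is in general \emph{not} absolutely continuous with respect to $\pp_{\nu_{inv}}$ (take $\nu=\delta_{\hat x}$ with $x\notin E$ while $\nu_{inv}$ is carried by $\P(E)$), and your fallback ``$L^2$ convergence plus Borel--Cantelli'' gives only $O(1/n)$, non-summable, tails. The paper instead gets almost-sure convergence for \emph{every} $\nu$ from the convergence of $\tfrac1n\log\ee_\nu(e^{s\log\|W_nx_0\|})$ to the analytic $\Upsilon$ and \cite[Theorem~II.6.3]{ellis2006entropy}, and identifies $\gamma$ by proving stationarity of the extended chain $(V_n,\hat x_n)$ under $\pp_{\nu_{inv}}$ and applying Birkhoff with the integrability supplied by Lemma~\ref{lemm:l1} (your identification $\gamma=\lambda'(0)=\nu_{inv}(\partial_z\Gamma_z|_{0}\one)$ is an acceptable alternative for that last step). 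For the passage from $\log\|W_nx\|$ to $\log\|W_n\|$, you only conjecture a tail or moment bound and attribute it to purification-driven contraction of $W_n\cdot\sta$; that mechanism is neither proved nor needed. The paper's bound is elementary and uses no {\bf (Pur)}: since $\|W_nx\|\le\|W_n\|$ and $\sup_{u\in(0,1]}u^2|\log u|\le 1$, one gets $\ee_\nu\bigl|\log(\|W_nx\|/\|W_n\|)\bigr|\le\int\|W_n\|^2\d\mu^{\otimes n}\le k$ uniformly in $n$; inserted at the characteristic-function level into Esseen's inequality with cutoff $\sigma\tau\sqrt n$ and $\tau=n^{-1/4}$, this yields the $n^{-1/4}$ rate. (Incidentally, your balancing with a second moment would optimize to $n^{-1/3}$, not $n^{-1/4}$; and for Item (2) the paper passes to $\log\|W_n\|$ by Slutsky using $\|W_nx\|/\|W_n\|\to c(x)>0$ a.s.\ from \cite[Proposition~4.3]{BFPP17}, which a uniform moment bound would also give.)
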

As for Theorem~\ref{thm:clt1}, this theorem is a corollary to Theorem~\ref{AnalyticLog} and is proved in Section~\ref{sec:proof clt}. The subtlety added compared to Theorem~\ref{thm:clt1} is passing from $\log\|W_nx\|$ to $\log\|W_n\|$. That is the reason why we only have a $n^{-\frac14}$ convergence rate for $\log \|W_n\|$. We do not believe this rate is optimal and a $n^{-\frac12}$ rate should hold. However, proving it implies studying the process $W_n/\|W_n\|$ directly and the related tilting of $\Pi$ and it is not within the scope of this article.

Second, a large deviation principle holds.
\begin{theorem}[Restricted Large Deviation Principle]\label{thm:ldp log}
    Assume {\bf (Pur)} and {\bf (Erg)} hold. Assume there exist $\tau\in (0,2)$ and $\delta>0$ such that $\int \|v\|^{2-\tau}+\|v\|^{2+\delta}\d\mu(v)<\infty$. Let $\alpha=1-\tau/2$.
    Then, for any probability measure $\nu$ on $\sta$, there exists $s_-<0<s_+$ and an analytic convex function $\Upsilon:(s_-,s_+)\to\rr$ such that,
    \begin{enumerate}
        \item $\forall s\in (s_-,s_+)$, 
        $$\lim_{n\to\infty} \tfrac1n \log\ee_\nu(\exp(s\log\|W_nx_0\|))=\Upsilon(s)=\log(\rho_\alpha(\Gamma_s)).$$
        \item For any open subset $A$ of $(\partial_s^+\Upsilon(s_-),\partial_s^-\Upsilon(s+))$,
        \begin{align*}
             \lim_{n\to\infty}\tfrac1n\log\pp_\nu(\tfrac1n\log\|W_nx\|\in A)= -\inf_{w\in {A}}J(w)
        \end{align*}
        where the rate function $J$ is given by
        $$J(w)=\sup_{s\in (s_-,s_+)}ws-\Upsilon(s).$$
    \end{enumerate}
    If moreover, $\ee_\nu(|\langle x,y\rangle|)=\int|\langle x,y\rangle|\d\nu(\hat x)>0$ for any $ y$, such that $\Vert y\Vert=1$ (that is the support of $\nu$ is not included in a hyperplane),
    \begin{enumerate}
        \item $\forall s\in (s_-,s_+)$, 
        $$\lim_{n\to\infty} \tfrac1n \log\ee_\nu(\exp(s\log\|W_n\|))=\Upsilon(s).$$
        \item For any open subset $A$ of $(\partial_s^+\Upsilon(s_-),\partial_s^-\Upsilon(s+))$,
        \begin{align*}
             \lim_{n\to\infty}\tfrac1n\log\pp_\nu(\tfrac1n\log\|W_n\|\in A)= -\inf_{w\in {A}}J(w).
        \end{align*}
    \end{enumerate}
\end{theorem}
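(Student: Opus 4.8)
The plan is to derive Theorem~\ref{thm:ldp log} from the analyticity of $z\mapsto\Gamma_z$ on the strip (Theorem~\ref{AnalyticLog}) in exactly the same way Theorem~\ref{thm:ldp empirical} is derived from Theorem~\ref{Analytique}: apply Kato's perturbation theory to produce an analytic leading eigenvalue $\lambda(s)$ of $\Gamma_s$ near $s=0$, then invoke G\"artner--Ellis. More precisely, first I would fix $\alpha=1-\tau/2$ and note that, by Theorem~\ref{QCompact}, $\Gamma_0=\Pi$ has a simple dominant eigenvalue $1$ isolated from the rest of the spectrum (the peripheral spectrum of $\Pi$ is $\{1\}$ on the relevant invariant subspace, and the rest has modulus $\rho_\alpha(\Pi|_{\mathcal G_\alpha})<1$). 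Since $z\mapsto\Gamma_z$ is analytic on the strip, Kato's theory (as in \cite{kato2013perturbation}, compare \cite[Ch.~III]{HeHe01}) gives a neighbourhood of $0$ on which $\Gamma_s$ has a simple isolated eigenvalue $\lambda(s)$ depending analytically on $s$, with $\lambda(0)=1$, together with analytic rank-one spectral projectors $P_s$ and the estimate $\rho_\alpha(\Gamma_s(\id-P_s))<|\lambda(s)|$ for $s$ real and small. Set $\Upsilon(s)=\log\lambda(s)$ for real $s$; this is analytic near $0$, and let $(s_-,s_+)$ be the largest symmetric-about-$0$-free interval on which this construction and the spectral-gap domination persist.

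The second step is to identify $\ee_\nu(\exp(s\log\|W_nx_0\|))$ with $\nu(\Gamma_s^n\one)$ and extract the exponential rate. By the very definition of $\Gamma_z$ and of $\pp_\nu$ through \eqref{eq_defPnu}, iterating gives $\Gamma_s^n\one(\hat x)=\ee_{\delta_{\hat x}}(\exp(s\sum_{l=1}^n\log\|V_l\cdots V_1 x_{l-1}^{\mathrm{rep}}\|))$ — here the telescoping $\log\|W_nx\|=\sum_l(\log\|W_lx\|-\log\|W_{l-1}x\|)$ together with $\|W_lx\|/\|W_{l-1}x\|=\|V_l x_{l-1}\|$ (with $x_{l-1}$ the representative obtained by normalising $W_{l-1}x$) is what makes $\Gamma_z$ the correct tilt. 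Consequently $\ee_\nu(\exp(s\log\|W_nx_0\|))=\nu(\Gamma_s^n\one)$. Writing $\Gamma_s^n=\lambda(s)^nP_s+R_s^n$ with $\|R_s^n\|_\alpha=O((|\lambda(s)|\kappa)^n)$ for some $\kappa<1$, and using that $P_s$ is rank one with $P_s\one=c(s)f_s$, $\nu(f_s)=d_\nu(s)$, one gets $\nu(\Gamma_s^n\one)=\lambda(s)^n(c(s)d_\nu(s)+o(1))$. As long as $c(s)d_\nu(s)\neq0$ — which holds for $s$ in a neighbourhood of $0$ since at $s=0$ it equals $\one$-mass against the invariant data and is nonzero, and both factors are continuous — we obtain $\frac1n\log\ee_\nu(\exp(s\log\|W_nx_0\|))\to\Upsilon(s)$, giving part~(1); possibly shrinking $(s_-,s_+)$ to keep $c(s)d_\nu(s)\neq0$. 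Part~(2) then follows from the G\"artner--Ellis Theorem (\cite{dembo2009large}): $\Upsilon$ is analytic hence essentially smooth and convex (convexity from H\"older's inequality applied to the moment generating functions, or from being a limit of convex functions), so the Legendre transform $J$ governs the large deviations, and the restriction to open subsets of $(\partial_s^+\Upsilon(s_-),\partial_s^-\Upsilon(s_+))$ is exactly the standard restricted G\"artner--Ellis conclusion, identical in form to Theorem~\ref{thm:ldp empirical}(2).

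The third and genuinely model-specific step is the passage from $\log\|W_nx_0\|$ to $\log\|W_n\|$ under the extra hypothesis that $\supp\nu$ is not contained in a hyperplane, i.e. $\int|\langle x,y\rangle|\d\nu(\hat x)>0$ for every unit $y$. The elementary bound $\|W_nx\|\le\|W_n\|$ for unit $x$ gives one inequality for free: $\ee_\nu(\exp(s\log\|W_nx_0\|))\le\ee_\nu(\exp(s\log\|W_n\|))$ for $s>0$ (and the reverse for $s<0$), so one direction of each rate bound is immediate. For the other direction one needs a lower bound on $\|W_nx\|$ in terms of $\|W_n\|$ that holds with good $\nu$-probability. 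The idea is: let $u_n$ be a (random) unit vector with $\|W_nu_n\|=\|W_n\|$; by \pur{} the action on the projective space is asymptotically contracting, so $\|W_nx\|/\|W_n\|$ is, with overwhelming probability, bounded below by a quantity comparable to $|\langle x,u_n^{\perp\text{-ish}}\rangle|$ — more precisely one shows that under $\pp_\nu$ the ratio $\log\|W_nx_0\|-\log\|W_n\|$ is tight (this is already implicit in Theorem~\ref{thm:clt log}, where the same gap costs only an $n^{-1/4}$ rate), and the hyperplane-avoidance of $\nu$ guarantees $\ee_\nu(|\langle x_0, u_n\rangle|)$ does not decay exponentially. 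Combining: $\ee_\nu(\exp(s\log\|W_n\|))\le e^{o(n)}\ee_{\tilde\nu}(\exp(s\log\|W_nx_0\|))$ for a comparable measure $\tilde\nu$, and since part~(1) holds for \emph{every} probability measure, the rate $\Upsilon(s)$ is the same. The large deviation statement~(2) for $\|W_n\|$ then follows because two sequences whose log-moment-generating functions agree to first exponential order in a neighbourhood of $0$ satisfy the same restricted G\"artner--Ellis LDP. I expect this third step — making the comparison $\log\|W_n\|$ versus $\log\|W_nx_0\|$ quantitative enough that it contributes only $o(n)$ to the exponential scale, uniformly in the relevant range of $s$, and in particular controlling the non-exponential decay of $\ee_\nu(|\langle x_0,u_n\rangle|)$ using only \pur{}, \irr{} and the no-hyperplane condition — to be the main obstacle; the first two steps are a routine transcription of the $\Pi_z$ argument.
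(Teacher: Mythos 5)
Your first two steps follow the paper's route almost verbatim: analyticity of $z\mapsto\Gamma_z$ (Theorem~\ref{AnalyticLog}) plus Kato perturbation of the isolated simple eigenvalue $1$, the identity $\ee_\nu(e^{s\log\|W_nx_0\|})=\nu\Gamma_s^n\one$, and the restricted G\"artner--Ellis theorem, exactly as in the proof of Theorem~\ref{thm:ldp empirical}. One small correction there: by Theorem~\ref{QCompact} the peripheral spectrum of $\Pi=\Gamma_0$ is the whole group $\{e^{i\frac{l}{m}2\pi}\}$, not $\{1\}$ on some invariant subspace; this does not derail the argument (the eigenvalue $1$ is still simple and isolated, and the identification of the limit with $\log\rho_\alpha(\Gamma_s)$ is handled by the machinery of \cite{HeHe01} that the paper invokes), but your justification as written is inaccurate.

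The genuine gap is your third step, the passage from $\log\|W_nx_0\|$ to $\log\|W_n\|$, which you yourself leave as a program. As sketched it would not close: tightness of $\log\|W_nx_0\|-\log\|W_n\|$ controls distributions, not exponential moments, so it cannot by itself give the needed comparison of moment generating functions at exponential scale (the $n^{-1/4}$ Berry--Esseen trick does not transfer); moreover the maximising direction $u_n$ is random and correlated with $x_0$ under $\pp_\nu$, so neither the claim that $\ee_\nu(|\langle x_0,u_n\rangle|)$ has no exponential decay nor the appearance of a ``comparable measure $\tilde\nu$'' with an $e^{o(n)}$ error is justified, let alone uniformly in $s$. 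The paper avoids all of this with a purely deterministic two-sided bound (Lemma~\ref{IneqLog}, adapted from \cite[Lemma~2.7]{Gui}): for $s>-2$, by homogeneity one may take $\|v\|=1$, and the no-hyperplane hypothesis forces $\int_\sta\|vx\|^2\d\nu(\hat x)>0$ and $\int_\sta\|vx\|^{s+2}\d\nu(\hat x)>0$ for every such $v$, so by continuity and compactness of the unit sphere of $\mat$ there are constants $0<c_s(\nu)\leq C_s(\nu)<\infty$ with
$$c_s(\nu)\,\|v\|^s\int_\sta\|vx\|^2\d\nu(\hat x)\ \leq\ \int_\sta\|vx\|^{s+2}\d\nu(\hat x)\ \leq\ C_s(\nu)\,\|v\|^s\int_\sta\|vx\|^2\d\nu(\hat x)$$
for all $v\in\mat$. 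Applying this with $v=W_n$ and integrating against $\mu^{\otimes n}$ gives $c_s(\nu)\,\ee_\nu(e^{s\log\|W_n\|})\leq\ee_\nu(e^{s\log\|W_nx\|})\leq C_s(\nu)\,\ee_\nu(e^{s\log\|W_n\|})$ with constants independent of $n$, so the $\tfrac1n\log$ limits coincide and items (1)--(2) for $\log\|W_n\|$ follow from those for $\log\|W_nx\|$. This $n$-uniform deterministic comparison is the missing idea; without it, your step three is not a proof.
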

As for Theorem~\ref{thm:ldp empirical}, this theorem is also a corollary to Theorem~\ref{AnalyticLog} and is proved in Section~\ref{sec:proof ldp}. As for Theorem~\ref{thm:clt log}, the subtlety added compared to Theorem~\ref{thm:ldp empirical} is passing from $\log\|W_nx\|$ to $\log\|W_n\|$.

\section{Proof of the spectral gap, Theorem~\ref{QCompact}}\label{sec:proof QCompact}
We first prove $\Pi$ is quasi-compact. It yields Items~(\ref{Compact1}) and (\ref{Compact1bis}) of Theorem~\ref{QCompact}. Along the way we prove $\rho_\alpha(\Pi)=1$ and thus Item~(\ref{Compact3}). Then, using quasi-compactness, we prove the peripheral spectrum is a finite group of simple eigenvalues. That yields Item~(\ref{Compact2}) of Theorem~\ref{QCompact}.
\subsection{Quasi-Compactness}
In this section we prove Items~(\ref{Compact1},\ref{Compact1bis},\ref{Compact3}) of Theorem~\ref{QCompact}.
\begin{definition}\label{DefQCompact}
Let $( \staalg , \| \cdot \|) $ a Banach space and  $Q$ a bounded operator on $\staalg$. The operator $Q$ is quasi-compact if there exists a decomposition of $\staalg$ into disjoint closed $Q$-invariant subspaces,
$$ \staalg = \mathcal{F} \oplus \mathcal{H}, $$
where $\mathcal F$ is a finite dimensional space, all the eigenvalues of $Q|_{\mathcal{F}}$ have modulus
$\rho(Q)$ and $\rho(Q|_{\mathcal{H}}) < \rho(Q)$ where $\rho$ denotes the spectral radius function.
\end{definition}

The main result of this section is the following theorem.
\begin{theorem}
    \label{thm:pur implique quasi-compact}
    Let $0<\alpha\leq 1$. Assume {\bf (Pur)} holds. Then the operator $\Pi$ is a quasi-compact operator on $(C^\alpha(\sta,\cc),\Vert.\Vert_\alpha)$. Moreover, $\rho_\alpha(\Pi)=1$.
\end{theorem}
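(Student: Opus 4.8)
The strategy is to apply the Ionescu-Tulcea–Marinescu type criterion from \cite{HeHe01}: to prove quasi-compactness of $\Pi$ on $(C^\alpha(\sta,\cc),\|\cdot\|_\alpha)$ it suffices to verify (i) $\Pi$ is a bounded operator on $C^\alpha(\sta,\cc)$; (ii) $\Pi$ acts as a bounded operator on the larger Banach space $(C^0(\sta,\cc),\|\cdot\|_\infty)$, and the inclusion $C^\alpha\hookrightarrow C^0$ is compact (this is Arzelà–Ascoli, since $\sta$ is a compact metric space); and (iii) a Doeblin–Fortet (Lasota–Yorke) inequality of the form
$$
m_\alpha(\Pi^n f)\le A\, r^n\, m_\alpha(f) + B_n\,\|f\|_\infty
$$
for some $r<1$, constants $A>0$, $B_n\geq 0$, and all $n$ (or at least for one well-chosen $n$). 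Granting these, the abstract theorem gives the decomposition $C^\alpha = \mathcal F\oplus \mathcal H$ with $\mathcal F$ finite dimensional, $\Pi$-invariant, carrying the peripheral spectrum, and $\rho_\alpha(\Pi|_{\mathcal H})<\rho_\alpha(\Pi)$; moreover the essential spectral radius is bounded by $r$.

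**Key steps, in order.** First I would establish boundedness of $\Pi$ on both $C^0$ and $C^\alpha$; the $C^0$ bound is immediate from $\int\|vx\|^2\d\mu(v)=1$ (so $\|\Pi f\|_\infty\le\|f\|_\infty$, and in fact $\rho(\Pi|_{C^0})=1$ since $\Pi\one=\one$). For the $C^\alpha$ bound and, more importantly, the Doeblin–Fortet inequality, the core estimate is a contraction-in-average statement for the projective action: one needs to control $d(v\cdot\hat x,v\cdot\hat y)$ in terms of $d(\hat x,\hat y)$, weighted by $\|vx\|^2$. A direct computation gives $d(v\cdot\hat x,v\cdot\hat y)\le \frac{\|v\|_{\mathrm{op}}}{\|vx\|}\,d(\hat x,\hat y)\cdot(\text{something})$, but the naive bound is not contracting; the point is that, after $n$ steps, the purification hypothesis {\bf (Pur)} forces $\ee[(\|W_nx\|/\|W_ny\|)^{?}\cdot(\cdots)]$ to decay. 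Concretely I would show, using {\bf (Pur)}, that there exist $n_0$ and $r<1$ with
$$
\int \left(\frac{d(v\cdot\hat x,v\cdot\hat y)}{d(\hat x,\hat y)}\right)^\alpha \|W_{n_0}x\|^2\,\d\mu^{\otimes n_0}(v) \le r
$$
uniformly in $\hat x\neq\hat y$ (this is the key lemma; it is essentially the statement that the Markov chain on pairs is contracting on average, which is where {\bf (Pur)} enters — it rules out invariant subspaces of dimension $\ge2$ on which the dynamics is conformal/isometric and hence non-contracting). Using this and splitting $|\Pi^n f(\hat x)-\Pi^n f(\hat y)|$ into a part controlled by $m_\alpha(f)$ times the above average and a remainder controlled by $\|f\|_\infty$, I get the Doeblin–Fortet inequality. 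Finally, the ITM theorem yields the decomposition and the bound on the essential radius; to pin down $\rho_\alpha(\Pi)=1$ I note $\rho_\alpha(\Pi)\le\|\Pi\|_\alpha<\infty$ is at least $1$ because $\Pi\one=\one$ so $1\in\sigma(\Pi)$, and it is at most $1$ because the peripheral eigenvalues all have modulus $\rho_\alpha(\Pi)$ while $\Pi$ is a contraction on $C^0\supset C^\alpha$ forces $\rho_\alpha(\Pi)\le\rho(\Pi|_{C^0})=1$ (eigenfunctions in $C^\alpha$ are in particular continuous).

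**Main obstacle.** The hard part is the contraction-on-average lemma, i.e. extracting a genuine exponential contraction rate $r<1$ for the projective action from {\bf (Pur)} alone, without strong irreducibility and without invertibility of the $v$'s. The difficulty is that $\|vx\|$ can vanish, so the ratio $\|vx\|/\|vy\|$ is not uniformly controlled pointwise; one must work with the weighted measure $\|W_nx\|^2\d\mu^{\otimes n}$ and argue that the "non-contracting directions" form, after conditioning, a set that {\bf (Pur)} forces to have vanishing weight in the limit. I expect this to require a compactness/continuity argument on $\sta\times\sta$ together with the martingale or sub-multiplicativity structure of $\|W_n x\|$, likely reusing estimates already developed in \cite{BFPP17} for the purification assumption (where a similar contraction of $d(\hat x_n,\hat y_n)\to 0$ almost surely is proved); the new content here is making the convergence quantitative and uniform so that it feeds the ITM criterion.
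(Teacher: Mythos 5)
Your overall architecture (Ionescu--Tulcea--Marinescu criterion, sup-norm contraction, Arzel\`a--Ascoli compactness, a Doeblin--Fortet inequality, and the argument that $\rho_\alpha(\Pi)=1$ via $\Pi\one=\one$ plus the sup-norm bound) is exactly the route the paper takes. But the entire technical content of the theorem is the Doeblin--Fortet estimate, and there you have a genuine gap: your ``key lemma'' --- the uniform bound $\sup_{\hat x\neq\hat y}\int \bigl(d(W_{n_0}\cdot\hat x,W_{n_0}\cdot\hat y)/d(\hat x,\hat y)\bigr)^\alpha\|W_{n_0}x\|^2\,\d\mu^{\otimes n_0}\leq r<1$ --- is neither proved nor reducible to anything you cite. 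Rewriting the integrand as $\frac{\|\wedge^2W_{n_0}(x\wedge y)\|^\alpha}{\|x\wedge y\|^\alpha}\,\|W_{n_0}x\|^{2-\alpha}\|W_{n_0}y\|^{-\alpha}$ shows the problem: on the region $\|W_{n_0}x\|>\|W_{n_0}y\|$ your weight is the \emph{larger} of the two norms while the denominator carries the possibly vanishing $\|W_{n_0}y\|$, and without invertibility there is no moment control of this ratio; moreover the uniformity over well-separated pairs does not follow from the almost-sure purification statements of \cite{BFPP17}, which are qualitative and say nothing on the exceptional directions where $W_n\cdot\hat y$ fails to merge while the weight $\|W_nx\|^2$ stays bounded away from $0$. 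Saying the convergence ``should be made quantitative and uniform'' is precisely the missing proof, not a routine upgrade.

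The paper never proves (or needs) your lemma. Instead it bounds $\Pi^nf(\hat x)-\Pi^nf(\hat y)$ by splitting $\mathrm{M}_k(\cc)^n$ according to whether $\|W_nx\|>\|W_ny\|$ or the reverse, and on each region it writes the difference so that the H\"older increment of $f$ is weighted by the \emph{smaller} of $\|W_nx\|^2,\|W_ny\|^2$; then $d(W_n\cdot\hat x,W_n\cdot\hat y)^\alpha\|W_ny\|^2\leq \|\wedge^2W_n(x\wedge y)\|^\alpha\|W_ny\|^{2-2\alpha}$, the bound $\|W_ny\|^2\leq\tr(W_n^*W_n)$ and Jensen under the probability $\tfrac1k\tr(W_n^*W_n)\,\d\mu^{\otimes n}$ reduce everything to the quantity $g(n)=\int\|\wedge^2W_n\|\,\d\mu^{\otimes n}$, while the difference of weights $\bigl|\|W_nx\|^2-\|W_ny\|^2\bigr|\leq\tr(W_n^*W_n)\,d(\hat x,\hat y)$ produces the $\|f\|_\infty$ term. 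The geometric decay $g(n)\leq\lambda^n$ under {\bf (Pur)} is exactly Lemma~3.6 of \cite{BFPP17}, already available, so nothing has to be re-quantified. To repair your proof you should either import this splitting-plus-$g(n)$ mechanism, or actually prove your averaged contraction inequality --- which, in the form you state it, is the hard and unverified step (and is not obviously true as stated, for the reasons above). The rest of your outline (boundedness of $\Pi$ on $C^\alpha$, which itself needs the $n=1$ case of the same estimate, the ITM hypotheses, and the identification $\rho_\alpha(\Pi)=1$) is fine.
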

Once this theorem is proved, Items~(\ref{Compact1}) and (\ref{Compact1bis}) of Theorem~\ref{QCompact} follow from the quasi-compactness definition. The equality $\rho_\alpha(\Pi)=1$, stated in Item~(\ref{Compact3}) of Theorem~\ref{QCompact}, requires a supplementary argument that we provide at the end of this section. Item~\eqref{Compact2} of Theorem~\ref{QCompact} will be proved in next section.

\medskip

The main tool we use to prove this theorem is Ionescu-Tulcea Marinescu's Theorem \cite{ITM50}. It gives very useful sufficient conditions to prove that an operator is quasi-compact. The following theorem is the version of \cite{HeHe01}.
\begin{theorem}[Theorem~XIV.3 in \cite{HeHe01}]\label{ITM1}
Let $( \staalg , \| \cdot \|) $ a Banach space and  $Q$ a bounded operator on $\staalg$.
Let $| \cdot |$ a continuous semi-norm on $\staalg$ and $Q$ a bounded operator on $( \staalg , \| \cdot \|) $ such that:
\begin{enumerate}
    \item \label{Hyp1} $Q( \{ f : f \in \staalg, \| f \| \leq 1 \} )$ is totally bounded in $(\staalg, | \cdot |)$.
    \item \label{Hyp2} There exists a constant $M$ such that $\text{for all } f \in \staalg,$
    $$ |Qf| \leq M |f|. $$
    \item \label{Hyp3} There exist $n \in \nn$ and real numbers $r,R$ such that $r < \rho(Q) \text{ and for all } f \in \staalg$,
    $$  \| Q^n f \| \leq R |f| + r^n \| f \|. $$
\end{enumerate}
Then $Q$ is quasi-compact.
\end{theorem}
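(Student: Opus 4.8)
The plan is to deduce quasi-compactness from a bound on the \emph{essential spectral radius} $r_{\mathrm{ess}}(Q)$. After rescaling we may assume $\rho(Q)=1$ (replace $Q$ by $Q/\rho(Q)$, $r$ by $r/\rho(Q)<1$). Let $C$ be a continuity constant for the semi-norm, $|f|\le C\|f\|$, write $U=\{f\in\staalg:\|f\|\le1\}$ for the closed unit ball, and for a bounded set $A\subset\staalg$ let $\beta(A)$ be its Hausdorff measure of non-compactness for $\|\cdot\|$, with $\beta(T):=\beta(T(U))$. I will use two classical facts: Nussbaum's formula $r_{\mathrm{ess}}(Q)=\inf_{k\ge1}\beta(Q^k)^{1/k}$, and the fact that $\sigma(Q)\cap\{|z|>r_{\mathrm{ess}}(Q)\}$ consists of isolated eigenvalues of finite algebraic multiplicity. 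Thus it suffices to prove $r_{\mathrm{ess}}(Q)\le r$.

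First I would upgrade hypothesis \eqref{Hyp3} to all powers of $n$: by induction on $m$ there is a finite constant $R_m$ with $\|Q^{mn}g\|\le R_m|g|+r^{mn}\|g\|$ for all $g\in\staalg$. The case $m=1$ is \eqref{Hyp3}; for the step, apply \eqref{Hyp3} to $Q^{mn}g$ and use $|Q^{mn}g|\le M^{mn}|g|$, which follows from iterating \eqref{Hyp2}. Now fix $\delta>0$. By \eqref{Hyp1} the (norm-bounded) set $Q(U)$ is totally bounded in $|\cdot|$, so it is covered by finitely many $|\cdot|$-balls of radius $\delta$ with centres $h_1,\dots,h_N\in Q(U)$. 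Given $f\in U$, choose $i$ with $|Qf-h_i|<\delta$; since $Qf,h_i\in Q(U)$ both have norm $\le\|Q\|$,
\[\|Q^{mn+1}f-Q^{mn}h_i\|=\|Q^{mn}(Qf-h_i)\|\le R_m|Qf-h_i|+r^{mn}\|Qf-h_i\|\le R_m\delta+2r^{mn}\|Q\|.\]
Hence $Q^{mn+1}(U)$ is covered by $N$ norm-balls of radius $R_m\delta+2r^{mn}\|Q\|$, and letting $\delta\to0$ (with $m$, hence $R_m$, fixed) gives $\beta(Q^{mn+1})\le 2r^{mn}\|Q\|$. Feeding $k=mn+1$ into Nussbaum's formula and letting $m\to\infty$ yields $r_{\mathrm{ess}}(Q)\le\lim_m(2\|Q\|\,r^{mn})^{1/(mn+1)}=r$; if $r=0$ this simply says $Q^{n+1}$ is compact, so again $r_{\mathrm{ess}}(Q)=0$.

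Finally I would assemble the spectral decomposition of Definition~\ref{DefQCompact}. Since $\rho(Q)=1>r\ge r_{\mathrm{ess}}(Q)$, the set $\Lambda:=\sigma(Q)\cap\{|z|=1\}$ is nonempty (the spectral radius is attained on the compact set $\sigma(Q)$), consists of eigenvalues of finite algebraic multiplicity isolated in $\sigma(Q)$, and is finite (a discrete closed subset of the compact set $\sigma(Q)$). Let $P_j$ be the Riesz projection attached to $\lambda_j\in\Lambda$, put $P=\sum_jP_j$ (finite rank), $\mathcal F=\range P$, $\mathcal H=\ker P$. Then $\staalg=\mathcal F\oplus\mathcal H$ is a decomposition into closed $Q$-invariant subspaces with $\dim\mathcal F<\infty$, $\sigma(Q|_{\mathcal F})=\Lambda\subset\{|z|=\rho(Q)\}$, and $\sigma(Q|_{\mathcal H})=\sigma(Q)\setminus\Lambda$, a compact subset of the open unit disc, so $\rho(Q|_{\mathcal H})<1=\rho(Q)$. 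This is precisely quasi-compactness.

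The main obstacle is the non-compactness estimate $\beta(Q^{mn+1})\le 2r^{mn}\|Q\|$ together with the correct invocation of the two standard-but-nontrivial facts about $r_{\mathrm{ess}}$ (Nussbaum's formula and the structure of the spectrum outside the essential-spectrum disc). A more self-contained alternative is the original Ionescu--Tulcea--Marinescu argument: show directly from \eqref{Hyp1} and \eqref{Hyp3} that for every $\lambda$ with $|\lambda|>r$ the operator $\lambda-Q$ has finite-dimensional kernel and closed range of finite codimension (hence is Fredholm of index $0$), that only finitely many such eigenvalues have modulus $\rho(Q)$, and that the associated ascents are finite; this avoids measures of non-compactness but is considerably longer.
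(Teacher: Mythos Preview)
The paper does not prove this theorem; it simply quotes it as Theorem~2.5 of \cite{HeHe01} and then applies it. So there is no ``paper's own proof'' to compare against. Your argument via Nussbaum's formula for the essential spectral radius is a correct and standard route to this result: the key computation $\beta(Q^{mn+1})\le 2r^{mn}\|Q\|$ is right, and the passage from $r_{\mathrm{ess}}(Q)\le r<\rho(Q)$ to the Riesz decomposition $\mathcal F\oplus\mathcal H$ is the usual Riesz--Schauder/Fredholm theory outside the essential disc. Two small caveats: the rescaling step implicitly assumes $\rho(Q)>0$, which is harmless here since the definition of quasi-compactness in the paper requires $\rho(Q|_{\mathcal H})<\rho(Q)$ and is vacuous otherwise; and you should be explicit that ``relatively compact in $(\staalg,|\cdot|)$'' for a semi-norm means totally bounded in the induced pseudometric, which is exactly what you use.

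For context, the proof in \cite{HeHe01} (building on Hennion's 1993 refinement of Ionescu--Tulcea--Marinescu) proceeds along the lines you sketch as the ``self-contained alternative'': it works more directly with the Doeblin--Fortet inequality \eqref{Hyp3} to exhibit the finite-rank peripheral projection, rather than invoking Nussbaum's formula as a black box. Your approach is shorter but relies on two nontrivial cited facts; theirs is longer but more elementary. Either is acceptable.
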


Using this theorem, quasi-compactness of $\Pi$ is the consequence of the three following properties.
\begin{enumerate}[label=(\roman*)]
    \item  $\Pi( \{ f : f \in \staalg, \| f \|_\alpha \leq 1 \} )$ is relatively compact in $(C^\alpha(\sta,\cc),\Vert.\Vert_\infty)$.\label{it:Pi rel compact}
    \item  For all $f$ in $C^\alpha(\sta,\cc)$,
    $$ \Vert \Pi f\Vert_\infty \leq \Vert f\Vert_\infty.$$\label{it:Pi bounded}
    \item  There exist $n \in \nn$ and real numbers $r,R$ such that $r < \rho_\alpha(\Pi) \text{ and for all } f \in C^\alpha(\sta,\cc),$
    $$  \Vert \Pi^n f \Vert_\alpha \leq R \Vert f\Vert_\infty + r^n \| f \|_\alpha.$$\label{it:Pi geom conv}
\end{enumerate}
The norm $\|. \|_\infty$ plays the part of the semi-norm in Theorem~\ref{ITM1}.

\medskip
Item~\ref{it:Pi bounded} is a direct consequence of the fact that $\Pi$ is a Markov kernel. Items~\ref{it:Pi rel compact} and~\ref{it:Pi geom conv} require more arguments. The main one is an appropriate estimation of $\|\Pi^n\|_\alpha$. It relies on a sub-additive function introduced in \cite{BFPP17}.

On $\P(\cc^k)$, the metric $d(\hat x,\hat y)=\sqrt{1-|\langle x,y\rangle|^2}$ is naturally associated to the notion of exterior product. We recall the relevant definitions. For $x_1, x_2$ in $\cc^k$ we denote by $x_1\wedge x_2$ the alternating bilinear form $(y_1, y_2) \mapsto \det\big(\langle x_i, y_j\rangle \big)_{i,j=1}^2$. Then, the set of all $x_1\wedge x_2 $ is a generating family for the set $\wedge^2\cc^k$ of alternating bilinear forms on $\cc^k$, and we can define a Hermitian inner product by 
\[\langle x_1\wedge x_2, y_1\wedge y_2\rangle = \det\big(\langle x_i, y_j\rangle \big)_{i,j=1}^2, \]
and denote by $\|x_1\wedge x_2\|$ the associated norm. It is immediate to verify that
\begin{equation}\label{eq:d wedge}
\d(\hat{x},\hat{y})=\|x\wedge y\|=\|\pi_{\hat x} - \pi_{\hat y}\|=\tfrac12\|\pi_{\hat x}-\pi_{\hat y}\|_1,
\end{equation}
with $\pi_{\hat x}$ the orthogonal projector onto $\cc x$ and $\|.\|_1$ the trace norm.

For a linear map $A$ on $\mathbb C^k$, we write $\wedge^2 A$ for the linear map on $\wedge^2\cc^k$ defined by
\begin{equation}\label{eq_defwedgepA}
\wedge^2 A \,(x_1\wedge x_2)=Ax_1\wedge Ax_2.
\end{equation}

Let $g:\nn\to \rr_+$ be the function defined by
$$ g(n) =  \int_{\mat^n} \| \wedge^2 W_n \| \d \mu^{\otimes n}(v_1,...,v_n).$$
Remark that $\|\wedge A\|\leq \|A\|^2$ implies $g(n)\leq \left(\int \|v\|^2\d\mu(v)\right)^n$ for any $n\in \nn$.
The following lemma concerning $g$ is proved in \cite{BFPP17}.

\begin{lemma}[Lemma~3.6 in \cite{BFPP17}]\label{lem:g decreasing geom}
    The function $g$ is sub-multiplicative and if {\bf (Pur)} holds, there exist $n_0 \in \nn$ and $0< \lambda < 1 $ such that for all $n \geq n_0$, $g(n) \leq \lambda^n$.
\end{lemma}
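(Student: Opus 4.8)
\textbf{Proof plan for Lemma~\ref{lem:g decreasing geom}.}

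The statement has two parts: sub-multiplicativity of $g$, and, under \pur, the existence of $n_0$ and $\lambda<1$ with $g(n)\le\lambda^n$ for $n\ge n_0$. For sub-multiplicativity, the plan is to exploit the multiplicativity of $\wedge^2$: since $W_{n+m}=(V_{n+m}\dotsb V_{n+1})(V_n\dotsb V_1)$ and $\wedge^2(AB)=(\wedge^2 A)(\wedge^2 B)$, sub-multiplicativity of the operator norm gives $\|\wedge^2 W_{n+m}\|\le \|\wedge^2(V_{n+m}\dotsb V_{n+1})\|\,\|\wedge^2 W_n\|$. Integrating against $\mu^{\otimes(n+m)}$ and using that the blocks $(v_1,\dots,v_n)$ and $(v_{n+1},\dots,v_{n+m})$ are independent coordinates, Tonelli's theorem factorizes the integral; a change of variables identifies $\int \|\wedge^2(v_{n+m}\dotsb v_{n+1})\|\,\d\mu^{\otimes m}$ with $g(m)$, yielding $g(n+m)\le g(n)\,g(m)$. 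The finiteness of each $g(n)$ (so that the inequality is meaningful) follows from $\|\wedge^2 A\|\le\|A\|^2$ and the finite second moment hypothesis, as noted in the remark preceding the lemma.

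For the geometric decay, the plan is to combine sub-multiplicativity with a Fekete-type argument: $\log g$ is sub-additive, so $\tfrac1n\log g(n)$ converges to $\inf_n \tfrac1n\log g(n)=:\log\lambda\in[-\infty,\log\bigl(\int\|v\|^2\d\mu(v)\bigr)]$, and it suffices to show $\lambda<1$, i.e.\ that $g(n_0)<1$ for some $n_0$. Equivalently (since $g(n)=\ee_{\mu^{\otimes n}}\|\wedge^2 W_n\|$ and one can also pass to the probability measure $\pp_\nu$ for any starting $\nu$, writing $\|\wedge^2 W_n\|$ in terms of the Markov chain), it suffices to show $\|\wedge^2 W_n\|\to 0$ in an averaged sense. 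The key structural input is the identity, for unit vectors $x,y$,
$$\|\wedge^2 W_n(x\wedge y)\|=\|W_nx\|\,\|W_ny\|\,d(W_n\cdot\hat x,W_n\cdot\hat y),$$
which ties the norm of $\wedge^2 W_n$ to the contraction of the projective metric under the chain. Under \pur, the process is \emph{purifying}: the projective distance between two trajectories driven by the same noise collapses, and more quantitatively one shows $\ee\bigl[\|W_nx\|^2\|W_ny\|^2 d(W_n\cdot\hat x,W_n\cdot\hat y)^2\bigr]\to 0$, hence a suitable norm of $\wedge^2 W_n$ tends to $0$ in expectation; picking $n_0$ large makes $g(n_0)<1$.

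The main obstacle is precisely this last step: showing that \pur\ forces $g(n_0)<1$ for some $n_0$, rather than merely $\lambda\le 1$. The natural route is a martingale/compactness argument. One considers, on the enlarged space $\Omega$ with $\pp_\nu$ for a well-chosen reference measure $\nu$, the quantity $m_n=\ee[\|\pi_{\hat x_n^{(1)}}-\pi_{\hat x_n^{(2)}}\|^2\mid\mathcal O_n]$ for two coupled trajectories, or more directly uses that $n\mapsto g(n)^{1/n}$ is non-increasing with limit $\lambda$ and argues by contradiction: if $\lambda=1$, then $g(n)=1$ for all $n$ (by monotonicity and $g(1)\le\int\|v\|^2\d\mu=1$... wait, rather one must rule out that $\|\wedge^2 W_n\|$ fails to contract on a non-negligible set), which by the equality above would force the existence of a rank-$\ge 2$ orthogonal projector on which all finite products $v_n^*\dotsb v_1^* v_1\dotsb v_n$ act as (proportional to) isometries — contradicting \pur. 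Making this dichotomy rigorous — extracting from $g(n)\not\to 0$ an invariant subspace of dimension $\ge 2$ supporting only near-unitary dynamics, using a compactness argument on the relevant Grassmannian and the cocycle property — is the technical heart; the measure-theoretic bookkeeping with the non-finite measure $\mu$ and the cylinder structure of $\Omega$ is routine by comparison. (Since this lemma is quoted from \cite{BFPP17}, one may alternatively simply cite the argument there; the sketch above is the self-contained route.)
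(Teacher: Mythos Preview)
The paper does not prove this lemma; it cites \cite{BFPP17}. Your sub-multiplicativity argument and the Fekete reduction to finding some $n_0$ with $g(n_0)<1$ are both correct and are exactly the skeleton of the proof in \cite{BFPP17}. The problems are in how you reach $g(n_0)<1$. The aside that $g(n)^{1/n}$ is non-increasing is false for sub-multiplicative sequences in general (e.g.\ $g(1)=4$, $g(2)=1$, $g(3)=4$); Fekete gives $\lim=\inf$, not monotonicity, so the contradiction branch built on that claim does not get started. More substantively, you propose passing to $\pp_\nu$ ``for any starting $\nu$'', but $\pp_\nu$ weights cylinders by $\|W_nx\|^2$ rather than by $\tr(W_n^*W_n)$, so it does not recover $g(n)$; and the displayed identity controls $\|\wedge^2 W_n(x\wedge y)\|$ for \emph{fixed} $x,y$, not the operator norm $\|\wedge^2 W_n\|$ that defines $g$.

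The clean route, which is the one in \cite{BFPP17}, uses the probability measure $\ppch$ given by $\d\ppch|_{\outalg_n}=\tfrac1k\tr(W_n^*W_n)\,\d\mu^{\otimes n}$ (it already appears in this paper inside the proof of Lemma~\ref{Lemma}). One writes $g(n)=k\,\eech\bigl[\|\wedge^2 W_n\|/\tr(W_n^*W_n)\bigr]$; the integrand equals $a_1(W_n)a_2(W_n)\big/\sum_i a_i(W_n)^2\le\tfrac12$, and under \pur\ one shows $a_2(W_n)/a_1(W_n)\to 0$ $\ppch$-almost surely (this is exactly where the non-existence of a rank $\ge 2$ subspace on which products are proportional to isometries is used), so bounded convergence gives $g(n)\to 0$ directly. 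Your contradiction sketch is morally the contrapositive of the same statement, but the step ``from $g(n)\not\to 0$ extract a bad subspace via compactness on a Grassmannian'' is precisely the hard part and is not carried out; it is both shorter and cleaner to argue the direct implication through $\ppch$.
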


{
Next lemma implies \Cref{prop:Endo}} 
and unlocks the proof of Items~\ref{it:Pi rel compact} and~\ref{it:Pi geom conv}. 
\begin{lemma}\label{Lemma}
For every $f \in C^{\alpha}(\sta,\cc) $ and every $n \in \nn$
$$ \|\Pi^n f\|_\alpha \leq 3k^{1-\alpha}g(n)^{\alpha} \| f \|_\alpha + (2k +1) \| f \|_{\infty}.$$
Hence, $\Pi$ is a bounded endomorphism of $C^{\alpha}(\sta,\cc)$.
\end{lemma}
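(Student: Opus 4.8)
The plan is to control $\|\Pi^n f\|_\alpha = \|\Pi^n f\|_\infty + m_\alpha(\Pi^n f)$ term by term. The bound $\|\Pi^n f\|_\infty \le \|f\|_\infty$ is immediate since $\Pi$ is a Markov kernel, so the whole content is the estimate on the Hölder coefficient $m_\alpha(\Pi^n f)$. Fix $\hat x,\hat y\in\sta$ with unit representatives $x,y$. Unfolding the definition via the measure $\mathbb P_\nu$ or directly via $\mu^{\otimes n}$, one writes
\[
\Pi^n f(\hat x)=\int_{\mathrm M_k(\cc)^n} f(W_n\cdot\hat x)\,\|W_n x\|^2\,\d\mu^{\otimes n}(v_1,\dots,v_n),
\]
and similarly for $\hat y$. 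The difference $\Pi^n f(\hat x)-\Pi^n f(\hat y)$ then splits, after inserting and subtracting the cross term $f(W_n\cdot\hat x)\|W_n y\|^2$, into
\[
\int f(W_n\cdot\hat x)\bigl(\|W_n x\|^2-\|W_n y\|^2\bigr)\d\mu^{\otimes n}
\;+\;\int\bigl(f(W_n\cdot\hat x)-f(W_n\cdot\hat y)\bigr)\|W_n y\|^2\,\d\mu^{\otimes n}.
\]

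For the first integral I would bound $|f(W_n\cdot\hat x)|\le\|f\|_\infty$ and then estimate $\bigl|\,\|W_n x\|^2-\|W_n y\|^2\,\bigr|$. Writing $\|W_n x\|^2=\tr(W_n^*W_n\pi_{\hat x})$ and using \eqref{eq:d wedge}, namely $\|\pi_{\hat x}-\pi_{\hat y}\|_1=2d(\hat x,\hat y)$, gives $\bigl|\,\|W_n x\|^2-\|W_n y\|^2\,\bigr|\le \|W_n^*W_n\|\cdot\|\pi_{\hat x}-\pi_{\hat y}\|_1\le 2\|W_n\|^2 d(\hat x,\hat y)$; but $\|W_n\|^2$ is not $\mu^{\otimes n}$-integrable a priori in the right way, so instead I would exploit that $d\le 1$ always, hence $\bigl|\,\|W_nx\|^2-\|W_ny\|^2\,\bigr|\le \|W_nx\|^2+\|W_ny\|^2$, and interpolate: the quantity is at most $\bigl(2\|W_n\|^2 d(\hat x,\hat y)\bigr)^{\alpha}\bigl(\|W_nx\|^2+\|W_ny\|^2\bigr)^{1-\alpha}$. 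After integration and using $\int(\|W_nx\|^2+\|W_ny\|^2)\d\mu^{\otimes n}=2$ together with Hölder's inequality in the integral, the $\|W_n\|^{2\alpha}d^\alpha$ factor is tied to $g(n)$ via $\|\wedge^2 W_n\|$; the key algebraic input here is the identity (or inequality) relating $\|W_n x\|\,\|W_n y\|\,d(W_n\cdot\hat x,W_n\cdot\hat y)$ to $\|\wedge^2 W_n (x\wedge y)\| \le \|\wedge^2 W_n\|\,d(\hat x,\hat y)$, which is exactly the mechanism by which $g(n)$ enters. This is the step I expect to be the main obstacle: getting the powers of $k$, the exponent $\alpha$ versus $1-\alpha$, and the pairing with $g(n)$ all consistent, since one must pass between operator norms, the Hilbert–Schmidt-type norm on $\wedge^2\cc^k$ (where the factor $k^{1-\alpha}$ comes from comparing $\|\cdot\|$ and $\|\cdot\|_1$ on $k$-dimensional matrix spaces), and the trace norm.

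For the second integral I would use $\alpha$-Hölder continuity of $f$: $|f(W_n\cdot\hat x)-f(W_n\cdot\hat y)|\le m_\alpha(f)\,d(W_n\cdot\hat x,W_n\cdot\hat y)^\alpha$, then bound $d(W_n\cdot\hat x,W_n\cdot\hat y)=\dfrac{\|\wedge^2 W_n(x\wedge y)\|}{\|W_nx\|\,\|W_ny\|}$, so that $d(W_n\cdot\hat x,W_n\cdot\hat y)^\alpha\|W_ny\|^2 \le \|\wedge^2W_n\|^\alpha\, d(\hat x,\hat y)^\alpha\,\|W_ny\|^{2-\alpha}\|W_nx\|^{-\alpha}$. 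Here I would again split off $\|W_nx\|^\alpha\|W_ny\|^\alpha$ against a remaining $\|W_ny\|^{2-2\alpha}$, or more robustly observe that when $\|W_nx\|$ is small the pointwise contribution is controlled by $\|f\|_\infty$ directly (one never actually needs to divide by a small number because $f$ is bounded). Concretely: split the domain of integration according to whether $\|\wedge^2W_n(x\wedge y)\|\le \|W_nx\|\|W_ny\|$ (so $d(W_n\cdot\hat x,W_n\cdot\hat y)\le 1$ trivially) — on the complement the ``distance'' exceeds $1$ which is impossible — so one always has $d(W_n\cdot\hat x,W_n\cdot\hat y)^\alpha\|W_ny\|^2\le \min\{1,\text{the ratio}\}^\alpha\|W_ny\|^2$, and then $\min\{a,b\}\le a^\alpha b^{1-\alpha}$ gives $\le \|\wedge^2W_n(x\wedge y)\|^\alpha\,(\|W_nx\|\|W_ny\|)^{-\alpha}\|W_ny\|^2\le\|\wedge^2W_n\|^\alpha d(\hat x,\hat y)^\alpha\,\|W_ny\|^{2-\alpha}/\|W_nx\|^\alpha$; pairing the leftover $\|W_ny\|^{-\alpha}$ factor symmetrically and invoking $\|W_ny\|^2\le k\|\wedge\cdots\|$-type bounds to absorb stray $k$'s, integration against $\mu^{\otimes n}$ and Jensen/Hölder in the integral produce the $k^{1-\alpha}g(n)^\alpha m_\alpha(f)$ term. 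Combining the two integrals and adding the $\|f\|_\infty$ bound on $\|\Pi^n f\|_\infty$ yields the stated inequality with constants $3k^{1-\alpha}$ and $2k+1$; the final boundedness of $\Pi$ on $C^\alpha(\sta,\cc)$ follows by taking $n=1$ and recalling $g(1)\le\int\|v\|^2\d\mu(v)<\infty$.
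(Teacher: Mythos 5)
Your overall skeleton (bounding $\|\Pi^n f\|_\infty\le\|f\|_\infty$, splitting $\Pi^nf(\hat x)-\Pi^nf(\hat y)$ into a ``norm-difference'' term and an ``$f$-difference'' term, and producing $k^{1-\alpha}g(n)^\alpha$ by a Jensen/H\"older step) is the same as the paper's, but the device that actually closes the estimate is missing: the paper decomposes the \emph{integration domain} into $M(x,y)=\{\|W_nx\|>\|W_ny\|\}$, $M(y,x)$ and $\{\|W_nx\|=\|W_ny\|\}$, and on each piece inserts the cross term attached to the smaller of the two norms. This does two things your single global insertion of $f(W_n\cdot\hat x)\|W_ny\|^2$ does not. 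First, it keeps every term well defined: since the matrices need not be invertible, the set where $W_nx=0\neq W_ny$ can have positive measure, and there your cross term $f(W_n\cdot\hat x)\|W_ny\|^2$ is meaningless. Second, and decisively, on $M(x,y)$ one has $\|W_nx\|^{-\alpha}\le\|W_ny\|^{-\alpha}$, so the $f$-difference term is bounded by $m_\alpha(f)\,\|\wedge^2W_n\,x\wedge y\|^\alpha\,\|W_ny\|^{2-2\alpha}\le m_\alpha(f)\,\|\wedge^2W_n\|^\alpha d(\hat x,\hat y)^\alpha\,\tr(W_n^*W_n)^{1-\alpha}$, after which H\"older in the integral (equivalently, Jensen with respect to the probability $\tfrac1k\tr(W_n^*W_n)\d\mu^{\otimes n}$) gives exactly $k^{1-\alpha}g(n)^\alpha d(\hat x,\hat y)^\alpha m_\alpha(f)$. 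In your version the factor $\|W_ny\|^{2-\alpha}\|W_nx\|^{-\alpha}$ is never controlled when $\|W_nx\|<\|W_ny\|$: the ``$\min$ with $1$'' manipulation simply reproduces the same ratio, and the ``$\|W_ny\|^2\le k\|\wedge^2W_n\|$-type'' bound you invoke to absorb it is false (take $W_n$ of rank one, so $\wedge^2W_n=0$ while $\|W_ny\|>0$). Without the $M(x,y)$/$M(y,x)$ splitting (or some equivalent symmetrization), the $f$-difference estimate does not go through.

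Your treatment of the norm-difference term is also off track, though more easily repaired. The worry that $\|W_n\|^2$ is ``not integrable in the right way'' is unfounded: $\|W_n\|^2\le\tr(W_n^*W_n)$ and, by iterating \eqref{eq:stochastic family}, $\int\tr(W_n^*W_n)\,\d\mu^{\otimes n}=k$; combining $|\,\|W_nx\|^2-\|W_ny\|^2|\le\tr(W_n^*W_n)\,\|\pi_{\hat x}-\pi_{\hat y}\|$ (H\"older for Schatten norms) with $d(\hat x,\hat y)\le d(\hat x,\hat y)^\alpha$ gives the clean bound $k\|f\|_\infty d(\hat x,\hat y)^\alpha$, with no $g(n)$ at all. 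Your attempt to tie $\|W_n\|^{2\alpha}d^\alpha$ to $g(n)$ ``via $\|\wedge^2 W_n\|$'' uses the inequality in the wrong direction: one only has $\|\wedge^2W_n\|\le\|W_n\|^2$, never the converse, so $g(n)$ cannot dominate that term. With the domain decomposition in place, the three $f$-difference pieces contribute $3k^{1-\alpha}g(n)^\alpha\|f\|_\alpha$, the two norm-difference pieces contribute $2k\|f\|_\infty$, and adding $\|\Pi^nf\|_\infty\le\|f\|_\infty$ yields the stated constants $3k^{1-\alpha}$ and $2k+1$; your final step ($n=1$ and $g(1)\le\int\|v\|^2\d\mu<\infty$) for boundedness of $\Pi$ is fine.
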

\begin{proof}
For any $\hat x,\hat y\in \sta$, let
$$M(x,y) = \{ A\in \mat: \| Ax \| > \|Ay\| \}.$$
Fix $n \in \nn$, $\hat x,\hat y \in \sta$, and $f \in C^{\alpha}(\sta,\cc)$. We consider the following $(x,y)$ dependent decomposition 
$\mat = M(x,y) \cup M(y,x)\cup\{A\in\mathrm{M}_k(\mathbb C):\Vert Ax\Vert=\Vert Ay\Vert\}.$
By definition
$$\Pi^n f(\hat x)=\int_{M_k(\cc)^n}f(W_n \cdot \hat x) \| W_nx \|^2 d \mu^{\otimes n}.$$
Using the decomposition of $\mat$,
\begin{align}\label{eq:decompo Q+Q+R}
    \begin{split}
        \Pi^nf(\hat x) - \Pi^nf(\hat y)  =& \ Q_{x,y}(f)+Q_{y,x}(f) \\
     &+\int_{\|W_nx\|=\|W_ny\|} \big(f(W_n \cdot \hat x)  - f(W_n \cdot \hat y)\big) \| W_nx \|^2\d\mu^{\otimes n},
    \end{split}
\end{align}
with, for any $\hat x,\hat y \in \sta$,
$$Q_{x,y}(f)=\int_{W_n\in M(x,y)} f(W_n \cdot \hat x) \| W_nx \|^2 - f(W_n \cdot \hat y) \| W_ny \|^2 \d \mu^{\otimes n}.$$

We bound $Q_{x,y}(f)$ for any $\hat x,\hat y\in \sta$. For any $\hat x,\hat y\in \sta$,
\begin{align}
    Q_{x,y}(f)=&\int_{W_n\in M(x,y)}f(W_n\cdot \hat x)(\|W_nx\|^2-\|W_ny\|^2)\d\mu^{\otimes n}\label{eq:decompo Q f(w-w)}\\
            &+\int_{W_n\in M(x,y)} \big(f(W_n\cdot\hat x)-f(W_n\cdot \hat y)\big)\|W_n y\|^2 \d\mu^{\otimes n}\label{eq:decompo Q (f-f)w}.
\end{align}
Remark that $\|W_nx\|> \|W_ny\|$ implies $\|W_nx\|>0$ so that $W_n\cdot\hat y$ and $W_n\cdot \hat x$ are well defined with respect to the measure $\one_{W_n\in M(x,y)}\|W_ny\|^2\d\mu^{\otimes n}$.

We first tackle the bound on the right hand side of \eqref{eq:decompo Q f(w-w)}. For any $A\in \mat$, $\|Ax\|^2-\|Ay\|^2=\tr(A^*A(\pi_{\hat x}-\pi_{\hat y}))$. Then Hölder's inequality for Schatten norms on matrices implies 
$$|\|Ax\|^2-\|Ay\|^2|\leq \tr(A^*A)\|\pi_{\hat x}-\pi_{\hat y}\|.$$
Using $d(\hat x,\hat y)=\|\pi_{\hat x}-\pi_{\hat y}\|$, it follows,
\begin{align}\label{eq:esim f(norm-norm)}
    \begin{split}
        \Big|\int_{W_n\in M(x,y)}f(W_n\cdot \hat x)(\|W_nx\|^2&-\|W_ny\|^2)\d\mu^{\otimes n}\Big|\\
            &\leq \|f\|_\infty d(\hat x,\hat y)\int_{W_n\in M(x,y)} \tr(W_n^*W_n)\d\mu^{\otimes n}\\
            &=k\|f\|_\infty d(\hat x,\hat y)
    \end{split}
\end{align}
where we used $\int \tr(W_n^*W_n)\d\mu^{\otimes n}=k$ since $\int W_n^*W_n\d\mu^{\otimes n}=\id_{\cc^k}$.

We now deal with the bound on Equation~\eqref{eq:decompo Q (f-f)w}. Since $f$ is an $\alpha$-Hölder continuous function, for any $A\in \mat$ such that $\|Ax\|\|Ay\|>0$,
\begin{eqnarray*}
   |f(A\cdot \hat x) - f(A \cdot \hat y)| &\leq &m_\alpha(f)d(A\cdot \hat x,A\cdot \hat y)^\alpha\\&=& m_\alpha(f) \frac{\| \wedge^2 A\  x \wedge y \|^{\alpha}}{\| Ax \|^{\alpha}\| A y \|^{\alpha}}. 
\end{eqnarray*}
Then, the triangular inequality and $\|W_nx\|^{-\alpha}\leq \|W_ny\|^{-\alpha}$ imply
\begin{align*}
    \begin{split}
    \Big|\int_{W_n\in M(x,y)}\big(f(W_n\cdot\hat x)-&f(W_n\cdot \hat y)\big)\|W_n y\|^2 \d\mu^{\otimes n}\Big|\\
    &\leq m_\alpha(f) \int_{W_n\in M(x,y)} \|\wedge^2 W_n\ x\wedge y\|^\alpha \|W_n y\|^{2(1-\alpha)}\d\mu^{\otimes n}.
    \end{split}
\end{align*}
Using, $\|W_ny\|^2\leq \tr(W_n^*W_n)$ and denoting $\ppch$ the probability measure defined by $\d\ppch|_{\outalg_n}=\tfrac1k\tr(W_n^*W_n)\d\mu^{\otimes n}$ -- see \cite[Page~314]{BFPP17} -- it follows that,
\begin{align*}
    \begin{split}
    \Big|\int_{W_n\in M(x,y)}\big(f(W_n\cdot\hat x)-&f(W_n\cdot \hat y)\big)\|W_n y\|^2 \d\mu^{\otimes n}\Big|\\
    &\leq k^{1-\alpha}m_\alpha(f) \int_{W_n\in M(x,y)} \|\wedge^2 W_n\ x\wedge y\|^\alpha (\tfrac1k\tr(W_n^*W_n))^{1-\alpha}\d\mu^{\otimes n}\\
    &\leq k^{1-\alpha}m_\alpha(f)\ee_{\ppch}\left(\frac{\|\wedge^2 W_n\|^\alpha}{(\tfrac1k\tr(W_n^*W_n))^\alpha}\right) d(\hat x,\hat y)^\alpha.
    \end{split}
\end{align*}
Jensen's inequality applied to the concave function $x\mapsto x^\alpha$ implies,
\begin{align}\label{eq:estim (f-f)norm}
    \begin{split}
    \Big|\int_{W_n\in M(x,y)}\big(f(W_n\cdot\hat x)-&f(W_n\cdot \hat y)\big)\|W_n y\|^2 \d\mu^{\otimes n}\Big|\\
    &\leq k^{1-\alpha}m_\alpha(f)\left(\ee_{\ppch}\left(\frac{\|\wedge^2 W_n\|}{\tfrac1k\tr(W_n^*W_n)}\right)\right)^\alpha d(\hat x,\hat y)^\alpha\\
    &=k^{1-\alpha}m_\alpha(f) g(n)^\alpha d(\hat x,\hat y)^\alpha.
    \end{split}
\end{align}
A similar argumentation leads to the bound
\begin{align}\label{eq:estim (f-f)norm equal norm}
    \begin{split}
    \Big|\int_{\|W_nx\|=\|W_ny\|}\big(f(W_n\cdot\hat x)-&f(W_n\cdot \hat y)\big)\|W_n y\|^2 \d\mu^{\otimes n}\Big|\leq k^{1-\alpha}m_\alpha(f) g(n)^\alpha d(\hat x,\hat y)^\alpha
    \end{split}
\end{align}
for the third and last term of the right hand side of Equation~\eqref{eq:decompo Q+Q+R}.

Using Equations~\eqref{eq:estim (f-f)norm} and \eqref{eq:esim f(norm-norm)},
$$|Q_{x,y}(f)|\leq k^{1-\alpha}g(n)^\alpha \|f\|_\alpha d(\hat x,\hat y)^\alpha+k\|f\|_\infty d(\hat x,\hat y).$$
Since $d(\hat x,\hat y)\in [0,1]$, $d(\hat x,\hat y)\leq d(\hat x,\hat y)^\alpha$ and
\begin{align}\label{eq:bound on Q}
\frac{|Q_{x,y}(f)|}{d(\hat x,\hat y)^\alpha}\leq k^{1-\alpha}g(n)^\alpha \|f\|_\alpha +k\|f\|_\infty.
\end{align}
Applying this bound to $Q_{x,y}(f)$ and $Q_{y,x}(f)$, and the bound from Equation~\eqref{eq:estim (f-f)norm equal norm} to the first, second and third terms on the right hand side of Equation~\eqref{eq:decompo Q+Q+R} respectively, leads to,
$$\frac{|\Pi^n f(\hat x)-\Pi^n f(\hat y)|}{d(\hat x,\hat y)^\alpha}\leq 3 k^{1-\alpha}g(n)^\alpha \|f\|_\alpha +2k\|f\|_\infty.$$

Finally, $\|\Pi^n f\|_\infty\leq \|f\|_\infty$ and the definition of the Hölder norm yield the lemma.
\end{proof}

\begin{proof}[Proof of Theorem~\ref{thm:pur implique quasi-compact}]
    As already mentioned, it is sufficient to prove Items~\ref{it:Pi rel compact},~\ref{it:Pi bounded} and~\ref{it:Pi geom conv} and $\rho_\alpha(\Pi)=1$.

    For Item~\ref{it:Pi rel compact}, Lemma~\ref{Lemma} leads to the equicontinuity of the set $\Pi( \{ f \in C^\alpha(\sta,\cc): \| f \|_\alpha \leq 1 \} )$. Indeed for all $f\in  \{ f \in C^\alpha(\sta,\cc): \| f \|_\alpha \leq 1 \}$, since $\Vert f\Vert_\infty\leq \Vert f\Vert_\alpha$, we have
    \begin{align}\label{eq:uniform Holder coeff}
        \Vert \Pi f\Vert_\alpha\leq k^{1-\alpha}g(1)^\alpha+\left(2 \sqrt{2} k +1 \right)=K,
    \end{align}
    with $K$ independent of $f$. This yields the equicontinuity. 
    
    {
   It follows the image of the unit ball of $C^\alpha(\sta,\cc)$ by $\Pi$ is equicontinuous. Then, Arzelà–Ascoli Theorem implies the image of the unit ball of $C^\alpha(\sta,\cc)$ by $\Pi$ is relatively compact with respect to $\|.\|_\infty$ and \Cref{it:Pi rel compact} is proved.}
    
    As already mentioned, Item~\ref{it:Pi bounded} is a direct consequence of $\Pi$ being a Markov kernel.

    For Item~\ref{it:Pi geom conv}, first, $\Pi \one=\one$ implies $\rho_\alpha(\Pi)\geq 1$. Then, from Lemma~\ref{lem:g decreasing geom}, there exist $n_0\in \nn$ and $\lambda<1$ such that for $n\geq n_0$, $g(n)\leq \lambda^n$. Let $n\geq n_0$ be such that $r=(3k)^{\frac{1-\alpha}{n}}\lambda^\alpha<1$. Then, $3k^{1-\alpha}g(n)^\alpha\leq r^{n}<1$ and Lemma~\ref{Lemma} yields 
    $$\|\Pi^n f\|_\alpha\leq R\|f\|_\infty+ r^n\|f\|_\alpha$$
    with $r<1\leq \rho_\alpha(\Pi)$ and $R=2k+1$. 
    
    It remains to prove $\rho_\alpha(\Pi)\leq1$. 
    Lemmas~\ref{Lemma} and~\ref{lem:g decreasing geom} imply there exist $n_0\in \nn$ and $\lambda<1$ such that for any $n\geq n_0$,
    $$\|\Pi^n f\|_\alpha\leq 3k^{1-\alpha}\lambda^{\alpha n} \|f\|_\alpha+ (2k+1)\|f\|_\infty.$$
    Since $\|f\|_\infty\leq\|f\|_\alpha$,
    $$\|\Pi^n\|_\alpha^{\frac1n}\leq (3k^{1-\alpha}+2k+1)^{\frac{1}{n}}.$$
    Taking the limit $n\to\infty$ leads to $\rho_\alpha(\Pi)\leq 1$. Hence, $\rho_\alpha(\Pi)=1$ and the theorem is proved.
\end{proof}

\subsection{Peripheral spectrum}\label{sec:peripheral spec}

In \cite[\S 9]{He95}, the concepts of period and cycles are introduced for arbitrary quasi-compact Markov kernels. This article being in french and our model allowing us to explicitly prove the peripheral spectrum is a finite subgroup of $U(1)$ and simple, we prefer an explicit proof. It also has the upside of providing an explicit expression for the space $\mathcal F$ of Definition~\ref{DefQCompact}. The interested reader can prove the results, except the explicit description of $\mathcal F$, using \cite[\S 9]{He95}. {Note that related results appeared earlier in~\cite[\S~8]{krein1948linear}}.

First, since $\Pi$ is a contraction on $(C^0(\sta,\cc),\|.\|_\infty)$, its peripheral eigenvalues are semi-simple. This is expressed in the following lemma which is a consequence of the fact that $\sup_n\Vert \Pi^n\Vert<\infty$. In particular, the operator $\Pi$ is said to be of diagonal type \cite[Proposition III.1]{HeHe01}
\begin{lemma}\label{lem:semi-simple}
    Assume {\bf (Pur)} holds. Let $z\in  U(1)$ be an eigenvalue of $\Pi$. Then its algebraic and geometric multiplicities are equal.
\end{lemma}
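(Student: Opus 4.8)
The plan is to deduce semi-simplicity of the peripheral eigenvalues from the uniform power-boundedness of $\Pi$ on $(C^0(\sta,\cc),\|\cdot\|_\infty)$. First I would record that, since $\Pi$ is a Markov kernel, $\|\Pi^n f\|_\infty\le\|f\|_\infty$ for every $n$ and every $f$, so $\sup_n\|\Pi^n\|_\infty\le 1<\infty$. This is the only input we need besides general spectral theory; the Hölder setting will be reduced to the sup-norm setting at the end.

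Next I would argue by contradiction at the level of the Banach space $C^\alpha(\sta,\cc)$. Suppose $z\in U(1)$ is an eigenvalue of $\Pi$ whose algebraic multiplicity strictly exceeds its geometric multiplicity. By Theorem~\ref{QCompact} (quasi-compactness), $z$ is an isolated point of the spectrum and the associated spectral (Riesz) projection $P_z$ has finite rank; on its finite-dimensional range, $\Pi$ acts as $zI+N$ with $N$ the nilpotent part, and the hypothesis means $N\neq 0$. Pick $f$ in the range with $Nf\neq 0$; then for every $n$,
\[
\Pi^n f=(zI+N)^n f=\sum_{j\ge 0}\binom{n}{j}z^{n-j}N^j f,
\]
and since $|z|=1$ the dominant term as $n\to\infty$ is $n z^{n-1} Nf$, so $\|\Pi^n f\|_\alpha\ge \tfrac{n}{2}\|Nf\|_\alpha$ for $n$ large. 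Thus $\|\Pi^n\|_\alpha$ grows at least linearly in $n$. The remaining point is to transport this to the sup-norm: because $P_z$ has finite-dimensional range $\mathcal{V}\subset C^\alpha(\sta,\cc)$, on $\mathcal V$ the norms $\|\cdot\|_\alpha$ and $\|\cdot\|_\infty$ are equivalent (all norms on a finite-dimensional space are equivalent), so $\|\Pi^n f\|_\infty\ge c\,\|\Pi^n f\|_\alpha\ge c'\,n\,\|Nf\|_\alpha\to\infty$, contradicting $\|\Pi^n f\|_\infty\le\|f\|_\infty$. Hence $N=0$ and algebraic multiplicity equals geometric multiplicity.

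Alternatively, and perhaps more cleanly, I would invoke \cite[Proposition~III.1]{HeHe01} directly: a quasi-compact operator that is power-bounded (with respect to some norm that dominates a norm on the finite-dimensional peripheral part) is of diagonal type, i.e. all peripheral eigenvalues are semi-simple. The excerpt already points to this; one just needs to check the hypothesis $\sup_n\|\Pi^n f\|_\infty<\infty$ together with the comparison of $\|\cdot\|_\infty$ and $\|\cdot\|_\alpha$ on the finite-dimensional range of the peripheral spectral projection.

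The only genuine subtlety — the "hard part", though it is minor — is the norm comparison step: power-boundedness is with respect to $\|\cdot\|_\infty$, which is merely a continuous seminorm (in fact a norm, but a weaker one) on $C^\alpha$, while the eigenvalue statement lives in $C^\alpha$. Quasi-compactness rescues us: the spectral projection $P_z$ associated to a peripheral eigenvalue has finite rank, its range $\mathcal V$ sits inside $C^\alpha(\sta,\cc)$, and on a finite-dimensional space $\|\cdot\|_\alpha\le C_{\mathcal V}\|\cdot\|_\infty$ for some constant $C_{\mathcal V}$. Combining with $\|\Pi^n f\|_\infty\le\|f\|_\infty$ for $f\in\mathcal V$ gives $\sup_n\|\Pi^n|_{\mathcal V}\|_\alpha<\infty$, which forbids the nilpotent part and yields the claim. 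I would present this as a short paragraph rather than belabour it.
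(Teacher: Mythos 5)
Your argument is correct and is essentially the paper's own: the paper proves the lemma precisely from power-boundedness ($\Pi$ is a contraction on $(C^0(\sta,\cc),\|\cdot\|_\infty)$, so $\sup_n\|\Pi^n\|<\infty$) together with quasi-compactness, citing \cite[Proposition~III.1]{HeHe01} for the conclusion that $\Pi$ is of diagonal type — i.e.\ your second, ``cleaner'' route, with your first paragraph just unfolding the standard argument behind that citation. One cosmetic fix: $nz^{n-1}Nf$ is the dominant term only when $N^2f=0$, so either pick $f$ in the range of the last nonzero power of $N$ or simply note that $\|\Pi^nf\|_\alpha$ grows polynomially when $N\neq0$; in either case the contradiction with $\|\Pi^nf\|_\infty\leq\|f\|_\infty$, via norm equivalence on the finite-dimensional range of the Riesz projection, goes through as you state.
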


Now we define cycles and period.
\begin{definition}[Cycles of $\mu$]\label{def:cycles}
    A $\ell$-cycle of $\mu$ is a set of orthogonal subspaces $\{E_1,\dotsc,E_\ell\}$ of $E$, such that, 
    $$E=E_1\oplus\dotsb \oplus E_\ell$$
    and for $\mu$-almost every $v$ and $i\in \{1,\dotsc,\ell\}$, $vE_i\subset E_{i+1}$ with $E_{\ell+1}=E_1$.
\end{definition}
\begin{definition}[Period of $\mu$]\label{def:period}
The period of $\mu$ denoted $m$ is the maximal $\ell$ such that there exists a $\ell$-cycle of $\mu$.
\end{definition}
This definition corresponds to the definition of period for a quantum channel -- see \cite[Definition~3.1]{BFPP17}.

Using these definitions, we can prove the peripheral eigenvalues form a finite subgroup of $U(1)$.
\begin{lemma}
    \label{lem:finite subgroup}
    Assume {\bf (Pur)} and {\bf (Erg)} hold. Let $m$ be the period of $\mu$. Then, the peripheral spectrum of $\Pi$ is the set of $m^{\text{th}}$ roots of unity:
    $$\{ z\in \operatorname{spec}\Pi:|z|=1\}=\{e^{i\frac{r}{m}2\pi},r=0,\ldots,m-1\}.$$
\end{lemma}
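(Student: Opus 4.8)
The plan is to show the two inclusions between the peripheral spectrum of $\Pi$ and the group $\{e^{i\frac{r}{m}2\pi}: r=0,\dots,m-1\}$ of $m$-th roots of unity. Both directions go through an explicit correspondence between peripheral eigenfunctions of $\Pi$ and eigenvectors of the quantum channel $\Phi:X\mapsto\int v^*Xv\,\d\mu(v)$ (equivalently its dual $\Phi^*:X\mapsto \int vXv^*\,\d\mu(v)$) associated with peripheral eigenvalues. The key structural input, which I would quote from the spectral theory of quantum channels as developed in \cite{BFPP17} (and which is the content of the cycle/period notions just introduced), is that under \textbf{(Erg)} the peripheral spectrum of $\Phi$ restricted to the minimal invariant subspace $E$ consists exactly of the $m$-th roots of unity, each semisimple, and that the corresponding eigenprojections are built from the cyclic decomposition $E=E_1\oplus\dots\oplus E_m$ by the formula $X_r=\sum_{j} e^{-i\frac{r}{m}2\pi j}\pi_{E_j}$. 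The restriction to $E$ is legitimate here because, as in \cite{BFPP17}, \textbf{(Erg)} forces the invariant measure and all peripheral eigenfunctions to be supported on $\P(E)$.

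First I would establish the inclusion $\supseteq$, i.e.\ that every $m$-th root of unity is an eigenvalue of $\Pi$. Given a cycle $\{E_1,\dots,E_m\}$ and $r\in\{0,\dots,m-1\}$, set $X_r=\sum_{j=1}^m e^{-i\frac{r}{m}2\pi j}\pi_{E_j}$ and define $f_r(\hat x)=\langle x, X_r x\rangle$ (a well-defined function on $\P(E)$, extended by the same formula on $\sta$). A direct computation using $vE_j\subset E_{j+1}$ for $\mu$-a.e.\ $v$ gives $\Phi^*(X_r)=\int vX_rv^*\d\mu(v)$ proportional to... more precisely one checks $\Pi f_r(\hat x)=\int \langle vx, X_r vx\rangle\,\d\mu(v)=\langle x,\Phi(X_r)x\rangle$, wait — one must be careful: $\Pi f(\hat x)=\int f(v\cdot\hat x)\|vx\|^2\d\mu(v)=\int \frac{\langle vx,X_r vx\rangle}{\|vx\|^2}\|vx\|^2\d\mu(v)=\langle x,\Phi(X_r)x\rangle$, so $\Pi f_r=f_{\Phi(X_r)}$ where $\Phi$ acts on the operator; then the cyclic shift $vE_j\subset E_{j+1}$ yields $\Phi(X_r)=e^{i\frac{r}{m}2\pi}X_r$ on $E$, hence $\Pi f_r=e^{i\frac{r}{m}2\pi}f_r$. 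One checks $f_r\not\equiv 0$ on $\P(E)$. Since $f_r$ is Lipschitz, hence in every $C^\alpha(\sta,\cc)$, this gives the inclusion.

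For the reverse inclusion $\subseteq$: let $z\in U(1)$ be an eigenvalue of $\Pi$ with eigenfunction $f\in C^\alpha(\sta,\cc)$. The standard martingale/purification argument (as in \cite{BFPP17}, using that $(z^{-n}f(\hat x_n))$ is a bounded martingale and that \textbf{(Pur)} forces $\hat x_n$ to asymptotically concentrate) shows that $f$ must be, up to normalization, of the form $\hat x\mapsto \langle x,Xx\rangle$ on $\supp\nu\inv\subset\P(E)$ for some $X$ on $E$ with $\Phi(X)=zX$; so $z$ is a peripheral eigenvalue of $\Phi|_E$, which by the quantum channel theory is an $m$-th root of unity. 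The honest obstacle is precisely this step — linking an abstract peripheral eigenfunction of the Markov operator back to an eigenvector of $\Phi$; it uses Lemma~\ref{lem:semi-simple} (semisimplicity, so no Jordan blocks to worry about), the support result for $\nu\inv$, and the purification-driven concentration of quantum trajectories. I expect the proof to cite \cite{BFPP17} heavily for the channel-side facts (peripheral spectrum of $\Phi$ equals $m$-th roots of unity, structure of eigenprojections via cycles) and to carry out the $\Pi\leftrightarrow\Phi$ dictionary in detail, with the purification argument being the crux. Finally, combining both inclusions with Lemma~\ref{lem:semi-simple} to upgrade semisimple to simple (each peripheral eigenspace is one-dimensional because the corresponding $\Phi$-eigenspace on $E$ is, by minimality of $E$ and irreducibility) completes the proof.
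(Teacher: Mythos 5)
Your proposal has a genuine gap in the direction $\supseteq$. You build $X_r=\sum_j e^{-i\frac{r}{m}2\pi j}P_{E_j}$ from the orthogonal projectors onto the cyclic subspaces and set $f_r(\hat x)=\langle x,X_rx\rangle$, ``extended by the same formula on $\sta$''. The identity $\Pi f_r(\hat x)=\langle x,\Phi(X_r)x\rangle$ is fine, but $\Phi(X_r)=e^{i\frac{r}{m}2\pi}X_r$ is \emph{not} true as an equation in $\mathrm{M}_k(\cc)$ when $E\neq\cc^k$: the cyclic relation $vE_j\subset E_{j+1}$ only gives $P_E\,\Phi(P_{E_{j+1}})\,P_E=P_{E_j}$, while the blocks of $\Phi(P_{E_{j+1}})$ involving $E^\perp$ are generically nonzero (there is leakage from $E^\perp$ into $E$ under {\bf (Erg)}). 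Already for $m=1$ your candidate is $\hat x\mapsto\|P_Ex\|^2$, which is not $\Pi$-invariant: it vanishes on $\P(E^\perp)$ while $\Pi(\|P_E\cdot\|^2)(\hat x)=\int\|P_Evx\|^2\,\d\mu(v)>0$ there in general. So your $f_r$ is an eigenfunction only of the Markov operator restricted to $\P(E)$, and passing from an eigenvalue of that restriction to an eigenvalue of $\Pi$ on $C^\alpha(\sta,\cc)$ needs an extension argument you do not supply. This is exactly what the matrices $M_r$ of Proposition~\ref{prop:cycle operators} provide: they are genuine eigenvectors of $\Phi$ on all of $\cc^k$ ($\Phi(M_r)=M_{r-1}$, $\sum_r M_r=\id$, $M_rP_{E_r}=P_{E_r}$), encoding absorption from $E^\perp$, and the paper's eigenfunctions $f_l=\sum_r e^{i\frac{rl}{m}2\pi}\langle x,M_rx\rangle$ satisfy $\Pi f_l=e^{i\frac{l}{m}2\pi}f_l$ exactly on $\sta$.

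For the direction $\subseteq$ you propose a martingale/purification argument identifying every peripheral eigenfunction with a quadratic form $\langle x,Xx\rangle$, $\Phi(X)=zX$; you yourself flag this as the unproven crux, and indeed it is not carried out (the conclusion is true, but in the paper it only comes later, in Lemma~\ref{lem:charactrization of cF}, and is not the route to this lemma). The paper's argument is much shorter: by quasi-compactness peripheral spectral points are eigenvalues; the convergence in distribution of \cite[Theorem~1.1]{BFPP17} gives $\lim_n\frac1m\sum_{r=0}^{m-1}\Pi^{r+mn}f(\hat x)=\nu_{\mathrm{inv}}(f)$, which forces $\nu_{\mathrm{inv}}(f)=0$ for an eigenfunction with eigenvalue $z\neq1$; evaluating the same Ces\`aro limit using $\Pi f=zf$ along a subsequence with $z^{mn_k}\to1$ and a point where $f(\hat x)\neq0$ yields $\sum_{r=0}^{m-1}z^r=0$, i.e.\ $z^m=1$. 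So both halves of your plan need repair: replace the projectors by the $\Phi$-eigenvectors $M_r$ (or prove a lifting lemma from $\P(E)$ to $\sta$), and replace the unproven structure claim for eigenfunctions by an argument such as the Ces\`aro/convergence-in-distribution one.
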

\begin{proof}
    From Theorem~\ref{thm:pur implique quasi-compact}, $\Pi$ is quasi-compact. Therefore, the elements of modulus $1$ of its spectrum are eigenvalues. Since $\Pi \one=\one$, $1\in \{ z\in \operatorname{spec}\Pi:|z|=1\}$. Let $z$ be an eigenvalue of $\Pi$ with modulus $1$ distinct from $1$ and $f$ an associated eigenvector. Then, from the convergence in distribution shown in \cite[Theorem~1.1]{BFPP17}, we have for any $\hat x\in \sta$,
    $$\lim_{n\to\infty} \tfrac1m\sum_{r=0}^{m-1}\Pi^{r+mn}f(\hat x)=\nu_{inv}(f),$$
    where $\nu_{inv}$ is the unique $\Pi$-invariant probability measure. Since $z\neq 1$ and $\Pi f=zf$, using $\nu_{inv}\Pi=\nu_{inv}$, it implies, $\nu_{inv}(f)=0$. Then, we also have
    $$\lim_{n\to\infty} \tfrac1m\sum_{r=0}^{m-1}\Pi^{r+mn}f(\hat x)=\lim_{n\to\infty} \left(z^{mn}\tfrac{1}{m}\sum_{r=0}^{m-1} z^{r}\right)f(\hat x)=0.$$
    Choosing $\hat x$ such that $f(\hat x)\neq 0$ and taking the limit along a subsequence such that $\lim_{k\to\infty} z^{mn_k}=1$ leads to
    $$\sum_{r=0}^{m-1} z^{r}=0.$$
    It follows that $z$ is an $m^{\text{th}}$ root of unity.

    It remains to show that all the $m^{\text{th}}$ roots of unity are in $\{ z\in \operatorname{spec}\Pi:|z|=1\}$.
    For $l\in \{1,\dotsc,m\}$, let $$f_l:\hat x\mapsto \sum_{r=1}^{m} e^{i\frac{rl}{m} 2\pi} \langle x,M_r x\rangle$$ where the positive semi-definite matrices $M_r$ are the ones defined in Proposition~\ref{prop:cycle operators}. Note that the functions $(f_l)$ are well defined since the value $\langle x,M_r x\rangle=\tr(M_r\pi_{\hat x})$ does not depend on the representative of $\hat x$. Furthermore, since $\langle x,Ax\rangle=\tr(A\pi_{\hat x})$, by Hölder's inequality for Schatten's matrix norms, for $A$ positive semi-definite, $\hat x\mapsto \langle x,Ax\rangle$ is $\tr(A)$-Lipschitz and therefore $\alpha$-Hölder. As finite sums of $\alpha$-Hölder functions, the functions $f_l$ are $\alpha$-Hölder. 
    
    Recall the definition of $\Phi$ as the map from $\mat$ to $\mat$ defined by
    $$\Phi:X\mapsto \int_{\mat} v^* X v\ \d\mu(v).$$
    Following Proposition~\ref{prop:cycle operators}, $\Phi(M_r)=M_{r-1}$ with $M_0=M_m$. By definition of $\Pi$, we have
    \begin{eqnarray*}
      \Pi f_l(\hat x)&=&\sum_{r=1}^{m}e^{i\frac{rl}{m} 2\pi} \int_{\textrm{M}_k(\cc)}\langle vx,M_r vx\rangle\Vert vx\Vert \d\mu(v)\\
      &=&\sum_{r=1}^{m}e^{i\frac{rl}{m} 2\pi} \int_{\textrm{M}_k(\cc)}\langle  x,v^* M_r v\, x\rangle \d\mu(v)\\&=& \sum_{r=1}^{m}e^{i\frac{rl}{m} 2\pi} \langle x,\Phi(M_r) x\rangle\\&=&\sum_{r=1}^{m}e^{i\frac{(r+1)l}{m} 2\pi} \langle x,M_r x\rangle.  
    \end{eqnarray*}
    Hence, $\Pi f_l=e^{i\frac{l}{m}2\pi} f_l$ and then $e^{i\frac{l}{m}2\pi}\in  \{ z\in \operatorname{spec}\Pi:|z|=1\}$ and the lemma is proved.
\end{proof}

It remains to prove the peripheral spectrum is simple.
\begin{lemma}\label{lem:charactrization of cF}
    Assume {\bf (Pur)} and {\bf (Erg)} hold. Let $m$ be the period of $\mu$. Then, the subspace $\mathcal F$ spanned by the eigenvectors of $\Pi$ corresponding to eigenvalues of modulus $1$ is the linear span of the functions
    $$\hat x\mapsto \langle x,M_r x\rangle, \quad r\in \{1,\dotsc,m\}$$
    where the positive semi-definite matrices $M_r$ are the ones defined in Proposition~\ref{prop:cycle operators}. 

    In particular $\dim \mathcal F=m$ and the peripheral spectrum of $\Pi$ is simple.
\end{lemma}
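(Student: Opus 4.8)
The plan is to reduce the whole lemma to the single inequality $\dim\mathcal F\le m$, i.e.\ to simplicity of the peripheral spectrum; the reverse inequality and the explicit description of $\mathcal F$ are essentially already in hand. Indeed, by Lemma~\ref{lem:finite subgroup} the $m$ functions $f_l(\hat x)=\sum_{r=1}^m e^{i\frac{rl}{m}2\pi}\langle x,M_rx\rangle$, $l=0,\dots,m-1$, lie in $\mathcal F$ with $\Pi f_l=e^{i\frac lm2\pi}f_l$. The matrix $\big(e^{i\frac{rl}m2\pi}\big)_{r,l=1}^m$ is a Vandermonde matrix in the distinct $m$-th roots of unity, hence invertible, so $\operatorname{span}\{f_l:l\}=\operatorname{span}\{\hat x\mapsto\langle x,M_rx\rangle:r\}$; and these $m$ functions are linearly independent, since $\sum_rc_r\langle x,M_rx\rangle\equiv0$ forces $\sum_rc_rM_r=0$ (polarization over $\cc$), whence every $c_r=0$ because, by Proposition~\ref{prop:cycle operators}, the $M_r$ are nonzero with pairwise orthogonal ranges. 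Thus $\operatorname{span}\{\hat x\mapsto\langle x,M_rx\rangle\}\subseteq\mathcal F$ has dimension $m$, and it remains to establish $\dim\mathcal F\le m$.

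For the latter, I would first note that the space $\mathcal Q:=\{q_A:\hat x\mapsto\langle x,Ax\rangle\mid A\in\mat\}\subseteq C^\alpha(\sta,\cc)$ of ``quadratic forms'' is $\Pi$-invariant: from $q_A(v\cdot\hat x)\,\|vx\|^2=\langle x,v^*Avx\rangle$ one gets at once $\Pi q_A=q_{\Phi(A)}$. As $A\mapsto q_A$ is injective (polarization again), $\Pi|_{\mathcal Q}$ is conjugate to the quantum channel $\Phi$ on $\mat$, so $\mathcal F\cap\mathcal Q$ is carried by $A\mapsto q_A$ onto the peripheral eigenspace of $\Phi$, which by the analysis of Appendix~\ref{app:cycl} (Proposition~\ref{prop:cycle operators}) equals $\operatorname{span}\{M_r:r=1,\dots,m\}$, of dimension $m$. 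Hence $\dim(\mathcal F\cap\mathcal Q)=m$, and the lemma will follow if I prove $\mathcal F\subseteq\mathcal Q$, i.e.\ that every peripheral eigenfunction of $\Pi$ is a quadratic form. (An alternative, more probabilistic route would be to show directly that $\Pi^m$ has exactly $m$ ergodic invariant measures, using uniqueness of $\nu_{inv}$ and the cyclic decomposition of $E$; I find the route through $\mathcal Q$ and $\Phi$ cleaner.)

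So let $\Pi f=zf$ with $|z|=1$; by Lemma~\ref{lem:finite subgroup}, $z^m=1$. Put $F(x)=\|x\|^2 f(\hat x)$ for $x\neq0$ and $F(0)=0$, so $F(\lambda x)=|\lambda|^2F(x)$, $|F(x)|\le\|f\|_\infty\|x\|^2$, and iterating the eigenvalue equation gives $\int F(W_nx)\,\d\mu^{\otimes n}=z^nF(x)$. Rewriting against the probability measure $\ppch$ (with $\d\ppch|_{\outalg_n}=\tfrac1k\tr(W_n^*W_n)\,\d\mu^{\otimes n}$),
$$z^nF(x)=\int\Big\langle x,\tfrac{W_n^*W_n}{\tfrac1k\tr(W_n^*W_n)}\,x\Big\rangle\,f(W_n\cdot\hat x)\,\d\ppch,$$
an integrand bounded by $k\|f\|_\infty\|x\|^2$ uniformly in $n$ and $\omega$. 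By Lemma~\ref{lem:g decreasing geom}, $\ee_{\ppch}\big[\|\wedge^2W_n\|/\tfrac1k\tr(W_n^*W_n)\big]=g(n)\le\lambda^n$, so $\ppch$-a.s.\ $\|\wedge^2W_n\|/\tr(W_n^*W_n)\to0$, i.e.\ $W_n$ degenerates to rank one; using in addition the purification result of \cite{BFPP17} on convergence of the leading singular directions of $W_n$ (passing to a subsequence of multiples of $m$ if necessary), there are $\ppch$-a.s.\ random unit vectors $\phi_\infty,\psi_\infty$ with $W_n^*W_n/\tr(W_n^*W_n)\to|\phi_\infty\rangle\langle\phi_\infty|$ in operator norm and, on $\{\langle x,\phi_\infty\rangle\neq0\}$, $W_n\cdot\hat x\to\hat\psi_\infty$, while on the complementary event the integrand tends to $0$. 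Taking $n=mn'\to\infty$ so that $z^n=1$ and applying dominated convergence,
$$F(x)=k\,\ee_{\ppch}\big[\,|\langle x,\phi_\infty\rangle|^2\,f(\hat\psi_\infty)\,\big]=\langle x,A\,x\rangle,\qquad A:=k\,\ee_{\ppch}\big[f(\hat\psi_\infty)\,|\phi_\infty\rangle\langle\phi_\infty|\big],$$
so $f=q_A\in\mathcal Q$. I expect this last step to be the crux: the delicate points are upgrading the rank-one degeneration of $W_n$ to genuine convergence of its top singular directions (or replacing it by a careful subsequence/measurable-selection argument) and justifying the interchange of limit and integral, including the behaviour on the null-overlap set $\{\langle x,\phi_\infty\rangle=0\}$. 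Granting $\mathcal F\subseteq\mathcal Q$, we conclude $\operatorname{span}\{\hat x\mapsto\langle x,M_rx\rangle\}\subseteq\mathcal F=\mathcal F\cap\mathcal Q\cong\operatorname{span}\{M_r\}$, all of dimension $m$; hence $\mathcal F$ is the linear span of the $\hat x\mapsto\langle x,M_rx\rangle$, $\dim\mathcal F=m$, and every peripheral eigenvalue of $\Pi$ is simple.
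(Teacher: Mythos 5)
The first half of your argument is fine and close in spirit to the paper: the span of $\hat x\mapsto\langle x,M_rx\rangle$ lies in $\mathcal F$ and has dimension $m$ (DFT inversion of the eigenfunctions $f_l$ of Lemma~\ref{lem:finite subgroup}, plus polarization and the orthogonality of the ranges of the $M_r$), the space $\mathcal Q$ of quadratic forms is $\Pi$-invariant with $\Pi q_A=q_{\Phi(A)}$, and the lemma would indeed follow from $\mathcal F\subseteq\mathcal Q$. The gap is exactly where you locate it, and the route you sketch for that step would fail. Your dominated-convergence identification of $F(x)$ needs the $\ppch$-a.s.\ convergence of $W_{mn}\cdot\hat x$ to a random limit $\hat\psi_\infty$ on $\{\langle x,\phi_\infty\rangle\neq0\}$. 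No such convergence holds in general, and it is not what \cite{BFPP17} provides. Purification does give a.s.\ convergence of the bounded martingale $W_n^*W_n/\tr(W_n^*W_n)$ to a rank-one projector (that part is fine), but the direction $W_n\cdot\hat x$ also involves the left singular data of $W_n$, which keep being multiplied by fresh matrices $V_{n+1}$: the trajectory $(\hat x_n)$ converges only in distribution (to $\nu_{inv}$, or to the measures $\nu_r$ along the cyclic classes), not almost surely --- the nondegenerate CLT of this very paper presupposes that it keeps fluctuating. What \cite{BFPP17} gives instead is the asymptotic coupling in expectation $\ee_\nu\big(d(\hat x_{m(n+l)},\hat y_{mn}\circ\theta^{ml})\big)\le C\lambda^n$, where $\hat y_{mn}\circ\theta^{ml}$ is an $x$-independent \emph{moving} target, together with convergence of the laws $\ee^\rho(f\circ\theta^{ml})\to\ee^{\Phi^*_{|1|}(\rho)}(f)$; restricting to multiples of $m$ does not change this, so your limit object $\hat\psi_\infty$, and hence the matrix $A=k\,\ee_{\ppch}\big[f(\hat\psi_\infty)\,|\phi_\infty\rangle\langle\phi_\infty|\big]$, is not defined.

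Consequently $\mathcal F\subseteq\mathcal Q$ remains unproved, and this inclusion is the entire content of the lemma beyond Lemma~\ref{lem:finite subgroup}. The paper closes precisely this hole by a different mechanism: it never takes pointwise limits along trajectories, but uses semi-simplicity (Lemma~\ref{lem:semi-simple}) and quasi-compactness to realize the Riesz projector onto $\mathcal F$ as $\Pi_\infty=\lim_{n\to\infty}\Pi^{mn}$, and then identifies $\Pi_\infty f(\hat x)$ as a combination of the functions $\langle x,M_rx\rangle$ weighted by $\nu_r(f)$ via Lemma~\ref{lem:convergence Pim}, whose proof combines exactly the two ingredients above (coupling in expectation, uniformly in the shift, followed by convergence in law of the shifted measure $\pp^\rho$). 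Reading off $\mathcal F$ as the range of $\Pi_\infty$ then gives the statement. If you want to keep your framework, the honest fix is to prove $\mathcal F\subseteq\mathcal Q$ by computing $\lim_n\Pi^{mn}f$ in this averaged sense, i.e.\ to reprove Lemma~\ref{lem:convergence Pim}; as written, your proposal leaves the crux open and rests on an a.s.\ convergence claim that is false for generic quantum trajectories.
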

\begin{proof}
    From Lemma~\ref{lem:finite subgroup}, $\{ z\in \operatorname{spec}\Pi:|z|=1\}=\{e^{i\frac{r}{m}2\pi},r=0,\ldots,m-1\}$. Following Lemma~\ref{lem:semi-simple} all these eigenvalues are semi-simple. Then, quasi-compactness of $\Pi$ implies that $\Pi$ can be written as
    $$\Pi=\sum_{i=1}^{\dim(\mathcal F)}\lambda_iP_{\lambda_i}+T.$$
    In the above decomposition, the operators $P_{\lambda_i}$ are the Riesz projectors onto the eigenspaces of the eigenvalues $\lambda_i$ which belong to $\{e^{i\frac{r}{m}2\pi},r=0,\ldots,m-1\}$. We have $\rho_\alpha(T)<1$ and $TP_{\lambda_i}=P_{\lambda_i}T=0.$ Therefore we can define
    $\Pi_\infty=\lim_{n\to\infty} \Pi^{mn}$. This is the Riesz projector onto $\mathcal F$.

    Lemma~\ref{lem:convergence Pim} implies that for any function $f\in C^0(\sta,\cc)$,
    $$\Pi_\infty f(\hat x)=\frac1m\sum_{r=1}^m \langle x, M_r x\rangle \nu_r(f),$$
    where $\nu_r$ is the unique invariant probability measure of $\Pi^m$ such that $\nu_r(\{\hat x\in \sta: x\in E_r\})=1$. That yields the lemma.
\end{proof}
The combination of Lemmas~\ref{lem:finite subgroup} and~\ref{lem:charactrization of cF} imply Item~(\ref{Compact2}) of Theorem~\ref{QCompact} and we proved the other three statements in the previous section, so Theorem~\ref{QCompact} is proved.

\section{Proofs for tiltings}\label{sec:proofso}

\subsection{Proof of Theorem~\ref{Analytique}}\label{sec:proof Analytique}

The goal of this section is to prove Theorem~\ref{Analytique}. We follow the strategy of \cite{BL85}.

Let $z \in \cc$, $0< \alpha \leq 1$ and $h : \sta \to \mathbb R$ a $\alpha$-Hölder continuous function. Recall that the operator $\Pi_z$ is defined as follows. Let $f$ be a mesurable function and $\hat{x} \in \sta$, then
$$\Pi_z f(\hat{x}) = \int_{\mat} e^{z h(v \cdot \hat x)}f(v \cdot \hat x) \|vx\|^2 d \mu(v). $$
First, next proposition shows $\Pi_z$ is a bounded $C^\alpha(\sta,\cc)$ endomorphism.
\begin{proposition}\label{prop:piz}
For every $f \in C^\alpha(\sta,\cc)$, $\Pi_z f \in C^\alpha(\sta,\cc)$. Moreover, for any $z\in \cc$,
$$\sup_{f:\|f\|_\alpha=1}\Vert \Pi_z f\Vert_\alpha<\infty.$$
\end{proposition}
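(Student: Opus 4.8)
The plan is to show the two claims of Proposition~\ref{prop:piz} in order: first that $\Pi_z f$ is $\alpha$-Hölder for every $f\in C^\alpha(\sta,\cc)$, and then that the operator norm $\sup_{\|f\|_\alpha=1}\|\Pi_z f\|_\alpha$ is finite for each fixed $z$. Since $\Pi_z f=\Pi(e^{zh}f)$ and $h\in C^\alpha(\sta,\mathbb R)$ is bounded, the function $e^{zh}f$ belongs to $C^\alpha(\sta,\cc)$: indeed $\|e^{zh}f\|_\infty\le e^{|\Re z|\,\|h\|_\infty}\|f\|_\infty$, and $m_\alpha(e^{zh}f)\le \|f\|_\infty m_\alpha(e^{zh})+\|e^{zh}\|_\infty m_\alpha(f)$, with $m_\alpha(e^{zh})\le |z|\,e^{|\Re z|\,\|h\|_\infty}m_\alpha(h)$ from the Lipschitz bound on $t\mapsto e^{zt}$ on the compact interval $[-\|h\|_\infty,\|h\|_\infty]$. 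So $e^{zh}f\in C^\alpha(\sta,\cc)$ with $\|e^{zh}f\|_\alpha\le C_z\|f\|_\alpha$ for an explicit constant $C_z$ depending only on $z$, $\|h\|_\infty$, $m_\alpha(h)$.

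Then I would invoke Lemma~\ref{Lemma} (or rather the $n=1$ case, which is exactly that $\Pi$ is a bounded endomorphism of $C^\alpha(\sta,\cc)$): $\|\Pi g\|_\alpha\le 3k^{1-\alpha}g(1)^\alpha\|g\|_\alpha+(2k+1)\|g\|_\infty$ for any $g\in C^\alpha(\sta,\cc)$. Applying this with $g=e^{zh}f$ gives
\[
\|\Pi_z f\|_\alpha=\|\Pi(e^{zh}f)\|_\alpha\le \big(3k^{1-\alpha}g(1)^\alpha+2k+1\big)\,\|e^{zh}f\|_\alpha\le \big(3k^{1-\alpha}g(1)^\alpha+2k+1\big)\,C_z\,\|f\|_\alpha.
\]
This simultaneously shows $\Pi_z f\in C^\alpha(\sta,\cc)$ and that the operator norm is bounded by $(3k^{1-\alpha}g(1)^\alpha+2k+1)C_z<\infty$, which is exactly the proposition. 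Taking the supremum over $\|f\|_\alpha=1$ finishes the argument.

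There is essentially no hard part here: the only mild subtlety is the Hölder estimate for the product $e^{zh}f$, which reduces to the elementary inequality $|e^{za}-e^{zb}|\le |z|\,e^{|\Re z|\max(|a|,|b|)}|a-b|$ together with the Leibniz-type bound for the Hölder coefficient of a product. One could alternatively bypass Lemma~\ref{Lemma} entirely and redo the $M(x,y)$-decomposition argument of its proof directly with the weight $e^{zh(v\cdot\hat x)}\|vx\|^2$ in place of $\|vx\|^2$, at the cost of carrying an extra factor $e^{|\Re z|\,\|h\|_\infty}$ through each integral; but reusing Lemma~\ref{Lemma} via the factorization $\Pi_z=\Pi\circ(\text{mult. by }e^{zh})$ is cleaner. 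The constant obtained is not uniform in $z$, which is fine — uniformity on compact $z$-sets, needed for analyticity, is handled separately in Theorem~\ref{Analytique}.
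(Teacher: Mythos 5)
Your proposal is correct and follows essentially the same route as the paper: factor $\Pi_z=\Pi\circ(\text{multiplication by }e^{zh})$, observe that $e^{zh}$ is $\alpha$-Hölder via the Lipschitz continuity of the exponential on the compact range of $zh$, and then apply Lemma~\ref{Lemma} (the case $n=1$) together with sub-multiplicativity of $\|\cdot\|_\alpha$. Your version merely spells out the Leibniz-type bound on $m_\alpha(e^{zh}f)$ more explicitly than the paper does; the argument is the same.
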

\begin{proof}
Remark that $\Pi_z f = \Pi e^{zh} f$. The function $\hat x\mapsto zh(\hat x)$ is bounded since $h$ is continuous and defined on a compact set. On any compact subset of $\cc$, $w\mapsto e^w$ is Lipschitz continuous. Hence, $\hat x\mapsto e^{zh(\hat x)}$ is $\alpha$-H\"older. Lemma~\ref{Lemma} implies
$$\|\Pi_z f\|_\alpha\leq 3k^{1-\alpha} g(1)^\alpha\|e^{z h}f\|_\alpha+(2k+1)e^{|\Re(z)|\|h\|_\infty}\|f\|_\infty.$$
Since the norm $\|. \|_\alpha$ is sub-multiplicative and $\|f\|_\alpha\geq \|f\|_\infty$, for any $f$ such that $\|f\|_\alpha=1$, we have
$$\|\Pi_z f\|_\alpha\leq 3k^{1-\alpha} g(1)^\alpha\|e^{z h}\|_\alpha+(2k+1)e^{|\Re(z)|\|h\|_\infty}$$
and the proposition is proved.
\end{proof}

Now that we have established that $\Pi_z$ is bounded for all $z\in\mathbb C$. We can attack the problem of analyticity which is the content of Theorem~\ref{Analytique}. To this end, we introduce the operators $\Pi_n$, defined as follows. For $n \in \nn$ and for $f : \sta \to \mathbb C$ a measurable function and $\hat{x} \in \sta$
$$ \Pi_n f(\hat{x}) = \int_{\mat} h(v \cdot \hat x)^n f(v \cdot \hat x) \|vx\|^2 d \mu(v). $$
The main point is then to show the expansion in series
$$\Pi_z = \sum_{n=0}^{\infty} \frac{z^n}{n!}\Pi_n,$$
with a convergence in norm. It is a consequence of the convergence of the series $\sum_{n=0}^{\infty} \frac{|z|^n}{n!} \|\Pi_n\|_\alpha$. Fubini's Theorem implies the equality of the series with $\Pi_z$.

\begin{proof}[Proof of Theorem~\ref{Analytique}]
Since $h$ and $f$ are $\alpha$-Hölder and bounded, $h^nf$ is $\alpha$-Hölder and bounded. From Lemma~\ref{Lemma}, using $\Pi_n f = \Pi h^n f$, we obtain,
\begin{align*}
    \|\Pi_n f\|_\alpha \leq 3k^{1-\alpha}g(1)^{\alpha} \| h^nf \|_\alpha + (2  k +1) \| h^n f \|_{\infty}.
\end{align*}
The sub-multiplicativity of the norms $\|.\|_\alpha$ and $\|.\|_\infty$ implies,
\begin{align*}
    \|\Pi_n f\|_\alpha \leq 3k^{1-\alpha}g(1)^{\alpha} \| h\|_\alpha^n\ \|f \|_\alpha + (2  k +1) \|h\|_\infty^n\  \|f\|_{\infty}.
\end{align*}
Since $\|f\|_\infty\leq \|f\|_\alpha$ and $x\mapsto x^n$ is increasing for non negative $x$,
\begin{align*}
    \|\Pi_n f\|_\alpha \leq \left(3k^{1-\alpha}g(1)^{\alpha} + (2  k +1) \right)\| h\|_\alpha^n\ \|f \|_\alpha.
\end{align*}
Then the series $\sum_{n=0}^{\infty} \frac{|z|^n}{n!} \|\Pi_n\|_\alpha$ is convergent for any $z\in \cc$ and the map $z \mapsto \Pi_z$ is analytic on $\cc$.   
\end{proof}

\subsection{Proof of Theorem~\ref{AnalyticLog}}\label{sec:proof AnalyticLog}
{
Recall that in this context, we assume there exist $\tau \in (0,2)$ and $\delta> 0$ such that $$\int_{\mat} \|v\|^{2 - \tau}d \mu(v) < + \infty\,\,\textrm{and}\,\, \int_{\mat}\|v\|^{2+\delta} d \mu(v) < + \infty.$$ 

Then for $z$ such that $\Re(z) \in (-\tau,\delta)$, we study $\Gamma_z$ defined by : 

\begin{equation}\label{eq:DefPreuveTH33}
     \Gamma_z f(\hat{x}) = \int_{\mat} f(v \cdot \hat x) e^{z\log\|v x\|}\|vx\|^{2} d \mu(v),
\end{equation}
for any continuous function $f$, and any $\hat x \in \sta$. When $\Vert vx\Vert=0$, we extend by continuity and set $e^{z\log\|vx\|}\|vx\|^2=0$. Throughout this section, except when stated otherwise, we set $\alpha=1-\tau/2$.

The proof of Theorem \ref{AnalyticLog} is based on \cite[Theorem 3.1]{arendt2000vector}.
\begin{theorem}[Theorem~3.1 in \cite{arendt2000vector}]\label{Th.Arrendt}
Let $\varphi : \mathfrak U \to X$ be a locally bounded function from an open set $\mathfrak U \subseteq \mathbb{C}$ to a complex Banach space $X$. Let $S'$ be a separating subset of the dual space $X'$,  which means that for each $x \in X\setminus\{0\}$, there exists $x' \in S'$ such that $x'(x) \neq 0$. 

If the scalar-valued function $x' \circ \varphi : \mathfrak U \to \cc$ is analytic for each $x' \in S'$, then $\varphi$ is analytic.
\end{theorem}

We intend to apply this theorem with  
$
\mathfrak{U} = \{z \in \mathbb{C} : \Re(z) \in (-\tau, \delta)\},$
$X = \mathcal{L}(C^\alpha(\sta, \mathbb{C}))$, the space of bounded endomorphisms of $C^\alpha(\sta, \mathbb{C})$, and  
$\varphi:z\mapsto \Gamma_z$. To prove local boundedness, we establish that $\Gamma_z$ is bounded on the appropriate space. The proof relies on the fact that, for $\Re(z)>0$ and any $\alpha\in (0,\min(1,\Re(z/2))$, the mapping  
\[
\hat{x} \mapsto \|Ax\|^z f(A \cdot \hat{x})
\]
is $\alpha$-Hölder for any $\alpha$-Hölder function $f$ with an appropriate bound on the $\|A\|$-dependent coefficient.
}
\begin{lemma}
    \label{lem:vxf(vx) holder}
    Let $z\in \cc$ be such that $\Re(z)>0$. Fix $\alpha\in (0,\min(1,\Re(z/2)))$ and $f\in C^\alpha(\sta,\cc)$. Then for any matrix $A\in \mat$, the function $f_A:\hat x\mapsto e^{z\log \|Ax\|}f(A\cdot \hat x)$ and $f_A(\hat x)=0$ whenever $Ax=0$, is $\alpha$-Hölder continuous with
    $$\|f_A\|_\alpha\leq 2^{\alpha^*}\frac{|z/2|}{\alpha^*}\|A\|^{\Re(z)}\|f\|_\alpha$$
    with $\alpha^*=\min(1,\Re(z/2))$.
\end{lemma}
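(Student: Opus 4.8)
The plan is to estimate separately the sup-norm and the H\"older coefficient of $f_A$, reducing everything to a pointwise bound on the scalar function $t\mapsto t^{z/2}$ on $\rr_+$ (where we substitute $t=\|Ax\|^2$). For the sup-norm, $|f_A(\hat x)|=\|Ax\|^{\Re(z)}|f(A\cdot\hat x)|\leq \|A\|^{\Re(z)}\|f\|_\infty$, which is already below the claimed bound since $2^{\alpha^*}|z/2|/\alpha^*\geq 1$. The work is therefore entirely in the H\"older coefficient. Writing $f_A(\hat x)-f_A(\hat y)=\big(\|Ax\|^z-\|Ay\|^z\big)f(A\cdot\hat x)+\|Ay\|^z\big(f(A\cdot\hat x)-f(A\cdot\hat y)\big)$ on the set where $Ax,Ay\neq 0$ (the remaining cases where one of them vanishes being handled by continuity and a one-term version of the same estimate), I would bound the two pieces as follows.

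For the second piece, I would use the H\"older continuity of $f$ together with the contraction-type identity $d(A\cdot\hat x,A\cdot\hat y)=\|\wedge^2 A\,(x\wedge y)\|/(\|Ax\|\|Ay\|)$ already recorded in the excerpt, giving $|f(A\cdot\hat x)-f(A\cdot\hat y)|\leq m_\alpha(f)\|\wedge^2 A\|^\alpha d(\hat x,\hat y)^\alpha/(\|Ax\|^\alpha\|Ay\|^\alpha)$; multiplying by $\|Ay\|^{\Re(z)}$ and using $\|\wedge^2 A\|\leq\|A\|^2$, $\|Ay\|\leq\|A\|$, and the constraint $\Re(z)\geq 2\alpha$ (so $\|Ay\|^{\Re(z)-\alpha}\leq\|A\|^{\Re(z)-\alpha}$ and $\|Ax\|^{-\alpha}\leq\|Ay\|^{-\alpha}$ after symmetrizing, or simply bounding $\min(\|Ax\|,\|Ay\|)$ below appropriately) produces a clean $\|A\|^{\Re(z)}m_\alpha(f)d(\hat x,\hat y)^\alpha$ bound with a constant of size $O(1)$. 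The first piece is the delicate one: I need $\big|\|Ax\|^z-\|Ay\|^z\big|\leq C\|A\|^{\Re(z)}d(\hat x,\hat y)^\alpha$. Setting $s=\|Ax\|^2$, $t=\|Ay\|^2$, this is a bound on $|s^{z/2}-t^{z/2}|$; since $u\mapsto u^{z/2}$ is $\alpha^*$-H\"older on $[0,1]$ after normalization (with $\alpha^*=\min(1,\Re(z/2))$ the exponent up to which $u\mapsto u^{z/2}$ retains H\"older regularity at the origin) with constant $2^{\alpha^*}|z/2|/\alpha^*$ — the standard estimate $|u^{z/2}-v^{z/2}|\leq (|z/2|/\alpha^*)|u-v|^{\alpha^*}(\max(u,v))^{\Re(z/2)-\alpha^*}$, symmetrized — I factor out $\|A\|^{2\Re(z/2)}=\|A\|^{\Re(z)}$ and reduce to $|\,\|Ax\|^2-\|Ay\|^2\,|\leq \|A\|^2 d(\hat x,\hat y)$, which follows from $\|Ax\|^2-\|Ay\|^2=\tr(A^*A(\pi_{\hat x}-\pi_{\hat y}))$, H\"older's inequality for Schatten norms, and $d(\hat x,\hat y)=\|\pi_{\hat x}-\pi_{\hat y}\|$ exactly as in the proof of Lemma~\ref{Lemma}. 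Finally $d(\hat x,\hat y)^{\alpha^*}\leq d(\hat x,\hat y)^{\alpha}$ since $d\leq 1$ and $\alpha\leq\alpha^*$.

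The main obstacle, and the reason the constant takes the precise form $2^{\alpha^*}|z/2|/\alpha^*$, is the behavior of $t\mapsto t^{z/2}$ near $t=0$: for complex $z$ with small positive real part this function is only H\"older of exponent $\Re(z/2)$, not Lipschitz, so one cannot use a naive mean-value bound and must invoke the sharp one-variable H\"older estimate for power functions (with the $1/\alpha^*$ blow-up as $\alpha^*\to 0$). I expect this scalar lemma to be isolated and proved in Appendix~\ref{app:function}, as the excerpt announces, and I would simply cite it here. Assembling the two pieces, collecting constants, and checking that $3k^{1-\alpha}$-type factors do not appear (they don't, since we work with a single matrix $A$ rather than $\Pi$) then yields $m_\alpha(f_A)\leq 2^{\alpha^*}(|z/2|/\alpha^*)\|A\|^{\Re(z)}\|f\|_\alpha$, and combining with the sup-norm bound gives $\|f_A\|_\alpha\leq 2^{\alpha^*}(|z/2|/\alpha^*)\|A\|^{\Re(z)}\|f\|_\alpha$ as claimed.
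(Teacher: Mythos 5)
Your proposal is correct and follows essentially the same route as the paper's proof: the same two-term decomposition of $f_A(\hat x)-f_A(\hat y)$, the scalar H\"older estimate for $t\mapsto e^{(z/2)\log t}$ (the paper's Lemma~\ref{vxz}) combined with the Lipschitz bound on $\hat x\mapsto \|Ax\|^2$ (Lemma~\ref{lem:norm vx Lipschitz}) for the term with $\|Ax\|^z-\|Ay\|^z$, and the projective contraction identity together with $\Re(z)\geq 2\alpha$ for the term with $f(A\cdot\hat x)-f(A\cdot\hat y)$. The only slip is cosmetic: the Lipschitz constant of $\hat x\mapsto\|Ax\|^2$ delivered by the Schatten--H\"older argument is $2\|A\|^2$ rather than $\|A\|^2$, and it is exactly this factor $2$, raised to the power $\alpha^*$, that produces the $2^{\alpha^*}$ in the stated constant, so the claimed bound still follows.
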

\begin{proof}
    First, from the sup-norm submultiplicativity, $\|f_A\|_\infty\leq \|A\|^{\Re(z)}\|f\|_\infty$.

    Second, let $\hat x,\hat y\in \sta$ be distinct. We use the shorthand $t^z$ for $e^{z\log t}$. Without loss of generality we can assume $\|Ax\|\geq \|Ay\|$. Then, triangular inequality implies,
    \begin{align}\label{eq:split f_A}
        |f_A(\hat x)-f_A(\hat y)|\leq \left|(\|Ax\|^z-\|Ay\|^z)f(A\cdot \hat x)\right|+\|Ay\|^{\Re(z)}\left|f(A\cdot \hat y)-f(A\cdot \hat x)\right|
    \end{align}
    where $0\times f=0$ even when the argument of $f$ may be undefined.

    We shall apply Lemmas~\ref{lem:norm vx Lipschitz} and~\ref{vxz}. Namely, we apply Lemma~\ref{vxz} with $K=[0,\|A\|^2]$ and,  $\alpha^*$ and $\beta$, set to $\alpha^*=\min(1,\Re(z/2))$ and $\beta=\max(1,\Re(z/2))$ respectively,
   \begin{eqnarray*}
       |\|Ax\|^z-\|Ay\|^z|&=&|(\|Ax\|^{2})^{z/2}-(\|Ay\|^{2})^{z/2}|
       \\&\leq&\frac{|z/2|}{\alpha^*}\sup_{[0,\Vert A\Vert^2]}t^{\beta-1}|\|Ax\|^2-\|Ay\|^2|^{\alpha^*}\quad(\textrm{Lemma}\,\,~\ref{vxz})
       \\&\leq& 2^{\alpha^*}\frac{|z/2|}{\alpha^*}\|A\|^{2(\beta-1)}\Vert A\Vert^{2\alpha^*}d(\hat x,\hat y)^{\alpha^*} \quad(\textrm{Lemma}\,\,~\ref{lem:norm vx Lipschitz})\\
       &=&2^{\alpha^*}\frac{|z/2|}{\alpha^*}\|A\|^{\Re(z)}d(\hat x,\hat y)^{\alpha^*}. 
   \end{eqnarray*}
    Since $\alpha\leq \alpha^*$ and $d(\hat x,\hat y)\leq 1$, 
    \begin{align}
        \label{eq:bound (Axz-Ayz)f(Ax)}
        \left|(\|Ax\|^z-\|Ay\|^z)f(A\cdot \hat x)\right|\leq 2^{\alpha^*}\frac{|z/2|}{\alpha^*}\|A\|^{\Re(z)} \|f\|_\infty d(\hat x,\hat y)^\alpha.
    \end{align}

    For the second term on the right hand side of Equation~\eqref{eq:split f_A}, we can assume $\|Ay\|>0$ and therefore $\|Ax\|>0$. It follows from $d(A\cdot \hat x,A\cdot \hat y)=\frac{\|\wedge^2 A\ x\wedge y\|}{\|Ax\|\|Ay\|}$,
    $$\|Ay\|^{\Re(z)}\left|f(A\cdot \hat y)-f(A\cdot \hat x)\right|\leq \|Ay\|^{\Re(z)}m_\alpha(f) \frac{\|\wedge^2 A\|^\alpha}{\|Ax\|^\alpha\|Ay\|^\alpha}\|x\wedge y\|^\alpha.$$
    Since $\|Ax\|\geq \|Ay\|$, $t\mapsto t^\alpha$ is non decreasing, $\|x\wedge y\|=d(\hat x,\hat y)$ and $\|\wedge^2 A\|\leq \|A\|^2$,
    $$\|Ay\|^{\Re(z)}\left|f(A\cdot \hat y)-f(A\cdot \hat x)\right|\leq \|Ay\|^{\Re(z)-2\alpha}\|A\|^{2\alpha}m_\alpha(f)d(\hat x,\hat y)^\alpha.$$
    By definition of $\alpha$, $\Re(z)-2\alpha\geq 0$, thus $t\mapsto t^{\Re(z)-2\alpha}$ is non decreasing. Hence,
    \begin{align}
        \label{eq:bound Ayz(f(Ay)-f(Ax))}
        \|Ay\|^{\Re(z)}\left|f(A\cdot \hat y)-f(A\cdot \hat x)\right|\leq \|A\|^{\Re(z)} m_\alpha(f) d(\hat x,\hat y)^\alpha.
    \end{align}
    It follows from Equations~\eqref{eq:bound (Axz-Ayz)f(Ax)} and \eqref{eq:bound Ayz(f(Ay)-f(Ax))}, $m_\alpha(f_A)\leq \|A\|^{\Re(z)} (m_\alpha(f)+2^{\alpha^*}\frac{|z/2|}{\alpha^*}\|f\|_\infty)$. Since $2^{\alpha^*}\frac{|z/2|}{\alpha^*}\geq 1$, it follows,
    $$m_\alpha(f_A)\leq \|A\|^{\Re(z)}2^{\alpha^*}\frac{|z/2|}{\alpha^*}\|f\|_\alpha.$$
    and
    $$\|f_A\|_\infty\leq 2^{\alpha^*}\frac{|z/2|}{\alpha^*} \|A\|^{\Re(z)}\|f\|_\infty.$$
    Hence,
    $$\|f_A\|_\alpha\leq 2^{\alpha^*}\frac{|z/2|}{\alpha^*} \|A\|^{\Re(z)}\|f\|_\alpha$$
    and the lemma is proved.
\end{proof}
{ Now we can prove local boundedness.}
\begin{proposition}\label{Bounded}
    Let $\alpha =  1-\frac{\tau}{2}$. Then, for every  $z$ such that $\Re(z) \in (-\tau,\delta)$,  $\Gamma_z$ is bounded on $C^{\alpha}(\sta,\cc)$.  More precisely, 
  $$\|\Gamma_z\|_\alpha \leq 2^{\alpha^*}\frac{|1+z/2|}{\alpha^*} \int_{\mat} \|v\|^{\Re(z)+2} d \mu(v)$$
  with $\alpha^*=\min(1,1+\Re(z/2))$.
\end{proposition}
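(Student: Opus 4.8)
The plan is to write $\Gamma_z f = \Pi(g_v f)$-style, but more precisely to notice that $\Gamma_z f(\hat x)=\int_{\mat} f_v(\hat x)\,\|vx\|^2\,\d\mu(v)$ where $f_v(\hat x)=e^{z\log\|vx\|}f(v\cdot\hat x)$ — this is exactly the function controlled by Lemma~\ref{lem:vxf(vx) holder}, but only when $\Re(z)>0$. So the first move is to split $z$ into a "safe'' exponent and a remainder: write $e^{z\log\|vx\|}\|vx\|^2 = e^{(z+2)\log\|vx\|}$ and treat $w:=z+2$ as the effective exponent, which has $\Re(w)=\Re(z)+2\in(2-\tau,2+\delta)$, a quantity bounded away from $0$ from below by $2-\tau>0$. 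Thus $\Gamma_z f(\hat x)=\int_{\mat} e^{w\log\|vx\|}f(v\cdot\hat x)\,\d\mu(v)=\int_{\mat}(f)_v^{(w)}(\hat x)\,\d\mu(v)$ where $(f)_v^{(w)}$ is the function $f_A$ of Lemma~\ref{lem:vxf(vx) holder} with $A=v$ and exponent $w$. Since $\Re(w)>0$ we may apply that lemma, and the admissible Hölder exponents are $\alpha\in(0,\min(1,\Re(w/2)))=(0,\min(1,1+\Re(z/2)))$. For $\Re(z)\in(-\tau,\delta)$ with $\tau<2$ one has $1+\Re(z/2)>1-\tau/2$, so $\alpha=1-\tau/2$ lies in the allowed range; this is where the constraint $\alpha=1-\tau/2$ comes from.

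The second step is to integrate the pointwise bound. Lemma~\ref{lem:vxf(vx) holder} (applied with exponent $w=z+2$, so that "$|z/2|$'' there becomes $|w/2|=|1+z/2|$ and "$\alpha^*$'' becomes $\min(1,\Re(w/2))=\min(1,1+\Re(z/2))$) gives, for each fixed $v$,
$$\big\|(f)_v^{(w)}\big\|_\alpha\ \leq\ 2^{\alpha^*}\frac{|1+z/2|}{\alpha^*}\,\|v\|^{\Re(z)+2}\,\|f\|_\alpha .$$
Because $\|\cdot\|_\alpha$ is a norm, $\|\Gamma_z f\|_\alpha=\big\|\int (f)_v^{(w)}\,\d\mu(v)\big\|_\alpha\le\int \big\|(f)_v^{(w)}\big\|_\alpha\,\d\mu(v)$ (a Bochner/triangle-inequality-under-the-integral step, legitimate once one checks measurability of $v\mapsto (f)_v^{(w)}$ into $C^\alpha$ and integrability of its norm, which the previous display provides). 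Plugging the bound in yields
$$\|\Gamma_z f\|_\alpha\ \leq\ 2^{\alpha^*}\frac{|1+z/2|}{\alpha^*}\Big(\int_{\mat}\|v\|^{\Re(z)+2}\,\d\mu(v)\Big)\|f\|_\alpha ,$$
and the finiteness of the integral is exactly the moment hypothesis: $\Re(z)+2\in(2-\tau,2+\delta)$, and $\int\|v\|^{2-\tau}\d\mu(v)<\infty$, $\int\|v\|^{2+\delta}\d\mu(v)<\infty$ together with convexity of $s\mapsto\int\|v\|^s\d\mu$ (or simply $\|v\|^s\le\|v\|^{2-\tau}+\|v\|^{2+\delta}$ for $s$ in that range, after splitting $\{\|v\|\le1\}$ and $\{\|v\|>1\}$) controls all intermediate exponents. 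This gives both $\Gamma_z f\in C^\alpha(\sta,\cc)$ and the stated operator-norm bound.

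I expect the only genuinely delicate point to be the boundary/degenerate behaviour $\|vx\|=0$: one must verify that the extension "$e^{z\log\|vx\|}\|vx\|^2=0$ when $vx=0$'' is consistent with the continuity claim in Lemma~\ref{lem:vxf(vx) holder} (it is, since $\Re(w)=\Re(z)+2>0$ forces $\|vx\|^{w}\to0$), and that this does not spoil the Hölder estimate near the zero set — but Lemma~\ref{lem:vxf(vx) holder} already incorporates this case, so the work reduces to invoking it correctly with the shifted exponent. A secondary routine check is the measurability of $v\mapsto (f)_v^{(w)}\in C^\alpha(\sta,\cc)$ needed to justify moving the norm inside the integral; this follows from continuity of $(v,\hat x)\mapsto e^{w\log\|vx\|}f(v\cdot\hat x)$ off the zero set together with the uniform Hölder bound, via a standard dominated-convergence/separability argument. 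Everything else is bookkeeping with the constants.
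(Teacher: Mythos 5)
Your proposal is correct and follows essentially the same route as the paper: rewrite the kernel as $e^{(z+2)\log\|vx\|}$, apply Lemma~\ref{lem:vxf(vx) holder} with the shifted exponent $w=z+2$ (noting $\alpha=1-\tau/2<\min(1,1+\Re(z/2))$), and integrate the resulting pointwise Hölder bound against $\mu$, with the moment hypothesis controlling $\int\|v\|^{\Re(z)+2}\d\mu(v)$. The extra remarks on measurability and on interpolating the moments are fine but not needed beyond what the paper's one-line triangle-inequality step already uses.
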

\begin{proof}
By Lemma~\ref{lem:vxf(vx) holder}, for $\mu$-almost every $v$, $f_v:\hat x\mapsto \|vx\|^{2+z}f(v\cdot \hat x)$ is $\alpha$-Hölder with
\begin{align}
    \label{eq:bound f_v}
    \|f_v\|_\alpha\leq 2^{\alpha^*}\frac{|1+z/2|}{\alpha^*}\|v\|^{2+\Re(z)}\|f\|_\alpha.
\end{align}
The triangular inequality implies,
$$\|\Gamma_z f\|_\alpha\leq \int_{\mat} \|f_v\|_\alpha\d\mu.$$
Hence, bound \eqref{eq:bound f_v} and the $\mu$-integrability of $v\mapsto \|v\|^{2+\Re(z)}$ yield the proposition.
\end{proof}
{We can conclude the proof of Theorem \ref{AnalyticLog}.
\begin{proof}[Proof of Theorem \ref{AnalyticLog}]
Let $\mathfrak U=\{z\in\mathbb C:\Re(z)\in(-\tau,\delta)\}$, $X=\mathcal L(C^\alpha(\sta,\cc))$ and $S' \subset X'$ the set of the functionals $\varphi_{f, \hat x} \in X'$ defined by the relation:
$$ \varphi_{f,\hat x} : \Psi \mapsto (\Psi f)(\hat x),$$
where $f \in C^\alpha(\sta,\cc)$ and $\hat x \in \sta$. Let $\Psi\in \mathcal L(C^\alpha(\sta,\cc))$ and assume $\varphi_{f,\hat x}(\Psi)=0$ for any $f\in C^\alpha(\sta,\cc)$ and $\hat x\in \sta$. Then $\Psi f=0$ for any $f\in C^\alpha(\sta,\cc)$, thus $\Psi=0$. It follows that $S'$ is separating. 

For every $f \in C^\alpha(\sta,\cc)$, $\hat x \in \sta$ and $v\in \supp\mu$, the function $z \mapsto f(v\cdot\hat x)e^{z\log\|vx\|}\|vx\|^2$ is analytic, $\mu$-measurable and dominated by a $z$ independent $\mu$-integrable function on $\mathfrak U$. Thus, by the theorem of holomorphy under the integral sign, $z\mapsto \Gamma_zf(\hat x)$ is analytic on $\mathfrak U$. Since, \Cref{Bounded} ensures $z \mapsto \Gamma_z$ is locally bounded, \Cref{Th.Arrendt} implies $z \mapsto \Gamma_z$ is analytic on $\{z\in\mathbb C:\Re(z)\in(-\tau,\delta)\}$.
\end{proof}
}
\section{Proofs of Limit Theorems}\label{sec:limitprooof}

\subsection{Proofs of Central Limit Theorems}\label{sec:proof clt}

The Berry-Esseen bounds in Theorems~\ref{thm:clt1} and~\ref{thm:clt log} are expressed without reference in the constant to the Hölder operator norm of $\nu$. This is a consequence of the following lemma.
\begin{lemma}\label{lem:holder norm probability}
    Let $\nu$ be a probability measure on $\sta$. Then,
    $$\|\nu\|_\alpha=\sup_{f:\|f\|_\alpha=1}\nu(f)=1.$$
\end{lemma}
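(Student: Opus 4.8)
### Plan

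The statement to prove is that for any probability measure $\nu$ on $\sta$, viewed as a continuous linear functional on $C^\alpha(\sta,\cc)$, one has $\|\nu\|_\alpha = \sup_{\|f\|_\alpha = 1}\nu(f) = 1$. The plan is to show the two inequalities separately. For the upper bound $\|\nu\|_\alpha \leq 1$, I would simply note that for any $f\in C^\alpha(\sta,\cc)$,
\[
|\nu(f)| = \left|\int_{\sta} f\,\d\nu\right| \leq \int_{\sta} |f|\,\d\nu \leq \|f\|_\infty \leq \|f\|_\alpha,
\]
using that $\nu$ is a probability measure (total mass $1$, non-negative) and that the Hölder norm dominates the sup norm. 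Taking the supremum over $\|f\|_\alpha = 1$ gives $\|\nu\|_\alpha \leq 1$.

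For the matching lower bound, I would test against the constant function $f \equiv \one$. Since $\sta$ is compact, the constant function is trivially $\alpha$-Hölder with $m_\alpha(\one) = 0$, hence $\|\one\|_\alpha = \|\one\|_\infty + m_\alpha(\one) = 1$. Then $\nu(\one) = \int_{\sta} 1\,\d\nu = 1$ because $\nu$ is a probability measure. Therefore the supremum defining $\|\nu\|_\alpha$ is at least $1$, and combined with the upper bound we get $\|\nu\|_\alpha = 1$, with the supremum attained (so it is a genuine maximum).

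There is essentially no obstacle here: the only minor point to keep straight is that the "norm" $\|\nu\|_\alpha$ is the operator/dual norm of $\nu$ acting as a functional on $C^\alpha$, and that the first displayed equality in the lemma, $\|\nu\|_\alpha = \sup_{\|f\|_\alpha=1}\nu(f)$ (without absolute value), follows because $\nu$ is real-valued on real functions and one may replace $f$ by $e^{i\theta}f$ to make $\nu(f)$ real and non-negative without changing $\|f\|_\alpha$; alternatively, restricting to real $f$ and replacing $f$ by $-f$ shows the sup of $\nu(f)$ equals the sup of $|\nu(f)|$. So the content is entirely the two elementary bounds above, and the lemma's usefulness — which is the real reason it appears — is that it makes the Berry–Esseen constants in Theorems~\ref{thm:clt1} and~\ref{thm:clt log} independent of $\nu$.
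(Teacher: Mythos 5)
Your proof is correct and follows essentially the same route as the paper's: the constant function $\one$ gives the lower bound, and domination of the sup norm by the Hölder norm (the paper phrases this by restricting to non-negative $f$ with $f\leq\one$, you bound $|\nu(f)|\leq\|f\|_\infty\leq\|f\|_\alpha$ directly) gives the upper bound. Your remark about replacing $f$ by $e^{i\theta}f$ to reconcile $\nu(f)$ with $|\nu(f)|$ is a fair tidying of a point the paper leaves implicit.
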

\begin{proof}
    First, the constant function equal to $1$  (denoted $\mathbf{1}$) is $\alpha$-Hölder, so $\|\nu\|_\alpha\geq 1$.

    Second, since $\nu$ is non negative, we can restrict the supremum to non negative functions. Since $\|f\|_\alpha\geq \|f\|_\infty$, $\|f\|_\alpha=1$ implies $f\leq \one$ for any non negative function $f$. It follows $\nu(f)\leq 1$ for any non negative function such that $\|f\|_\alpha=1$. Hence, $\|\nu\|_\alpha\leq 1$ and the lemma is proved.
\end{proof}

\begin{proof}[Proof of Theorem~\ref{thm:clt1}]
Theorems~\ref{QCompact} and~\ref{Analytique} imply assumptions $\mathcal H[\infty]$ and $\widehat{\mathcal D}$ of \cite[Page~9]{HeHe01} hold with $Q=\Pi$, $\mathcal B=C^{\alpha}(\sta,\cc)$ and $\xi=h$. Then Theorem A and B in \cite{HeHe01} and Lemma~\ref{lem:holder norm probability} yield the theorem.
\end{proof}

Before proving Theorem~\ref{thm:clt log}, we shall need the following Lemma. 

\begin{lemma}\label{lemm:l1}
Assume there exist $\tau\in (0,2)$ and $\delta>0$ such that $\int \|v\|^{2-\tau}+\|v\|^{2+\delta}\d\mu(v)<\infty$. Then, 
$$\int\vert \log(\Vert v\Vert)\vert \Vert v\Vert^2\d\mu(v)<\infty.$$
In particular, for any probability measures $\nu$ on $\sta$
$$\mathbb E_{\mathbb P_\nu}\left[\vert\log(\Vert V_1 x_1\Vert)\vert\right]<\infty.$$
\end{lemma}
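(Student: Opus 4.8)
The plan is to split the integral $\int |\log\|v\|| \, \|v\|^2 \, \d\mu(v)$ according to whether $\|v\| \leq 1$ or $\|v\| > 1$, and bound each piece using one of the two moment hypotheses. The elementary analytic fact driving both bounds is that for every $\epsilon > 0$ there is a constant $C_\epsilon > 0$ with $|\log t| \leq C_\epsilon \, t^{-\epsilon}$ for $t \in (0,1]$ and $|\log t| \leq C_\epsilon \, t^{\epsilon}$ for $t \geq 1$; more precisely, $\sup_{t>0} |\log t|\, t^{\epsilon} = (e\epsilon)^{-1}$ on $(0,1]$ and similarly on $[1,\infty)$ after the substitution $t \mapsto 1/t$.

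First I would handle the region $\{\|v\| \leq 1\}$. There, choosing $\epsilon = \tau/2$ (which is in $(0,1)$ since $\tau \in (0,2)$), we get $|\log\|v\|| \, \|v\|^2 \leq C_\epsilon \, \|v\|^{2-\tau/2} \leq C_\epsilon \, \|v\|^{2-\tau}$ because $\|v\| \leq 1$ makes larger exponents give smaller values. Hence
\[
\int_{\{\|v\|\leq 1\}} |\log\|v\|| \, \|v\|^2 \, \d\mu(v) \leq C_{\tau/2} \int_{\mat} \|v\|^{2-\tau} \, \d\mu(v) < \infty
\]
by hypothesis. On the region $\{\|v\| > 1\}$, I choose $\epsilon = \delta/2$ and use $|\log\|v\|| \leq C_{\delta/2} \, \|v\|^{\delta/2} \leq C_{\delta/2}\,\|v\|^{\delta}$, so that $|\log\|v\||\,\|v\|^2 \leq C_{\delta/2}\,\|v\|^{2+\delta}$, and the integral over this region is bounded by $C_{\delta/2}\int \|v\|^{2+\delta}\d\mu(v) < \infty$. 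Adding the two pieces gives $\int |\log\|v\|| \, \|v\|^2 \, \d\mu(v) < \infty$.

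For the second assertion, recall that under $\mathbb P_\nu$ the law of $(\hat x_0, V_1)$ is $\|vx\|^2 \, \d\nu(\hat x)\,\d\mu(v)$, and $x_1$ is a unit representative of $V_1 \cdot \hat x_0$, so $\|V_1 x_1\| = \|V_1 x_0\|/\|x_0\| = \|V_1 x_0\|$. Therefore
\[
\mathbb E_{\mathbb P_\nu}\big[|\log\|V_1 x_1\||\big] = \int_{\sta}\int_{\mat} \big|\log\|vx\|\big| \, \|vx\|^2 \, \d\mu(v)\,\d\nu(\hat x).
\]
Since $\|vx\| \leq \|v\|$ for a unit vector $x$, on the region $\{\|vx\| \geq 1\}$ one has $|\log\|vx\||\,\|vx\|^2 \leq C_{\delta/2}\,\|v\|^{2+\delta}$ as above, while on $\{\|vx\| < 1\}$ one has $|\log\|vx\||\,\|vx\|^2 \leq C_{\tau/2}\,\|vx\|^{2-\tau} \leq C_{\tau/2}$ (the function $t\mapsto |\log t|\,t^{2-\tau}$ being bounded on $(0,1]$). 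Hence the inner integrand is dominated by $C_{\tau/2} + C_{\delta/2}\,\|v\|^{2+\delta}$, which is $\mu$-integrable and independent of $\hat x$; integrating against the probability measure $\nu$ and applying Fubini--Tonelli gives finiteness. I do not anticipate a genuine obstacle here; the only mild care needed is the uniform-in-$\hat x$ domination so that the $\nu$-integral is harmless, and keeping track that $\|vx\|$ can be $0$, in which case the integrand is set to $0$ by the usual convention and contributes nothing.
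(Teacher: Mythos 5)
Your treatment of the first integral and your overall route (the elementary bounds $|\log t|\le C_\epsilon t^{-\epsilon}$ on $(0,1]$ and $|\log t|\le C_\epsilon t^{\epsilon}$ on $[1,\infty)$, applied separately on the two regions) is exactly the paper's argument, and the first assertion is proved correctly. The identity you use for the second assertion, $\ee_{\pp_\nu}\big[|\log\|V_1x_1\||\big]=\int|\log\|vx\||\,\|vx\|^2\,\d\mu(v)\,\d\nu(\hat x)$, is also the one the paper works with (your intermediate equality ``$\|V_1x_1\|=\|V_1x_0\|/\|x_0\|$'' is not literally correct if $x_1$ denotes a unit representative of $\hat x_1$, but the integral you write down is the intended quantity).

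The genuine gap is in your final domination. You bound the integrand on $\{\|vx\|<1\}$ by the constant $C_{\tau/2}$ and then claim that $C_{\tau/2}+C_{\delta/2}\|v\|^{2+\delta}$ is $\mu$-integrable. That fails in general: $\mu$ is \emph{not} assumed to be a finite measure (the paper only assumes finiteness of the moments $\int\|v\|^{2-\tau}\d\mu$, $\int\|v\|^{2}\d\mu$ and $\int\|v\|^{2+\delta}\d\mu$, and explicitly notes that $\mu$ need not be finite), and the region $\{\|vx\|<1\}$ contains $\{\|v\|<1\}$, which may carry infinite $\mu$-mass; hence $\int_{\{\|vx\|<1\}}C_{\tau/2}\,\d\mu(v)$ can be $+\infty$. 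The ``uniform-in-$\hat x$'' care you mention addresses the harmless $\nu$-direction but not the real issue, which lies in the $\mu$-direction. The fix is immediate and is precisely what the paper does: do not discard the factor $\|vx\|^{2-\tau}$; instead use $\|vx\|\le\|v\|$ together with $2-\tau>0$ to get $|\log\|vx\||\,\|vx\|^2\le C\big(\|v\|^{2-\tau}+\|v\|^{2+\delta}\big)$, a dominating function independent of $\hat x$ and $\mu$-integrable by hypothesis, after which integrating against the probability measure $\nu$ is indeed harmless.
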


\begin{proof}
The first part follows from a direct function analysis proving there exists $C>0$ such that for all $x\geq 1,$ $\log x\leq Cx^\delta$ and for all $0<x\leq 1,$ $-\log x\leq Cx^{-\tau}$. In particular, it implies that $|\log(\Vert v\Vert)| \Vert v\Vert^2\leq C\left( \Vert v\Vert^{2-\tau}\mathbf 1_{0<\Vert v\Vert\leq 1}+\Vert v\Vert^{2+\delta}\mathbf 1_{\Vert v\Vert\geq 1}\right)$ and finally
$$\int\vert \log(\Vert v\Vert)\vert \Vert v\Vert^2\d\mu(v)\leq C\int \|v\|^{2-\tau}+\|v\|^{2+\delta}\d\mu(v).$$
The second part follows by the same argument since $$\mathbb E_{\mathbb P_\nu}\Big[\vert\log(\Vert V_1x_1\Vert)\vert\Big]=\int\vert\log(\Vert vx\Vert)\vert\Vert vx\Vert^2\d\nu(\hat x)\d\mu(v)$$
and 
\begin{eqnarray*}
  \vert\log(\Vert vx\Vert)\vert \Vert vx\Vert^2&\leq& C\left(\Vert vx\Vert^{2-\tau}\mathbf 1_{0<\Vert vx\Vert\leq 1}+\Vert vx\Vert^{2+\delta}\mathbf 1_{\Vert vx\Vert\geq 1}\right)\\&\leq& C\left(\Vert v\Vert^{2-\tau}+\Vert v\Vert^{2+\delta}\right)
\end{eqnarray*}
since $\Vert vx\Vert\leq \Vert v\Vert$ and $2-\tau>0$.
\end{proof}

\begin{proof}[Proof of Theorem~\ref{thm:clt log}]
 Let us start with Item (1). In \cite[Proposition~4.3]{BFPP17}, a similar result is proved but the assumption $E=\cc^k$ is required and the expression of $\gamma$ was not provided. Here, we shall use the perturbation theory to prove the result in full generality. We borrow the expression
$$\lim_{n\to\infty} \tfrac1n \log\ee_\nu(\exp(s\log\|W_nx_0\|))=\Upsilon(s)=\log(\rho_\alpha(\Gamma_s)),$$
from Theorem~\ref{thm:ldp log} with $\alpha=1-\tau/2$ and we aim to apply Theorem II.6.3 of \cite{ellis2006entropy}. The above convergence is also a result of the perturbation theory and is expressed for example in \cite[Lemma VIII.5]{HeHe01}. The differentiability of $\Upsilon$, required in \cite[Theorem II.6.3]{ellis2006entropy}, follows from the fact that the function $\Upsilon$ is actually analytic in a complex neighborhood of $0$. Indeed the analyticity of $z\rightarrow\rho_\alpha(\Gamma_z)$ comes from usual perturbation theory (one can consult \cite[Chapter 7, Section 1.3]{kato2013perturbation} or \cite[Lemma VIII.1]{HeHe01}). This way, since $\rho_\alpha(\Gamma_0)=1$, there exists a sufficiently small neighborhood $U$ of $0$ such that for all $z\in U, \rho_\alpha(\Gamma_z)\in\mathbb C\setminus {\mathbb R_-}$ and $z\rightarrow\log(\rho_\alpha(\Gamma_z))$ is analytic. Then Theorem II.6.3 of \cite{ellis2006entropy} implies that  there exists a constant $\gamma$, such that almost surely
$$\lim_{n\to\infty}\frac 1n \log(\|W_nx_0\|)=\gamma.$$
Now we have proved the existence of $\gamma$, let us give the expression. To this end, we shall use that $$\lim_{n\to\infty} \frac1n\log\|W_nx\|=\lim_{n\to\infty}\frac1n \sum_{k=1}^n\log\|V_k\hat x_{k}\|.$$
Note that the Markov chain $(V_n,\hat x_n)$ is $\mathbb P_{\nu_{inv}}$ invariant. Indeed we have, for all bounded and continuous functions $f$ on $\textrm{M}_k(\cc)\times\sta$:
\begin{eqnarray*}
&&\mathbb E_{\mathbb P_{\nu_{inv}}}\left( f(V_n,\hat x_n)     \right)\\&=&\int_{\mathrm M_k(\mathbb C)^n\times\sta}f(v_n,v_n v_{n-1}\ldots v_1\cdot \hat x)\Vert v_n\ldots v_1x\Vert^2 \d\mu^{\otimes n}(v_1,\ldots,v_n)\d\nu_{inv}(\hat x)\\
&=&\int_{\mathrm M_k(\mathbb C)^n}\int_{\sta}f(v_n,v_n\cdot (v_{n-1}\ldots v_1\cdot \hat x))\Vert v_n\ldots v_1x\Vert^2\d\nu_{inv}(\hat x)\d\mu^{\otimes n}(v_1,\ldots,v_n)\\
&=&\int_{\mathrm M_k(\mathbb C)^n}\int_{\sta}f(v_n,v_n\cdot (v_{n-1}\ldots v_1\cdot \hat x))\times\\&&\hphantom{cccccccccccccccccc}\times\left\Vert v_n\frac{v_{n-1}\ldots v_1 x}{\Vert v_{n-1}\ldots v_1 x\Vert}\right\Vert^2\Vert v_{n-1}\ldots v_1 x\Vert^2\d\nu_{inv}(\hat x)\d\mu^{\otimes n}(v_1,\ldots,v_n)\\
&=&\int_{\mathrm M_k(\mathbb C)^n}\int_{\sta}f(v_n,v_n\cdot (v_{n-1}\ldots v_1\cdot \hat x))\times\\&&\hphantom{cccccccccccccccccc}\times\Vert v_n(v_{n-1}\ldots v_1\cdot \hat x)\Vert^2\Vert v_{n-1}\ldots v_1 x\Vert^2\d\nu_{inv}(\hat x)\d\mu^{\otimes n}(v_1,\ldots,v_n)\\&=&
\int_{\mathrm M_k(\mathbb C)}\mathbb E_{\nu_{inv}}\left[f(v_n,v_n\cdot \hat x_{n-1})\Vert v_n x_{n-1}\Vert^2\right]\d\mu(v_n)\\
&=& \int_{\mathrm M_k(\mathbb C)}\mathbb E_{\nu_{inv}}\left[f(v_n,v_n\cdot \hat x_0)\Vert v_n x_0\Vert^2\right]\d\mu(v_n)\\&=&
\int_ {\mathrm M_k(\mathbb C)\times \sta}f(v_n,v_n\cdot \hat x)\Vert v_nx\Vert^2 \d\nu_{inv}(\hat x)\d\mu(v_n)\\
&=&\mathbb E_{\mathbb P_{\nu_{inv}}}\left( f(V_1,\hat x_1)     \right)
\end{eqnarray*}
 In line $6$, we have used the $\nu_{inv}$ invariance of $(\hat x_n)$. Since $(V_n,\hat x_n)$ is defined for $n\geq 1$, we have the desired invariance. Then Lemma~\ref{lemm:l1} allows to use the Birkhoff ergodic Theorem which implies that there exists an integrable random variable $X_\infty$ such that $\ee_{\nu_{inv}}(X_\infty)=\ee_{\nu_{inv}}(\log\|V_1 x_1\|)$ and $\mathbb P_{\nu_{inv}}$-almost surely 
$$\lim_{n\to\infty}\frac1n \sum_{k=1}^n\log\|V_n x_{n}\|=X_\infty.$$
Then, the definition of the constant $\gamma$ yields that $\mathbb P_{\nu_{inv}}$-almost surely, $X_\infty$ is a constant and $$\gamma=X_\infty=\ee_{\nu_{inv}}(X_\infty)=\mathbb E_{\mathbb P_{\nu_{inv}}}(\log\|V_1 x_1\|)=\int_{\mathrm{M}_d(\cc)\times\sta}\log\|vx\|\, \|vx\|^2\d\mu(v)\d\nu_{inv}( \hat x).$$ 

    Second, for Items~(2) and (3) for $\log\|W_nx\|$, the tilting $\Gamma_{it}$ cannot be expressed as it is in \cite{HeHe01}. However, in that reference, the only place where the expression of $\Gamma_{it}$ ($Q(t)$ in \cite{HeHe01}) matters is in Section~V, where $Q(t)(x,\d y)$ is assumed to be expressed as $Q(t)(x,\d y)=w_t(x,y)Q(x,\d y)$. That specific expression is only used in Lemma~V.9. Thankfully, the proof of this lemma adapts straightforwardly to our tilting defined by $\Gamma_{it}(\hat x,A)=\int \one_{A}(v\cdot \hat x)e^{i t\log \|vx\|}\|vx\|^2\d\mu(v)$. Hence, as in the proof of Theorem~\ref{thm:clt1}, using Theorems~\ref{QCompact} and~\ref{AnalyticLog}, we can apply Theorems A and B of \cite{HeHe01}, this time with $\xi(\hat x)=\log\|vx\|-\gamma$ and Lemma~\ref{lem:holder norm probability} yields Items~(2) and (3) for $\log\|W_nx\|$.

    Third, Item~(2) for $\log\|W_n\|$ is a consequence of Slutsky's Lemma and Item~(3) in \cite[Proposition~4.3]{BFPP17}. Indeed we have $\lim_{n\to \infty}\frac{\|W_nx\|}{\|W_n\|}=c(x)>0$ $\pp_\nu$-almost surely. Then, 
    $$\lim_{n\to\infty}\tfrac{1}{\sqrt{n}}(\log\|W_nx\|-\log\|W_n\|)=0,\quad \pp_\nu\as$$
    and Slutsky's Lemma yields the convergence in law of $\left(\frac{\log\|W_n\|-n\gamma}{\sqrt{n}}\right)$.

    Finally, Item~(3) for $\log\|W_n\|$ relies on a small modification of the proof in \cite[\S~VI.3]{HeHe01}. As usual, the proof of Berry-Esseen theorem relies on (Berry-)Esseen Lemma (\cite[Section 3.4.4]{durret2010probability} or \cite[Lemma~VI.3]{HeHe01}) bounding distance between cumulative distribution function using characteristic functions. We shall study the quantity
    $$ \underset{u \in \rr}{\sup} \; | \pp_{\nu}[\log \|W_n\| -n \gamma \leq u \sigma \sqrt{n}] - \mathcal{N}(0,1)((-\infty,u])|$$
    Let $\tau>0$. Using \cite[Lemma~VI.3]{HeHe01}, one has
    \begin{align*}
    \begin{split}
    &\underset{u \in \rr}{\sup} \; | \pp_{\nu}[\log \|W_n\| -n \gamma \leq u \sigma \sqrt{n}] - \mathcal{N}(0,1)((-\infty,u])|\\&\leq
    \frac{1}{\pi}\int_{-\sigma\tau\sqrt n}^{\sigma\tau\sqrt n}\frac{1}{\vert t\vert}\left\vert\mathbb E_\nu\left[e^{it\frac{\log(\Vert W_n\Vert)-\gamma}{\sqrt n} }\right]-e^{-\frac{t^2}{2}}\right\vert dt+ \frac{24}{\pi\sqrt 2}\frac{1}{\sigma\tau\sqrt n}
    \\&\leq\frac{1}{\pi}\int_{-\sigma\tau\sqrt n}^{\sigma\tau\sqrt n}\frac{1}{\vert t\vert}\left\vert\mathbb E_\nu\left[e^{it\frac{\log(\Vert W_n \Vert)-\gamma}{\sqrt n} }-e^{it\frac{\log(\Vert W_n x \Vert)-\gamma}{\sqrt n} }\right]\right\vert dt\\&\hphantom{\leq}+\frac{1}{\pi}\int_{-\sigma\tau\sqrt n}^{\sigma\tau\sqrt n}\frac{1}{\vert t\vert}\left\vert\mathbb E_\nu\left[e^{it\frac{\log(\Vert W_n x\Vert)-\gamma}{\sqrt n} }\right]-e^{-\frac{t^2}{2}}\right\vert dt+ \frac{24}{\pi\sqrt 2}\frac{1}{\sigma\tau\sqrt n}.
    \end{split}
\end{align*}
Using the element of proof of \cite[Lemma~VI.3]{HeHe01}, we have (for some constants $c>0$, $0<\eta<1$ and $C_3>0$)
\begin{eqnarray*}&&\frac{1}{\pi}\int_{-\sigma\tau\sqrt n}^{\sigma\tau\sqrt n}\frac{1}{\vert t\vert}\left\vert\mathbb E_\nu\left[e^{it\frac{\log(\Vert W_n x\Vert)-\gamma}{\sqrt n} }\right]-e^{-\frac{t^2}{2}}\right\vert dt+ \frac{24}{\pi\sqrt 2}\frac{1}{\sigma\tau\sqrt n}\\&\leq& \frac{c}{\pi\sigma\sqrt{n}}\left(2\sigma\tau\sqrt{n}((1-\eta)^n)+\int_{-\infty}^{+\infty}e^{\frac{-t^2}{4}}\d t\right)+\frac{C_3}{\sqrt n}+\frac{24}{\pi\sqrt 2}\frac{1}{\sigma\tau\sqrt n}.
\end{eqnarray*}
Indeed this is exactly the term that has to be handled to obtain Berry-Essen bound for $\log \|W_n x\|$. For the other term, it follows from Jensen's inequality and $|\sin(x)|\leq |x|$ that
    $$\left|\mathbb E_\nu\left[e^{it\frac{\log(\Vert W_n x\Vert)-\gamma}{\sqrt n} }-e^{it\frac{\log(\Vert W_n \Vert)-\gamma}{\sqrt n} }\right]\right|\leq \frac{|t|}{\sqrt n}\ee_\nu\left(\left|\log\frac{\|W_nx\|}{\|W_n\|}\right|\right).$$
    At this stage, since $\|W_nx\|\leq \|W_n\|$ and $x\mapsto -x^2\log x$ is bounded by $1$ on $[0,1]$, we have
    \begin{eqnarray*}\mathbb E_\nu\left(\left|\log\frac{\|W_nx\|}{\|W_n\|}\right|\right)&=&\int \left|\log\frac{\|W_n x\|}{\|W_n\|}\right|\|W_n x\|^2\d\mu(W_n) \d\nu(\hat x)\\&=&\int \left|\log\frac{\|W_n x\|}{\|W_n\|}\right|\frac{\|W_n x\|^2}{\|W_n\|^2}\|W_n\|^2\d\mu(W_n) \d\nu(\hat x)
    \\&\leq&\int \|W_n\|^2\d\mu(W_n).
    \end{eqnarray*}
  Now, it is clear that for any orthonormal basis $\{e_i,i=1,\ldots,k\}$ of $\cc^k$, $\|W_n\|^2\leq \sum_{i=1}^k\|W_n e_i\|^2$ and since $\int \|W_n y\|^2\d\mu(W_n)=1,$ for all $y$, we have
    \begin{align*}
      \mathbb E_\nu\left(\left|\log\frac{\|W_nx\|}{\|W_n\|}\right|\right)\leq k.  
    \end{align*}
    As a consequence, by gathering the terms with respect to their importance, we get
    $$\underset{u \in \rr}{\sup} \; | \pp_{\nu}[\log \|W_n\| -n \gamma \leq u \sigma \sqrt{n}] - \mathcal{N}(0,1)((-\infty,u])|\leq C_1\tau +\frac{C_2}{\tau\sqrt n}+\frac{C_3}{\sqrt n}.$$
    In particular $\tau=n^{-1/4}$ is the optimal choice and yields the desired bound.
\end{proof}

\subsection{Proofs of restricted Large Deviation Principles}\label{sec:proof ldp}

\begin{proof}[Proof of Theorem~\ref{thm:ldp empirical}]
    Theorems~\ref{QCompact} and~\ref{Analytique} imply Assumptions $\mathcal H[\infty]$ and $\widetilde{\mathcal D_{loc}}$ (Page~50) of \cite{HeHe01} hold. Then, since for $f=\one$, the measure $P_{\nu,n}$ defined in \cite[Page~53]{HeHe01} is $\nu$, \cite[Lemma~VI.5]{HeHe01} implies Item~(1) -- remark that in the proof of this lemma the assumptions $\nu(\xi)=0$ and $\sigma^2>0$ are not necessary. As a limit of convex functions, $\theta\mapsto \Lambda(\theta)$ is convex. The analyticity of $\Lambda$ is proved with the same arguments to the one used for $\Upsilon$ in the proof of Theorem~\ref{thm:clt log}.

    Then Item~(2) is a consequence of G\"artner-Ellis Theorem as formulated  in \cite[Theorem~A.5]{JOPP11} with $\mathcal I=\nn$, $M_n=\joint$, $\mathcal F_n=\mathcal J_n$ and $P_n=\pp_\nu$. That proves the theorem.
\end{proof}

The proof of the restricted LDP for $\log\|W_nx\|$ follows the same lines. The proof for $\log\|W_n\|$ relies on a lemma whose proof is adapted from \cite[Lemma~2.7]{Gui}.
\begin{lemma}\label{IneqLog}
    Let $\nu$ a probability measure on $\sta$ such that for any $y\in\cc^k\setminus\{0\}$, $\ee_\nu(|\langle y,x\rangle|)>0$. Then, for any $s>-2$, there exist $C_s(\nu)>0$ and $c_s(\nu)>0$ such that for any $v \in \mat$:
    \begin{equation} \label{GLPE}
        c_s(\nu) \|v\|^s \int_{\sta} \|vx\|^2\d\nu(\hat x)\leq  \int_{\sta} \|vx\|^{s+2} d \nu(\hat x) \leq C_s(\nu) \|v\|^s \int_{\sta} \|vx\|^2\d\nu(\hat x).
    \end{equation}
\end{lemma}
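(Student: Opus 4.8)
The plan is to reduce the estimate \eqref{GLPE} to a uniform comparison over the operator-norm unit sphere of $\mat$ by homogeneity, and then to extract the one nontrivial bound from the hypothesis via positivity of a density matrix. First I would assume $v\neq 0$ (the case $v=0$ being trivial) and write $v=\|v\|\,u$ with $\|u\|=1$, so that $\|vx\|=\|v\|\,\|ux\|$. Dividing \eqref{GLPE} through by $\|v\|^{s+2}$, it becomes equivalent to the claim that there exist $c_s(\nu),C_s(\nu)>0$, independent of $u$, with
\[
c_s(\nu)\int_\sta\|ux\|^2\,\d\nu(\hat x)\ \leq\ \int_\sta\|ux\|^{s+2}\,\d\nu(\hat x)\ \leq\ C_s(\nu)\int_\sta\|ux\|^2\,\d\nu(\hat x)
\]
for every $u\in\mat$ with $\|u\|=1$. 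Note that $\|ux\|\in[0,1]$ for such $u$ and a unit representative $x$.

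Next I would rewrite the quadratic term as $\|ux\|^2=\langle x,u^*u\,x\rangle=\tr(u^*u\,\pi_{\hat x})$, whence $\int_\sta\|ux\|^2\,\d\nu(\hat x)=\tr(u^*u\,\rho_\nu)$ with $\rho_\nu:=\int_\sta\pi_{\hat x}\,\d\nu(\hat x)$ a density matrix. The hypothesis that $\ee_\nu(|\langle y,x\rangle|)>0$ for every $y\neq 0$ is precisely the statement that $\rho_\nu$ is positive definite: $\langle y,\rho_\nu y\rangle=\int_\sta|\langle y,x\rangle|^2\,\d\nu(\hat x)$ vanishes only if $\langle y,x\rangle=0$ for $\nu$-a.e.\ $\hat x$, i.e.\ only if $\ee_\nu(|\langle y,x\rangle|)=0$. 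Letting $\lambda>0$ be the least eigenvalue of $\rho_\nu$ and using $\tr(u^*u)\geq\|u\|^2=1$ (the operator norm being the largest singular value), this gives the uniform lower bound $\int_\sta\|ux\|^2\,\d\nu(\hat x)=\tr(u^*u\,\rho_\nu)\geq\lambda$ over all $\|u\|=1$.

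The conclusion then follows from Jensen's inequality applied to the random variable $\|ux\|^2$ under $\nu$ with the map $t\mapsto t^{(s+2)/2}$. When $s\geq 0$ this map is convex, so $\int\|ux\|^{s+2}\,\d\nu\geq(\int\|ux\|^2\,\d\nu)^{(s+2)/2}\geq\lambda^{s/2}\int\|ux\|^2\,\d\nu$ (using $s/2\geq 0$ and the lower bound above), while the upper bound is immediate from $\|ux\|\leq 1$; when $-2<s<0$ the map is concave, so $\int\|ux\|^{s+2}\,\d\nu\leq(\int\|ux\|^2\,\d\nu)^{(s+2)/2}\leq\lambda^{s/2}\int\|ux\|^2\,\d\nu$ (now $s/2<0$), while the lower bound is immediate. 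Hence $c_s(\nu)=\min(1,\lambda^{s/2})$ and $C_s(\nu)=\max(1,\lambda^{s/2})$ work.

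The main---essentially the only---obstacle is the middle step: one has to recognize the pointwise nondegeneracy hypothesis on $\nu$ as the statement that the barycenter $\rho_\nu$ of the rank-one projectors $\pi_{\hat x}$ has trivial kernel, so that (by finite dimensionality) its least eigenvalue is a strictly positive constant which dominates $\int_\sta\|ux\|^2\,\d\nu$ from below uniformly in $u$. This makes the argument self-contained, requiring none of the stationarity machinery of \cite{Gui}; everything else is homogeneity and one line of Jensen, so I would expect the write-up to be short.
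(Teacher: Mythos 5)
Your proof is correct, and it takes a genuinely different route from the paper. The paper argues softly: it first shows (using the nondegeneracy hypothesis exactly as you do, via ``$\nu$-a.e.\ $x\in\ker v$ would force $\ee_\nu(|\langle y,x\rangle|)=0$ for $y\in\ker v^\perp$'') that both integrals are strictly positive for every $v\neq 0$, then notes that the ratio $R_s(v)=\int\|vx\|^{s+2}\d\nu\big/\big(\|v\|^s\int\|vx\|^2\d\nu\big)$ is continuous and homogeneous of degree $0$, and extracts $c_s(\nu)$ and $C_s(\nu)$ as the minimum and maximum of $R_s$ on the compact unit sphere of $\mat$ -- an argument adapted from \cite[Lemma~2.7]{Gui}, which gives no explicit constants. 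You instead make the hypothesis quantitative: it says the barycenter $\rho_\nu=\int_\sta\pi_{\hat x}\,\d\nu(\hat x)$ is positive definite, its least eigenvalue $\lambda>0$ gives the uniform bound $\int\|ux\|^2\d\nu=\tr(u^*u\,\rho_\nu)\geq\lambda\tr(u^*u)\geq\lambda$ on the unit sphere, and then one application of Jensen to $t\mapsto t^{(s+2)/2}$ (convex for $s\geq0$, concave for $-2<s<0$), combined with the trivial pointwise bound in the easy direction, yields $c_s(\nu)=\min(1,\lambda^{s/2})$ and $C_s(\nu)=\max(1,\lambda^{s/2})$. Your version buys explicit constants in terms of $\lambda$ and dispenses with the compactness/continuity step (the paper implicitly needs continuity of $v\mapsto\int\|vx\|^{s+2}\d\nu$, e.g.\ by dominated convergence); the paper's version is shorter to state and does not require the spectral lower bound or the case split in $s$. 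Both uses of the hypothesis are the same in substance -- no $v\neq0$ can annihilate $\nu$ -- yours just records it as $\ker\rho_\nu=\{0\}$.
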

\begin{proof}
    Assume there exists $v$ such that $\int_\sta \|vx\|^2\d\nu(\hat x)=0$ or $\int_\sta \|vx\|^{2+s}\d\nu(\hat x)=0$. Then, $\nu$-almost every $x\in \ker v$. That implies $\ee_\nu(|\langle y, x\rangle|)=0$ for any $y\in \ker v^\perp$. That contradicts our assumption on $\nu$ therefore 
    $$\int_{\sta}\|vx\|^2\d\nu(\hat x)>0\quad \mbox{and}\quad \int_{\sta}\|vx\|^{s+2}\d\nu(\hat x)>0$$
    for any $v\in \mat$. It follows the function $$R_s:v\mapsto \frac{\int_{\sta} \|vx\|^{s+2} d \nu(\hat x)}{\|v\|^s\int_{\sta} \|vx\|^2 d \nu(\hat x)}$$ is continuous away from $v=0$. Since the inequalities we aim to prove are stable by division by $\|v\|^{s+2}$ we can assume $\|v\|=1$. The unit sphere of $\mat$ is compact and $R_s$ is continuous on this sphere. Therefore $C_s(\nu)=\sup_{v:\|v\|=1}R_s(v)<\infty$. Moreover, the minimum on the unit sphere, $c_s(\nu)$, is reached at some $v_0$. Since the function $R_s$ takes non-negative values, $c_s(\nu)\geq 0$. If $c_s(\nu)=0$, it implies $\int_{\sta}\|v_0x\|^{2+s}\d\nu(\hat x)=0$ which we already proved is impossible. Hence, $c_s(\nu)>0$.
\end{proof}

\begin{proof}[Proof of Theorem~\ref{thm:ldp log}]
    Since, $\ee_\nu(e^{s\log\|W_nx\|})=\nu\Gamma_s^n\one$, the proof for $\log\|W_nx\|$ is the same as the proof of Theorem~\ref{thm:ldp empirical}. We do not reproduce it.

    For $\log\|W_n\|$, it is sufficient to prove Item~(1) since Item~(2) follows using the same arguments as in the proof of Theorem~\ref{thm:ldp empirical}. Using Lemma~\ref{IneqLog}, for any $s>-2$, there exist $C_s(\nu)>0$ and $c_s(\nu)>0$ independent of $n$ such that, 
    $$c_s(\nu)\ee_\nu(e^{s\log\|W_n\|})\leq \ee_\nu(e^{s\log\|W_n x\|})\leq C_s(\nu)\ee_\nu(e^{s\log\|W_n\|}).$$
    Then it follows that
    $$\lim_{n\to\infty}\tfrac1n\log\ee_\nu(\exp(s\log\|W_n\|))=\lim_{n\to\infty}\tfrac1n\log\ee_\nu(\exp(s\log\|W_nx\|))$$
    for any $s>-2$ such that the second limit holds. Hence, Item~(1) for $\log\|W_nx\|$ implies Item~(1) for $\log\|W_n\|$ and the theorem is proved.
\end{proof}


\appendix

\section{Some lemmas}

\begin{lemma}
    \label{lem:norm vx Lipschitz}
    Let $A\in \mat$. Then $\hat x\mapsto \|Ax\|^2$ is $(2\|A\|^2)$-Lipschitz.
\end{lemma}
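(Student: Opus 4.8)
The plan is to reduce the statement to a linear-algebra inequality via the identity $\|Ax\|^2=\tr(A^*A\,\pi_{\hat x})$, where $\pi_{\hat x}$ denotes the orthogonal projector onto $\cc x$, and then invoke the identification of the metric $d$ with the operator norm of $\pi_{\hat x}-\pi_{\hat y}$ recorded in \eqref{eq:d wedge}.

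First I would fix $A\in\mat$ and two points $\hat x,\hat y\in\sta$ with unit representatives $x,y$. Since the value $\langle x,A^*Ax\rangle=\tr(A^*A\,\pi_{\hat x})$ does not depend on the chosen representative, we may write
$$\|Ax\|^2-\|Ay\|^2=\tr\!\big(A^*A(\pi_{\hat x}-\pi_{\hat y})\big).$$
Next I would apply H\"older's inequality for Schatten norms with exponents $(\infty,1)$, which gives
$$\big|\|Ax\|^2-\|Ay\|^2\big|\le\|A^*A\|\,\|\pi_{\hat x}-\pi_{\hat y}\|_1=\|A\|^2\,\|\pi_{\hat x}-\pi_{\hat y}\|_1.$$
Finally, the self-adjoint operator $\pi_{\hat x}-\pi_{\hat y}$ has rank at most $2$ and vanishing trace, so its nonzero eigenvalues are $\pm\|\pi_{\hat x}-\pi_{\hat y}\|$; hence $\|\pi_{\hat x}-\pi_{\hat y}\|_1=2\|\pi_{\hat x}-\pi_{\hat y}\|=2\,d(\hat x,\hat y)$ by \eqref{eq:d wedge}. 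Combining the two displays yields $\big|\|Ax\|^2-\|Ay\|^2\big|\le 2\|A\|^2\,d(\hat x,\hat y)$, which is exactly the asserted Lipschitz bound.

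There is no genuine obstacle here; the only point requiring a word of justification is the equality $\|\pi_{\hat x}-\pi_{\hat y}\|_1=2\,d(\hat x,\hat y)$, and this is already part of \eqref{eq:d wedge}, so one may simply cite it. If one prefers to avoid Schatten norms altogether, an equivalent route is to diagonalize $A^*A=\sum_j\lambda_j u_ju_j^*$ in an orthonormal eigenbasis and bound $|\tr(A^*A(\pi_{\hat x}-\pi_{\hat y}))|\le\sum_j\lambda_j|\langle u_j,(\pi_{\hat x}-\pi_{\hat y})u_j\rangle|$ using $\sum_j|\langle u_j,(\pi_{\hat x}-\pi_{\hat y})u_j\rangle|\le\|\pi_{\hat x}-\pi_{\hat y}\|_1$, but the H\"older--Schatten argument above (already used in the proof of Lemma~\ref{Lemma}) is the cleanest.
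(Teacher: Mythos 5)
Your proof is correct and follows essentially the same route as the paper: the identity $\|Ax\|^2-\|Ay\|^2=\tr\bigl(A^*A(\pi_{\hat x}-\pi_{\hat y})\bigr)$, H\"older's inequality for Schatten norms, and the relation $\tfrac12\|\pi_{\hat x}-\pi_{\hat y}\|_1=d(\hat x,\hat y)$ from Eq.~\eqref{eq:d wedge}. The extra justification you give for the trace-norm/operator-norm identity is a welcome but unnecessary elaboration of what the paper simply cites.
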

\begin{proof}
    By definition of $\pi_{\hat x}$, $\|Ax\|^2=\tr(A\pi_{\hat x} A^*)$. Hence, $\|Ax\|^2-\|Ay\|^2=\tr(A^*A(\pi_{\hat x}-\pi_{\hat y}))$. H\"older's inequality for matrix Schatten norms implies
    $$|\|Ax\|^2-\|Ay\|^2|\leq \|A\|^2\|\pi_{\hat x}-\pi_{\hat y}\|_1=2\|A\|^2d(\hat x,\hat y)$$
    with $\|.\|_1$ the trace norm and the lemma is proved.
\end{proof}

\begin{lemma}\label{vxz}
    Fix $z\in \cc$ such that $\Re(z)>0$. Let $\alpha^*=\min(1,\Re(z))$ and $\beta=\max(1,\Re(z))$. Then the function $t\mapsto e^{z\log t}$ continued in $0$, is $\alpha^*$-Hölder continuous on any compact subset $K$ of $\rr_+$ with Hölder coefficient bounded from above by
    $$\frac{|z|}{\alpha^*}\sup_{t\in K} t^{\beta-1}$$
    with $t^0=1$ for any $t\in \rr_+$.
\end{lemma}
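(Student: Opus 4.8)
Fix $z\in\cc$ with $\Re(z)>0$. Let $\alpha^*=\min(1,\Re(z))$ and $\beta=\max(1,\Re(z))$. Then $t\mapsto e^{z\log t}$ (continued by $0$ at $t=0$) is $\alpha^*$-Hölder on any compact $K\subset\rr_+$ with Hölder coefficient at most $\frac{|z|}{\alpha^*}\sup_{t\in K}t^{\beta-1}$.

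\medskip

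The plan is to estimate $|t^z-s^z|$ directly, where $t^z$ is shorthand for $e^{z\log t}$, by integrating the derivative and then converting an $L^\infty$ bound on the derivative into a Hölder bound via the elementary inequality $|a^{\alpha^*}-b^{\alpha^*}|\le|a-b|^{\alpha^*}$ for $a,b\ge 0$ and $0<\alpha^*\le 1$. First I would handle the case where one of the arguments is $0$: if $s=0$ then $|t^z-0|=t^{\Re(z)}=t^{\alpha^*}t^{\Re(z)-\alpha^*}=|t-0|^{\alpha^*}t^{\beta-1}$ since $\Re(z)-\alpha^*=\beta-1$, which already gives the claimed bound (with constant $1\le |z|/\alpha^*$, using $|z|\ge\Re(z)\ge\alpha^*$). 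So from now on assume $0<s\le t$, both in $K$.

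\medskip

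For the main case, write $t^z-s^z=\int_s^t z\,u^{z-1}\,\d u$, so that $|t^z-s^z|\le |z|\int_s^t u^{\Re(z)-1}\,\d u$. The function $u\mapsto u^{\Re(z)-1}$ on $[s,t]$ is bounded by $\max(s^{\Re(z)-1},t^{\Re(z)-1})\le\sup_{u\in K}u^{\beta-1}$ when $\Re(z)\ge 1$ (increasing) and by $s^{\Re(z)-1}$ when $\Re(z)<1$ (decreasing); in the latter case I would instead compute the integral exactly, $\int_s^t u^{\Re(z)-1}\,\d u=\frac{1}{\Re(z)}(t^{\Re(z)}-s^{\Re(z)})\le\frac{1}{\Re(z)}(t-s)^{\Re(z)}=\frac{1}{\alpha^*}(t-s)^{\alpha^*}$, using $|t^{\Re(z)}-s^{\Re(z)}|\le|t-s|^{\Re(z)}$ and $\beta-1=0$ here so $\sup_K u^{\beta-1}=1$. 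In the case $\Re(z)\ge 1$, bounding the integrand by $\sup_K u^{\beta-1}$ gives $|t^z-s^z|\le |z|\,(\sup_{u\in K}u^{\beta-1})\,(t-s)$, and since $t-s\le(\mathrm{diam}\,K)^{1-\alpha^*}(t-s)^{\alpha^*}$\,—\,wait, here $\alpha^*=1$, so $(t-s)=(t-s)^{\alpha^*}$ directly, and $|z|=|z|/\alpha^*$. Either way one arrives at $|t^z-s^z|\le\frac{|z|}{\alpha^*}(\sup_{u\in K}u^{\beta-1})\,|t-s|^{\alpha^*}$, which is the assertion.

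\medskip

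The only mildly delicate point\,—\,and where I would be careful\,—\,is that the two regimes $\Re(z)\ge 1$ and $\Re(z)<1$ must be matched so that a single formula with $\alpha^*=\min(1,\Re(z))$ and $\beta=\max(1,\Re(z))$ covers both: when $\Re(z)<1$ one genuinely needs the convexity-type inequality $|t^{\Re(z)}-s^{\Re(z)}|\le|t-s|^{\Re(z)}$ (equivalently concavity of $u\mapsto u^{\Re(z)}$), whereas when $\Re(z)\ge 1$ a crude sup bound on the derivative suffices. I do not expect any real obstacle; the argument is a routine mean-value/monotonicity estimate, and the appearance of $\beta-1$ rather than $\Re(z)-1$ in the final bound is exactly the bookkeeping that reconciles the two cases (for $\Re(z)<1$, $\beta-1=0$ and $t^{\beta-1}=1$; for $\Re(z)\ge1$, $\beta-1=\Re(z)-1$ and $\alpha^*=1$).
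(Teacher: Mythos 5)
Your proof is correct and follows essentially the same route as the paper: treat $s=0$ separately via $|t^z|=t^{\Re(z)}$, then for $0<s\le t$ apply the fundamental theorem of calculus to get $|t^z-s^z|\le|z|\int_s^t u^{\Re(z)-1}\,\d u$, and split the cases $\Re(z)\le 1$ (using the $\Re(z)$-Hölder continuity of $u\mapsto u^{\Re(z)}$ with coefficient $1$) and $\Re(z)\ge 1$ (bounding the integrand by $\sup_{u\in K}u^{\beta-1}$). The mid-proof self-correction about $\alpha^*=1$ is only stylistic and does not affect the argument.
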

\begin{proof}
    Let $t,s\in K$ be such that $t>s\geq  0$. For $s=0$, since $|e^{z\log t}|=t^{\Re(z)}$, if $\Re(z)\leq 1$, the lemma follows from $|z|/\Re(z)\geq 1$. If $\Re(z)>1$, $t^{\Re(z)}\leq  t\  \sup_{u\in K} u^{\Re(z)-1}$ and the lemma follows from $|z|>1$. 

We now assume $t>s>0$. Since $u\mapsto ze^{(z-1)\log u}$ is the derivative of $u\mapsto e^{z\log u}$, the fundamental theorem of calculus and triangular inequality imply 
$$|e^{z\log t}-e^{z\log s}|\leq |z|\int_s^t u^{\Re(z)-1} d u=\frac{|z|}{\Re(z)}(t^{\Re(z)}-s^{\Re(z)}).$$
For $\Re(z)\leq 1$ the lemma follows then from the $\Re(z)$-Hölder continuity of $x\mapsto x^{\Re(z)}$ with Hölder coefficient $1$. For $\Re(z)>1$, taking the supremum over $u\in K$ in the integral proves the lemma.
\end{proof}

\color{black}
\section{Cycles for quantum channels with unique invariant state}\label{app:cycl}
We investigate the consequence of our definition of period for $\mu$ on the eigenvectors of the map $\Phi:X\mapsto \int_{\mat} v^* Xv\d\mu(v)$. Recall the definition of cycles and period given in Section~\ref{sec:peripheral spec}. 
\begin{definition}\label{def:period app}
    A $\ell$-cycle of $\mu$ is a set of orthogonal subspaces $\{E_1,\dotsc,E_\ell\}$ of $E$, such that, 
    $$E=E_1\oplus\dotsb \oplus E_\ell$$
    and for $\mu$-almost every $v$ and $i\in \{1,\dotsc,\ell\}$, $vE_i\subset E_{i+1}$ with $E_{\ell+1}=E_1$.

    The period of $\mu$ denoted $m$ is the maximal $\ell$ such that there exists a $\ell$-cycle of $\mu$.
\end{definition}

\begin{proposition}\label{prop:cycle operators}
    Assume {\bf (Erg)} holds. Let $m$ be the period of $\mu$ as defined in Definition~\ref{def:period app}. Then there exist positive semi-definite matrices $\{M_r,r=1,\ldots,m\}$ such that
    $$\Phi(M_r)=M_{r-1}\quad \text{with } M_{0}=M_m$$
    and a m-cycle $\{E_1,\dotsc,E_m\}$ of $E$ such that
    $$M_r P_{E_r}=P_{E_r} M_r=P_{E_r}\quad\mbox{and}\quad M_rP_{E_{r'}}=P_{E_{r'}}M_r=0\quad \text{if }r\neq r'$$
    with $P_{E_r}$ the orthogonal projector onto $E_r$.

    Moreover their normalization can be chosen such that $\sum_{r=1}^m M_r=\id$.
\end{proposition}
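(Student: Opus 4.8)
The plan is to realise each $M_r$ as a limit of iterates $\Phi^{mn}(P_{E_r})$ and to extract every asserted identity from just two facts: the cyclic inclusions $vE_s\subseteq E_{s+1}$ and the stochasticity relation \eqref{eq:stochastic family}, which says exactly $\Phi(\id)=\int v^*v\,\d\mu=\id$. Since $E$ itself is a $1$-cycle and an $\ell$-cycle forces $\ell\leq\dim E$, the period $m$ is finite and an $m$-cycle $\{E_1,\dots,E_m\}$ exists by Definition~\ref{def:period app}; fix one, with cyclic convention $E_0=E_m$, and write $E^\perp$ for the orthogonal complement of $E$ in $\cc^k$ and $P_{E^\perp}$ for its projector. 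The key elementary observation is this: for $\mu^{\otimes mn}$-almost every $(v_1,\dots,v_{mn})$ the product $W=v_1\cdots v_{mn}$ maps $E_s$ into $E_{s+mn}=E_s$, so $WP_{E_s}$ has range in $E_s$; since $\Phi^{mn}(X)=\int W^*XW\,\d\mu^{\otimes mn}$, this yields for every $n\geq0$ and all $r,s\in\{1,\dots,m\}$
\[\Phi^{mn}(P_{E_r})\,P_{E_s}=\delta_{rs}\,P_{E_s},\]
because $P_{E_r}WP_{E_s}=0$ when $s\neq r$, whereas for $s=r$ one has $W^*P_{E_r}WP_{E_r}=W^*WP_{E_r}$ and $\int W^*W\,\d\mu^{\otimes mn}=\Phi^{mn}(\id)=\id$. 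Summing over $s$ gives $\Phi^{mn}(P_{E_r})P_E=P_{E_r}$, so by self-adjointness each $\Phi^{mn}(P_{E_r})$ is block-diagonal for $\cc^k=E\oplus E^\perp$: $\Phi^{mn}(P_{E_r})=P_{E_r}\oplus D_{r,n}$ with $D_{r,n}\geq0$, and $\sum_rD_{r,n}\leq\id_{E^\perp}$ since $\sum_r\Phi^{mn}(P_{E_r})=\Phi^{mn}(P_E)\leq\Phi^{mn}(\id)=\id$.

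The second step, which I expect to be the decisive one, is transience of $E^\perp$: $\Phi^n(P_{E^\perp})\to0$, and this is where {\bf (Erg)} is genuinely used. First $\Phi(P_{E^\perp})\leq P_{E^\perp}$: for $x\in E$ one has $vx\in E$ a.e., so $\langle x,\Phi(P_{E^\perp})x\rangle=\int\|P_{E^\perp}vx\|^2\,\d\mu=0$, whence (positivity) $\Phi(P_{E^\perp})$ is supported on $E^\perp$, where $\langle x,\Phi(P_{E^\perp})x\rangle\leq\int\|vx\|^2\,\d\mu=\|x\|^2$ gives $\Phi(P_{E^\perp})\leq\id_{E^\perp}$. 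Hence $(\Phi^n(P_{E^\perp}))_n$ is nonincreasing, converging to a $\Phi$-fixed $P_\infty$ with $0\leq P_\infty\leq P_{E^\perp}$. If $P_\infty\neq0$, put $c=\|P_\infty\|$ and $F=\ker(c\,\id-P_\infty)\neq\{0\}$; for $x\in F$,
\[c\|x\|^2=\langle x,\Phi(P_\infty)x\rangle=\int\langle vx,P_\infty vx\rangle\,\d\mu\leq c\int\|vx\|^2\,\d\mu=c\|x\|^2,\]
forcing $\langle vx,P_\infty vx\rangle=c\|vx\|^2$, i.e.\ $vx\in F$, for $\mu$-a.e.\ $v$. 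Then $F\subseteq E^\perp$ would be a nonzero invariant subspace, hence would contain the (unique, by {\bf (Erg)}) minimal invariant subspace $E$, contradicting $E\cap E^\perp=\{0\}$. So $P_\infty=0$.

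Now define $M_r$. Applying $\Phi^m$ to $\Phi^{mn}(P_{E_r})=P_{E_r}\oplus D_{r,n}$ and using that $\Phi^m$ maps positive operators dominated by $P_{E^\perp}$ to positive operators dominated by $\Phi^m(P_{E^\perp})\leq P_{E^\perp}$ (hence still supported on $E^\perp$), one obtains the recursion $D_{r,n+1}=D_{r,1}+\Phi^m(D_{r,n})|_{E^\perp}$ (extending $D_{r,n}$ by zero to $\cc^k$), from which $(D_{r,n})_n$ is nondecreasing by induction and, being bounded, converges. Set $M_r=\lim_n\Phi^{mn}(P_{E_r})=P_{E_r}\oplus D_r$. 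Then $M_r\geq0$ as a limit of positive operators, $\Phi^m(M_r)=M_r$, and letting $n\to\infty$ in the identity of the first paragraph gives $M_rP_{E_s}=\delta_{rs}P_{E_s}$; taking adjoints, this is exactly $M_rP_{E_r}=P_{E_r}M_r=P_{E_r}$ and $M_rP_{E_{r'}}=P_{E_{r'}}M_r=0$ for $r\neq r'$. Finally $\sum_{r}M_r=\lim_n\Phi^{mn}(P_E)=\id-\lim_n\Phi^{mn}(P_{E^\perp})=\id$ by transience, which is the claimed normalization.

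It remains to check $\Phi(M_r)=M_{r-1}$ (indices mod $m$, $E_0=E_m$). Since $vP_{E_s}$ has range in $E_{s+1}$ a.e.\ and $M_rP_{E_{s+1}}=\delta_{r,s+1}P_{E_{s+1}}$ by the previous paragraph,
\[\Phi(M_r)P_{E_s}=\int v^*M_r vP_{E_s}\,\d\mu=\delta_{r,s+1}\int v^*vP_{E_s}\,\d\mu=\delta_{r-1,s}\,P_{E_s},\]
so $\Phi(M_r)$ and $M_{r-1}$ agree on $E$; both being positive and block-diagonal, their difference is a self-adjoint operator supported on $E^\perp$ and fixed by $\Phi^m$, hence $0$, because $\Phi^{mn}$ sends any self-adjoint $R$ supported on $E^\perp$ to $0$ (squeeze $\Phi^{mn}(R)$ between $\pm\|R\|\,\Phi^{mn}(P_{E^\perp})\to0$). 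This closes the proof. One could alternatively dualize: the trace-preserving channel $X\mapsto\int vXv^*\,\d\mu$ restricts to an irreducible map on the operators supported on $E$, whose Perron--Frobenius structure provides a faithful invariant state $\tfrac1m\sum_r\rho_r$ with $\rho_r$ supported on $E_r$ and cyclically permuted, and $M_r=\lim_n\Phi^{mn}(P_{E_r})$ is precisely the dual of $\rho_r$; I would favour the direct construction above. In either route the real obstacle is controlling the transient block $E^\perp$ — establishing $\Phi^n(P_{E^\perp})\to0$ from {\bf (Erg)}, which underpins both $\sum_rM_r=\id$ and the vanishing of the $E^\perp$-component of $\Phi(M_r)-M_{r-1}$.
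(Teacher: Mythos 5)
Your construction is correct, and it takes a genuinely different route from the paper. The paper proves the proposition by passing to the restriction of the dual channel to $\mathcal L(E)$, invoking Perron--Frobenius theory for irreducible positive maps \cite{EHK785} and the structure theorem \cite[Theorem~6.16]{Wo12} to obtain cyclically permuted invariant densities $\rho_r$ with $\range\rho_r=E_r$, and then \emph{defines} the $M_r$ through the peripheral projection $\lim_n{\Phi^*}^{mn}(\rho)=\sum_r\rho_r\tr(M_r\rho)$ (Equation~\eqref{eq:conv Phim}), reading off positivity, $\sum_rM_r=\id$, the block relations with $P_{E_r}$, and $\Phi(M_r)=M_{r-1}$ from that decomposition together with the transience of $E^\perp$. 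You instead fix an $m$-cycle (which exists by Definition~\ref{def:period app}) and build $M_r=\lim_n\Phi^{mn}(P_{E_r})$ directly, using only the cyclic inclusions, stochasticity $\Phi(\id)=\id$, positivity, and a hands-on proof that $\Phi^n(P_{E^\perp})\to0$ via monotonicity plus a top-eigenspace invariance argument under {\bf (Erg)}; all the asserted identities then follow by passing to the limit in $\Phi^{mn}(P_{E_r})P_{E_s}=\delta_{rs}P_{E_s}$ and by the squeeze $\pm\Phi^{mn}(R)\leq\|R\|\,\Phi^{mn}(P_{E^\perp})$ for the $E^\perp$-block of $\Phi(M_r)-M_{r-1}$. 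What your route buys is self-containedness (no external spectral-structure theorems) and a clear isolation of the two driving facts; what the paper's route buys is that it simultaneously produces the $\rho_r$ and the convergence \eqref{eq:conv Phim}, which are reused later in Lemma~\ref{lem:convergence Pim} (your $M_r$ do coincide with the paper's, by duality $\tr(\rho\,\Phi^{mn}(P_{E_r}))=\tr({\Phi^*}^{mn}(\rho)P_{E_r})\to\tr(M_r\rho)$, though the proposition itself does not require this identification). Two small steps you leave implicit are immediate and worth a line: $F\subseteq E^\perp$ because $0\leq P_\infty\leq P_{E^\perp}$ forces $\range P_\infty\subseteq E^\perp$ and eigenvectors of $P_\infty$ for the nonzero eigenvalue $c$ lie in $\range P_\infty$; and ``a nonzero $\mu$-invariant subspace contains $E$'' uses that in finite dimension any nonzero invariant subspace contains a minimal one, which is $E$ by the uniqueness in {\bf (Erg)}.
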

\begin{proof}
    Let $\mathcal L(E)$ be the space of bounded endomorphisms of $E$. By assumption, the map $\Phi_E:\mathcal L(E)\to\mathcal L(E)$ defined by $\Phi_E^*(X)=\int_{\mat} v X_E v^*\d\mu(v)$, with $X_E$ the natural embedding of $X$ in $\mat$,is an irreducible trace preserving completely positive map. From Perron-Frobenius theory -- see \cite{EHK785} -- its spectral radius is $1$ and its peripheral spectrum $\{\lambda \in \operatorname{spec}\Phi_E:|\lambda|=1\}$ is simple and equal to $\{e^{i\frac{k}{m}2\pi},k=1,\ldots,m\}$ where $m$ is the period of $\mu$ as defined in Definition~\ref{def:period app}.

    By definition of $E$ in Assumption {\bf (Erg)}, for any density matrix $\rho\in \mat$, 
    $$\lim_{n\to\infty}P_{E^\perp}{\Phi^*}^n(\rho)P_{E^\perp}=0,$$
    with $P_{E^\perp}$ the orthogonal projector onto $E^\perp$. Indeed by invariance of $E$, we have $\Phi^*(P_E\rho P_E)=P_E \rho P_E,$ for all $\rho$. This way $P_{E^\perp}{\Phi^*}^n(\rho)P_{E^\perp}=P_{E^\perp}{\Phi^*}^n(P_{E^\perp}\rho P_{E^\perp})P_{E^\perp}$, for all $n$ and all $\rho$ which justifies the fact that the previous limit is equal to $0$. Indeed if it was not true since $P_{E^\perp}{\Phi^*}(P_{E^\perp}\rho P_{E^\perp})P_{E^\perp}$ is completely positive, there would exist a $\mu$-invariant subspace $E'\subset E^\perp$ and $E$ would not be unique. It follows that any eigenvector of $\Phi^*$ with eigenvalue of modulus $1$ is an element of $P_E\mat P_E$ with $P_E$ the orthogonal projector onto the space $E$. Hence, the peripheral spectrum of $\Phi^*$ and therefore $\Phi$ is also $\{e^{i\frac{k}{m}2\pi},k=1,\ldots,m\}$ with simple eigenvalues.

    From \cite[Theorem~6.16]{Wo12}, there exists $\{\rho_r,r=1,\ldots,m\}\subset P_E\mat P_E$ a set of density matrices such that $\operatorname{range}\rho_r=E_r$ where $E_r$ is a cyclic class of Definition~\ref{def:period app} and $\Phi^*(\rho_r)=\rho_{r+1}$ with $\rho_{m+1}=\rho_1$. It follows that the eigenspace of ${\Phi^*}^m$ associated to the eigenvalue $1$ is spanned by the orthogonal basis $\{\rho_r,r=1,\ldots,m\}$. Moreover, $1$ is the only eigenvalue of ${\Phi^*}^m$ with modulus $1$. Hence, there exist matrices $\{M_r,r=1,\ldots,m\}$ such that
    \begin{align}\label{eq:conv Phim}
    \Phi^*_{|1|}(\rho):=\lim_{n\to\infty} {\Phi^*}^{mn}(\rho)=\sum_{r=1}^m \rho_r \tr(M_r\rho),
    \end{align}
    for any density matrix $\rho$. By positivity of $\Phi^*$, each $M_r$ is positive semi-definite. Moreover, since $\Phi^*$ is trace preserving, $\sum_{r=1}^m \tr(M_r\rho)=1$ for any density matrix $\rho\in\mat$. Hence, $\sum_{r=1}^m M_r=\id$.

    Using that $v^mE_r\subset E_r$ for $\mu$-almost all $v$, on the one hand, $\tr(M_r\rho)=1$ for any density matrix $\rho\in \mat$ such that $\operatorname{range} \rho\subset E_r$. On the other hand, $\tr(M_r\rho)=0$ for any density matrix $\rho\in\mat$ such that $\operatorname{range} \rho\subset E_{r'}$ for some $r'\neq r$. Hence $P_{E_r}M_rP_{E_r}=P_{E_r}$ and $P_{E_{r'}}M_rP_{E_{r'}}=0$ for $r'\neq r$. Then, since $M_r$ is semi-definite and $\sum_{r=1}^m M_r=\id$, $P_{E_r}M_r=M_rP_{E_r}=P_{E_r}$ and $P_{E_{r'}}M_r=M_rP_{E_{r'}}=0$ for $r'\neq r$.

    It remains to prove $\Phi(M_r)=M_{r-1}$. From Equation~\eqref{eq:conv Phim} and $\Phi^*(\rho_r)=\rho_{r+1}$ with $\rho_{m+1}=\rho_1$, we deduce
    $$\Phi(X)=\sum_{r=1}^m M_r\tr(\rho_{r+1} X) + R(X)$$
    with $\lim_{n\to\infty} \|R^n\|^{\frac1n}<1$. It follows that $\Phi(M_r)=M_{r-1}$ with $M_0=M_m$ and the proposition is proved.
\end{proof}
We prove an auxiliary result on the $m$ time steps Markov chain defined by $\Pi^m$. The proof relies on the results of \cite{BFPP17}. This result can be seen as an addendum to \cite{BFPP17}.
\begin{lemma}
    \label{lem:convergence Pim}
    Assume {\bf (Pur)} and {\bf (Erg)} hold. Then there exist a unique set of probability measures $\{\nu_r,r=1,\ldots,m\}$ such that $\nu_r(\mathrm P(E_r))=1$ and $\nu_r\Pi^m=\nu_r$. Moreover, for any function $f\in C^0(\sta,\cc)$,
    $$\lim_{n\to\infty}\Pi^{mn}f(\hat x)=\sum_{r=1}^m \langle x, M_r x\rangle \nu_r(f),$$
    where the matrices $M_r$ are the ones defined in Proposition~\ref{prop:cycle operators}.
\end{lemma}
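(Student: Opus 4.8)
The plan is to run the ergodic results of \cite{BFPP17} on each cyclic class $\P(E_r)$ separately and then control how the mass of the chain escapes into $\P(E)=\bigcup_{r=1}^m\P(E_r)$. For the reduction, first note that $\Pi^m$ is itself the transition kernel of the quantum trajectory driven by $\tilde\mu$, the pushforward of $\mu^{\otimes m}$ by $(v_1,\dots,v_m)\mapsto v_m\cdots v_1$: it has finite second moment and, by \eqref{eq:stochastic family}, satisfies the stochasticity condition $\int W_m^*W_m\,\d\mu^{\otimes m}=\Phi^m(\id)=\id$. Since $vE_r\subset E_{r+1}$ for $\mu$-almost every $v$, one has $W_mE_r\subset E_r$ for $\mu^{\otimes m}$-almost every path, hence $\P(E_r)$ is absorbing for $(\hat x_{mn})_n$ and the induced chain on $\P(E_r)$ is the quantum trajectory driven by $\tilde\mu_r$, the law of $W_m|_{E_r}\in\mathrm M_{d_r}(\cc)$ with $d_r=\dim E_r$; using $W_mP_{E_r}=P_{E_r}W_mP_{E_r}$ one checks $\int(W_m|_{E_r})^*(W_m|_{E_r})\,\d\mu^{\otimes m}=P_{E_r}\Phi^m(\id)P_{E_r}=\id_{E_r}$, so $\tilde\mu_r$ obeys the stochasticity condition on $E_r$. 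I would then verify that $\tilde\mu_r$ satisfies \textbf{(Pur)} and \textbf{(Erg)}: an orthogonal projector onto a subspace of $E_r$ is also an orthogonal projector of $\cc^k$, and because $W_{mn}E_r\subset E_r$ a purification relation for $\tilde\mu_r$ is one for $\mu$, so \textbf{(Pur)} is inherited; the channel attached to $\tilde\mu_r$ is the restriction of $\Phi^m$ to the corner $P_{E_r}\mat P_{E_r}$, which is irreducible (block-primitivity of the powers of an irreducible channel, as extracted in the proof of Proposition~\ref{prop:cycle operators} from \cite{EHK785,Wo12}), so its unique minimal invariant subspace is $E_r$, i.e.\ \textbf{(Erg)} holds. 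The uniqueness theorem of \cite{BFPP17} applied to $(\Pi^m,\tilde\mu_r)$ then yields a unique probability measure $\nu_r$ on $\P(E_r)$ with $\nu_r\Pi^m=\nu_r$ (this is the existence and uniqueness of $\{\nu_r\}$ asserted in the lemma), and \cite[Theorem~1.1]{BFPP17} gives, for $\hat x\in\P(E_r)$ and $f\in C^0(\sta,\cc)$, the Cesàro convergence $\tfrac1p\sum_{j=0}^{p-1}\Pi^{m(j+pn)}f(\hat x)\to\nu_r(f)$, where $p$ is the period of $\tilde\mu_r$.

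\emph{Existence and identification of the limit on $\P(E)$.} Since $\Pi$ is quasi-compact (Theorem~\ref{thm:pur implique quasi-compact}) with semisimple (Lemma~\ref{lem:semi-simple}) peripheral spectrum equal to the $m$-th roots of unity (Lemma~\ref{lem:finite subgroup}), $\Pi^m$ is quasi-compact with peripheral spectrum $\{1\}$, so $\Pi^{mn}$ converges in operator norm on $C^\alpha(\sta,\cc)$ to the Riesz projector onto $\mathcal F$; by density of $C^\alpha$ in $C^0$ and $\sup_n\|\Pi^{mn}\|_{C^0\to C^0}\le1$, the limit $g_f:=\lim_n\Pi^{mn}f$ exists in $C^0(\sta,\cc)$ for every $f\in C^0$, is continuous, and satisfies $\Pi^mg_f=g_f$. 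Since $\Pi^{mn}f(\hat x)$ now converges, so do all its Cesàro averages, to the same value; comparing with the previous step gives $g_f(\hat x)=\nu_r(f)$ for $\hat x\in\P(E_r)$. Setting $\psi_f(\hat x):=\sum_{r=1}^m\langle x,M_rx\rangle\,\nu_r(f)$, one has $\Pi^m\psi_f=\psi_f$ because $\Phi^m(M_r)=M_r$ (Proposition~\ref{prop:cycle operators} gives $\Phi(M_r)=M_{r-1}$), and $\psi_f=\nu_r(f)$ on $\P(E_r)$ because $\langle x,M_{r'}x\rangle=\tr(M_{r'}\pi_{\hat x})=\delta_{r,r'}$ there. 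Hence $h:=g_f-\psi_f$ is continuous, fixed by $\Pi^m$, and vanishes on $\P(E)$.

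\emph{Absorption into $\P(E)$.} It remains to show $h\equiv0$. Starting the chain at any $\hat x$, $\int d(W_n\cdot\hat x,\P(E))^2\,\|W_nx\|^2\,\d\mu^{\otimes n}=\int\|P_{E^\perp}W_nx\|^2\,\d\mu^{\otimes n}=\tr\!\big(P_{E^\perp}\,{\Phi^*}^{n}(\pi_{\hat x})\,P_{E^\perp}\big)$, where ${\Phi^*}^{n}(\rho)=\int W_n\rho W_n^*\,\d\mu^{\otimes n}$ and I used $d(\hat z,\P(E))=\|P_{E^\perp}z\|$; by the convergence $P_{E^\perp}{\Phi^*}^{n}(\rho)P_{E^\perp}\to0$ established inside the proof of Proposition~\ref{prop:cycle operators}, this tends to $0$ as $n\to\infty$. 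As $h$ is uniformly continuous on the compact space $\sta$ and vanishes on $\P(E)$, it follows that $\int|h(W_{mn}\cdot\hat x)|\,\|W_{mn}x\|^2\,\d\mu^{\otimes mn}\to0$; but $\Pi^mh=h$ gives $h(\hat x)=\Pi^{mn}h(\hat x)=\int h(W_{mn}\cdot\hat x)\|W_{mn}x\|^2\,\d\mu^{\otimes mn}$ for every $n$, whence $h(\hat x)=0$. Therefore $g_f=\psi_f$, i.e.\ $\lim_n\Pi^{mn}f(\hat x)=\sum_{r=1}^m\langle x,M_rx\rangle\,\nu_r(f)$ for all $\hat x$ and all $f\in C^0(\sta,\cc)$, which is the assertion.

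I expect the main obstacle to be the first step, specifically verifying \textbf{(Erg)} for the restricted measures $\tilde\mu_r$: it amounts to the irreducibility of the diagonal block $\Phi^m|_{P_{E_r}\mat P_{E_r}}$ of an irreducible channel, which is classical Perron--Frobenius material but must be imported through Proposition~\ref{prop:cycle operators} rather than through Lemma~\ref{lem:charactrization of cF}, so as not to create a circular dependency (this lemma is itself used in the proof of Lemma~\ref{lem:charactrization of cF}). A secondary point requiring care is that $\Pi^m$ does \emph{not} satisfy \textbf{(Erg)} globally on $\sta$, which is exactly why the class-by-class reduction plus the absorption argument is needed rather than a direct application of \cite{BFPP17} to $\Pi^m$.
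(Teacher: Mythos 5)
Your proposal is correct in outline, but it takes a genuinely different route from the paper. The paper's proof stays entirely probabilistic and self-contained in the appendix: it invokes \cite[Proposition~3.5]{BFPP17}, which under {\bf (Pur)} supplies $\outalg$-measurable estimators $\hat y_{mn}$ with $\ee_\nu\big(d(\hat x_{m(n+l)},\hat y_{mn}\circ\theta^{ml})\big)\le C\lambda^n$, combines this with the shift identity $\ee^\rho(f\circ\theta)=\ee^{\Phi^*(\rho)}(f)$ and the convergence ${\Phi^*}^{mn}\to\Phi^*_{|1|}$ from Proposition~\ref{prop:cycle operators}, and concludes by \cite[Appendix~B]{BFPP17} applied to $\Pi^m$; no spectral result from the main text is used. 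You instead restrict the $m$-step chain to each cyclic class, apply the uniqueness theorem of \cite{BFPP17} to the restricted measures $\tilde\mu_r$, use the already established quasi-compactness (Theorem~\ref{thm:pur implique quasi-compact}), semisimplicity (Lemma~\ref{lem:semi-simple}) and Lemma~\ref{lem:finite subgroup} to get norm convergence of $\Pi^{mn}$ to the Riesz projector onto $\mathcal F$, and finish with an absorption argument based on $P_{E^\perp}{\Phi^*}^n(\rho)P_{E^\perp}\to 0$. This is legitimate: none of the results you import depend on Lemma~\ref{lem:convergence Pim} (only Lemma~\ref{lem:charactrization of cF} does), so there is no circularity, and your route avoids the estimator machinery while yielding the limit on all of $\sta$ directly; the paper's route has the advantage of never having to verify {\bf (Pur)} and {\bf (Erg)} for the restricted measures.

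Two steps need more than the one-line justifications you give. First, the claim that ``a purification relation for $\tilde\mu_r$ is one for $\mu$'' is not literally true: a projector $\pi\le P_{E_r}$ witnessing a failure of {\bf (Pur)} for $\tilde\mu_r$ satisfies $\pi W_N^*W_N\pi\propto\pi$ only for word lengths $N\in m\nn$, whereas {\bf (Pur)} for $\mu$ concerns all lengths. The gap closes easily: if the relation holds $\mu^{\otimes N}$-a.e.\ at length $N$, integrating out the last matrix using $\int v^*v\,\d\mu(v)=\id$ (the integral of the proportionality constant is finite since $\int\|v\|^2\d\mu(v)<\infty$) gives the relation at length $N-1$, hence at every length $\le N$; as every length lies below some multiple of $m$, {\bf (Pur)} for $\mu$ then applies and forces $\rank\pi=1$. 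Second, {\bf (Erg)} for $\tilde\mu_r$, i.e.\ irreducibility of the corner of $\Phi^m$ on $\mathcal L(E_r)$, is classical but is not stated in Proposition~\ref{prop:cycle operators}; you must extract it, for instance by observing that a proper nonzero subspace $F\subsetneq E_r$ with $W_mF\subset F$ for $\mu^{\otimes m}$-almost every $W_m$ would carry a fixed state of ${\Phi^*}^m$ supported in $F$, contradicting the fact, established inside the proof of Proposition~\ref{prop:cycle operators}, that the fixed-point space of ${\Phi^*}^m$ is spanned by $\rho_1,\dotsc,\rho_m$ and that $\rho_r$ has range equal to $E_r$. With these two patches your argument is complete.
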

\begin{proof}
    From \cite[Proposition~3.5]{BFPP17}, {\bf (Pur)} implies there exist $(\hat y_n)$ an $\outalg$-measurable sequence of random variables taking value in $\sta$ such that there exist $C>0$ and $\lambda\in(0,1)$ such that for any probability measure $\nu$ on $\sta$ and $l,n\in \nn$,
    \begin{align}\label{eq:estimation hatx}
        \ee_\nu(d(\hat x_{m(n+l)},\hat y_{mn}\circ \theta^{ml}))\leq C\lambda^n
    \end{align}
    with $\theta$ the left shift $\theta(v_1,v_2,\dotsc)=(v_2,v_3,\dotsc)$.

    Following Equation~(22) in the proof of \cite[Proposition~3.4]{BFPP17}, for any $\outalg$-measurable bounded function $f$ and any density matrix $\rho\in\mat$,
    $$\ee^\rho(f\circ\theta)=\ee^{\Phi^*(\rho)}(f)$$
    with $\ee^\rho$ the expectation with respect to the probability measure defined, using Kolmogorov extension theorem, by
    $$d\pp^\rho(v_1,v_2,\dotsc)=\tr(\rho v_1^*v_2^*\dotsb v_n^*v_n\dotsb v_2v_1)\d\mu^{\otimes n}(v_1,v_2,\dotsc)$$
    See \cite[\S~2]{BFPP17} for details.

    Then it follows from Equation~\eqref{eq:conv Phim} that
    \begin{align}\label{eq:conv shift}
        \lim_{n\to\infty}\ee^\rho(f\circ\theta^{ml})=\ee^{\Phi^*_{|1|}(\rho)}(f)
    \end{align}
    for any bounded $\outalg$-measurable $f$.

    As in the proof of \cite[Theorem~1.1]{BFPP17}, combining the uniform  convergence in $l$ of Equation~\eqref{eq:estimation hatx} and the convergence in $l$ of Equation~\eqref{eq:conv shift}, for any $f:\sta\to \cc$ continuous,
    $$\lim_{n\to\infty} \ee_{\delta_{\hat x}}(f(\hat x_{mn}))-\sum_{r=1}^m \langle x,M_r x\rangle \ee^{\rho_r}(f(\hat y_{mn}))=0.$$
    From \cite[Appendix~B]{BFPP17} applied to $\Pi^m$,
    $$\lim_{n\to\infty}\ee^{\rho_r}(f(\hat y_{mn}))=\ee_{\nu_r}(f(\hat x))$$
    with $\nu_r$ the unique $\Pi^m$-invariant probability measure such that $\ee_{\nu_r}(\pi_{\hat x})=\rho_r$ with $\pi_{\hat x}$ the orthogonal projection onto $\cc x$. That proves the lemma.
\end{proof}

\paragraph{\bf Acknowledgements} The authors were supported by the ANR project ``ESQuisses", grant number ANR-20-CE47-0014-01 and by the ANR project ``Quantum Trajectories'' grant number ANR-20-CE40-0024-01. C. P. is also supported by the ANR projects Q-COAST ANR-19-CE48-0003. C. P. and T. B. are also supported by the program ``Investissements d'Avenir'' ANR-11-LABX-0040 of the French National Research Agency. 
{The authors would like to thank the anonymous referee who point out \cite[Theorem 3.1]{arendt2000vector} and the related simplification of \Cref{AnalyticLog} proof.}

\bibliographystyle{amsalpha}

\newcommand{\etalchar}[1]{$^{#1}$}
\providecommand{\bysame}{\leavevmode\hbox to3em{\hrulefill}\thinspace}
\providecommand{\MR}{\relax\ifhmode\unskip\space\fi MR }
\providecommand{\MRhref}[2]{%
  \href{http://www.ams.org/mathscinet-getitem?mr=#1}{#2}
}
\providecommand{\href}[2]{#2}

\end{document}